\numberwithin{equation}{section}
\newtheorem{theorem}{Theorem}[section]
\newtheorem{lemma}[theorem]{Lemma}
\newtheorem{proposition}[theorem]{Proposition}
\newtheorem{corollary}[theorem]{Corollary}
\theoremstyle{definition}
\newtheorem{definition}[theorem]{Definition}
\newtheorem{remark}[theorem]{Remark}
\theoremstyle{plain}% default
\def \red{\textcolor{red}}
\def\BB{\ensuremath\mathbf{B}}
\def\CC{\ensuremath{\mathbb{C}}}
\def\NN{\ensuremath{\mathbb{N}}}
\def\RR{\ensuremath{\mathbb{R}}}
\def\ZZ{\ensuremath{\mathbb{Z}}}
\date{}
\begin{document}
	
	\title{Mapping the surgery exact sequence for topological manifolds to analysis}
	
	\author
	{Vito Felice Zenobi}
	\maketitle
	\date{}
	
	\begin{abstract}
		In this paper we prove the existence of a natural mapping from the 
		surgery exact sequence for topological manifolds to the analytic surgery exact sequence of N. Higson and J. Roe.
		This generalizes the fundamental result of Higson and Roe, but in the treatment given by Piazza and Schick, from smooth manifolds to topological manifolds. Crucial to our treatment is the Lipschitz signature operator of Teleman. 
		
		We also give a generalization to the equivariant setting of the product defined by Siegel in his Ph.D. thesis. Geometric applications are given to stability results for rho classes. We also obtain a proof of the
		APS delocalised index theorem on odd dimensional manifolds, both for the spin Dirac operator and
		the signature operator, thus extending to odd dimensions the results of Piazza and Schick.
		Consequently, we are able to discuss the mapping of the surgery sequence in all dimensions.
	\end{abstract}
	
	\smallskip
	\noindent \textbf{Keywords.} K-theory, Surgery Theory, Coarse Geometry, Lipschitz Manifolds
	\tableofcontents

	\section{Introduction}
	
	Let $M$ be a $n$-dimensional topological manifold, with $\Gamma=\pi_1(M)$ and let $\widetilde{M}\to M$ be its universal covering.
	We assume $n$ greater than $5$ and, initially, odd.
	
	In \cite{Sull} Sullivan proves that there always exists a Lipschitz manifold structure on $M$ and that it is unique up to a bi-Lipschitz homeomorphism isotopic to the identity.  
	In \cite{Tel1,Tel2} Teleman studies index theory in  the Lipschitz context and  in \cite{Hil1} Hilsum develops it
	in the framework of unbounded Kasparov theory. In particular there is a signature operator arising from the Lipschitz structure
	and this operator determines a well defined class in the K-homology of $M$.
	
	Thanks to these results it is possible to extend the work by Piazza and Schick \cite{PS2} (that follows the one by Higson and Roe \cite{HigRoeI,HigRoeII,HigRoeIII}) from the smooth to the topological category. Let us recall that in \cite{PS2} Piazza and Schick built a natural transformation
	\[
	\xymatrix{L_{n+1}(\ZZ\Gamma)\ar[r]\ar[d]^{\mathrm{Ind}_\Gamma} & \mathcal{S}(M)\ar[r]\ar[d]^\varrho & \mathcal{N}(M)\ar[r]\ar[d]^\beta & L_{n}(\ZZ\Gamma)\ar[d]^{\mathrm{Ind}_\Gamma} \\
		K_{n+1}(C^*_r(\Gamma))\ar[r] & K_{n+1}(D^*(\widetilde{M})^\Gamma)\ar[r] & K_n(M)\ar[r] & K_{n}(C^*_r(\Gamma))}
	\]
	from the surgery exact sequence for smooth manifolds to the analytic surgery exact sequence of Higson and Roe,
	using tools and methods in coarse index theory.
	
	In this paper we  check that this mapping also
	exists  for the surgery sequence for topological manifolds. To this aim we will use as key tool the Lipschitz structure given by Sullivan theorem \cite{Sull}.
	In particular we prove that the key results by Wahl, Piazza and Schick, have a true abstract and K-theoretical meaning,
	that does not depend on the smooth structure and the pseudodifferential calculus.
	
	One significant difference between the smooth SES and the topological SES is that  the second one is an exact sequence of groups, whereas the first one is not. In this paper we deal with the mapping at the set level:  to prove that the diagram is commutative as a diagram of groups, the main difficulty is that the group structure of the topological structure set is rather quite hard to handle. The following question is wide open:
	\begin{itemize}
		\item
		is the map
		$\varrho\colon \mathcal{S}(M)\to K_{n+1}(D^*(\widetilde{M})^\Gamma)$  a homomorphism of groups?
	\end{itemize} 
	
	A positive answer to this question would have direct consequences to
	Conjecture 3.8 in  \cite{WY}, using the methods in developed in \cite{XY2}.

	In the second part of the paper we give another realization of the group $K_*(D^*(\widetilde{M})^\Gamma)$ in terms of the mapping cone of Kasparov bimodules, in order to generalize to the equivariant setting
	a product formula proved by Siegel in his Ph.D. thesis    \cite{Sieg}.
	This product allows us to  give stability results for $\varrho$-invariants and to prove the delocalized APS index theorem of Piazza and Schick (\cite{PS}) in the odd dimensional case.
	This last result leads to define a natural mapping from the SES to the analytic SES of Higson and Roe when $\dim(M)=n$ is even, in both the smooth and the topological setting. 
	
	With the same method one can extendd the construction in \cite{PS}, about the Stolz exact sequence, to the case of even dimensional manifolds, but this was already proven in \cite{XY}.
	
	We refer the reader to \cite{PS2} for a more detailed overview of the problem in the smooth setting.
	
	\subsection*{Acknowledgements.}  I am very thankful to my advisors Paolo Piazza and Georges Skandalis for their support and teachings. Moreover I am glad to thank Thomas Schick and Rudolf Zeidler for pointing out some imprecisions in the first version of the paper. Finally I am grateful to the anonymous referee for his careful reading of the paper. 
	
	\section{Signature operator on Lipschitz manifolds}
	
	We start recalling  fundamental results on Lipschitz manifolds. For further details we refer to \cite{Tel1,Tel2,Hil1,Sull,TV}.
	
	\begin{definition}
		A Lipschitz atlas on a topological manifold  $M$ is an atlas such that the map $\varphi\circ\psi^{-1}$ is a Lipschitz homeomorphism for any
		two charts $\varphi\colon U\to \RR^n$ and $\psi\colon V\to \RR^n$. 
		By definition a Lipschitz manifold structure on $M$ is a maximal Lipschitz atlas.
	\end{definition}
	
	\begin{theorem}[\cite{Sull}]\label{sull}
		Any topological manifold of dimension $n\neq4$ has a Lipschitz atlas of coordinates.
		For any two such structures $L_1$ and $L_2$, there exists a Lipschitz homeomorphism
		$h\colon L_1\to L_2$ isotopic to the identity.
	\end{theorem}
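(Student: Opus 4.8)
The plan is to follow Sullivan's original argument, which transplants the Kirby--Siebenmann smoothing-theory machinery to the Lipschitz category; its architecture is a handle-by-handle induction that reduces the global assertion about $M$ to a local assertion about $\RR^n$ and then reassembles it.

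First I would dispose of the cases $n\le 3$, where topological manifolds carry unique smooth, hence canonical and essentially unique Lipschitz, structures by the classical work of Rad\'o and Moise, so that the real content is $n\ge 5$. For $n\ge 5$ one invokes the Kirby--Siebenmann handle-decomposition theorem --- this is exactly where the hypothesis $n\ne 4$ enters, since handle decompositions, and with them the entire smoothing-theoretic apparatus, are unavailable in dimension $4$. One then runs the standard obstruction-theoretic scheme: a Lipschitz structure is built on a handlebody by extending over one handle at a time, the obstructions being controlled by the homotopy groups of a ``structure space'' comparing $\mathrm{TOP}$ with $\mathrm{Lip}$; applied relatively, the same scheme shows that two structures agreeing on a codimension-$0$ submanifold are concordant, and one upgrades concordance to isotopy by a Lipschitz product-structure theorem (the $\mathrm{Lip}$ analogue of the Kirby--Siebenmann product structure theorem, proved by the same torus-trick argument).

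The crux --- and the step I expect to be the main obstacle --- is the local analytic input: the space of germs of Lipschitz structures at a point of $\RR^n$ must be shown contractible, equivalently the homotopy fiber $\mathrm{TOP}(n)/\mathrm{Lip}(n)$ must be highly connected, in sharp contrast with $\mathrm{TOP}/\mathrm{PL}\simeq K(\ZZ/2,3)$, whose nontriviality is what obstructs triangulability. Proving this requires genuine geometric analysis --- deformation theory of Lipschitz and quasiconformal homeomorphisms of $\RR^n$, in the spirit of Sullivan's use of hyperbolic geometry together with the Tukia--V\"ais\"al\"a extension and approximation results --- and it is precisely this input that makes the Lipschitz category flexible enough for all obstructions to die yet rigid enough to support a structure theory at all.

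With the local result in hand the packaging is routine: existence holds because every obstruction vanishes through the dimension of $M$, and uniqueness up to isotopy follows from the relative statement together with the product-structure theorem, yielding the asserted Lipschitz homeomorphism $h\colon L_1\to L_2$ isotopic to the identity. Since this theorem is due to Sullivan and is used here only as a black box, in practice I would simply cite \cite{Sull} (and \cite{TV} for the analytic groundwork) rather than reproduce the proof.
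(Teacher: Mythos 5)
The paper gives no proof of this statement: it is quoted directly from \cite{Sull} and used as a black box, which is exactly what you conclude you would do in practice. Your sketch of Sullivan's argument (classical smoothing for $n\le 3$, Kirby--Siebenmann handle theory and the product-structure/torus-trick machinery for $n\ge 5$ with $n=4$ excluded, and the hyperbolic-geometric deformation results showing the local Lipschitz structure space is contractible, as developed further in \cite{TV}) is an accurate account of how the cited proof actually goes, so there is nothing to correct.
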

	
	\begin{theorem}[\cite{Tel1,Hil1}]\label{lip}
		Let $M$ be a closed oriented
		Lipschitz manifold of even dimension. Then from the complex of $L^2$-differential
		forms on $M$ (with respect to some choice of a Lipschitz Riemannian metric
		$g$) one obtains a signature operator $D_g$ which is closed and self-adjoint. Therefore $D_g$  
		determines a class $[D]$ in $K_0(M)\simeq KK(C(M),\CC)$ which is independent of the choice of the metric $g$.
		The image of $[D]$ in $K_0(pt)\simeq KK(\CC,\CC)$ (i.e., the index of $D_g$) is the usual signature of the manifold.
	\end{theorem}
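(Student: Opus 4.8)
The plan is to exhibit $(C(M),\mathcal{H},D_g,\tau)$ as an even unbounded Fredholm module (a spectral triple) over $C(M)$ and then to identify its index. Write $n=2\ell$. A Lipschitz Riemannian metric $g$ means, via a finite Lipschitz atlas and a subordinate Lipschitz partition of unity, a measurable field of inner products on the a.e.-defined tangent spaces which is uniformly bounded and uniformly elliptic; this produces a Hilbert space $\mathcal{H}=\bigoplus_{p=0}^{n}L^2\Omega^p(M,g)$ of $L^2$ differential forms together with a Hodge star operator $\ast$. Since a Lipschitz function has, by Rademacher's theorem, an essentially bounded differential, the exterior derivative $d$ is well defined in the distributional sense on forms with Lipschitz coefficients and extends to a closed densely defined operator on $\mathcal{H}$, with Hilbert-space adjoint $d^\ast=\pm\ast d\ast$; set $D_g=d+d^\ast$. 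In even dimension the graded Hodge star $\tau$ (equal on $\Omega^p$ to $\ast$ times the standard power of $i$, chosen so that $\tau^2=1$) anticommutes with $D_g$ and makes $\mathcal{H}$ a $\ZZ/2$-graded Hilbert space with $D_g$ odd; the restriction $D_g^+\colon\mathcal{H}^+\to\mathcal{H}^-$ is the signature operator. The first point to settle is that $D_g$ is self-adjoint on its natural domain $\mathrm{dom}(d)\cap\mathrm{dom}(d^\ast)$: since $M$ is closed this is the Lipschitz counterpart of Gaffney's theorem, proved by a cut-off and mollification argument showing that the minimal and maximal closed extensions of $d$ coincide.

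Next I would verify the two axioms of a spectral triple. Boundedness of commutators is elementary: for a Lipschitz function $f$ on $M$ one has $[D_g,f]=c(df)$, exterior-plus-interior (Clifford) multiplication by the a.e.-defined one-form $df\in L^\infty$, which is a bounded operator on $\mathcal{H}$; Lipschitz functions are dense in $C(M)$, so this gives a dense subalgebra with bounded commutators. The hard part, and the main obstacle, is to prove that $(1+D_g^2)^{-1}$ is a \emph{compact} operator, since in the Lipschitz setting there is no pseudodifferential calculus and no elliptic regularity for the system $d+d^\ast$ with merely $L^\infty$ coefficients. My plan is to localize: using the finite atlas and the partition of unity, reduce the statement to the assertion that on a bounded open subset of $\RR^n$ equipped with a bounded measurable elliptic metric, the intersection of the domain of the distributional exterior derivative and of the distributional coderivative, with the appropriate natural boundary conditions, embeds compactly into $L^2$. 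This is exactly a Weck--Picard-type selection theorem (the compactness of the embedding $H(\mathrm{curl})\cap H(\mathrm{div})\hookrightarrow L^2$ underlying Maxwell theory), which is known to survive under bounded measurable elliptic coefficients on Lipschitz domains; patching the local compact embeddings then yields compactness of the resolvent of $D_g$ on $M$. Alternatively one follows Teleman's original route, freezing the coefficients of $g$ on small charts and combining the flat Rellich lemma with the ellipticity bounds. Together with the previous paragraph this shows $(C(M),\mathcal{H},D_g,\tau)$ is an even spectral triple, hence defines a class $[D]\in K_0(M)\simeq KK(C(M),\CC)$.

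Then I would establish independence of the choices. Two Lipschitz metrics $g_0,g_1$ are joined by the affine path $g_t=(1-t)g_0+tg_1$ of Lipschitz metrics; the identity on forms is a bounded isomorphism $\mathcal{H}(g_0)\to\mathcal{H}(g_t)$ with bounded inverse (by uniform ellipticity) intertwining $d$, so conjugating produces a norm-resolvent-continuous family of spectral triples on a fixed graded Hilbert space, whence $[D]$ is independent of $g$. Finally, given two Lipschitz structures $L_1,L_2$ on $M$, Theorem~\ref{sull} furnishes a Lipschitz homeomorphism $h\colon L_1\to L_2$ isotopic to the identity; pullback of forms along $h$ identifies the two spectral triples up to unitary equivalence after a metric change, and since $h$ is homotopic to $\mathrm{id}_M$ the induced self-map of $K_0(M)$ is the identity, so the two classes coincide.

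It remains to compute the index of $D_g^+$, i.e.\ the image of $[D]$ in $K_0(\mathrm{pt})\simeq\ZZ$. Compactness of the resolvent gives the $L^2$ Hodge decomposition $\mathcal{H}=\mathcal{H}_{\mathrm{harm}}\oplus\overline{\mathrm{im}\,d}\oplus\overline{\mathrm{im}\,d^\ast}$ with $\mathcal{H}_{\mathrm{harm}}=\ker D_g=\ker(d+d^\ast)$ finite dimensional, and the standard argument (closedness of the range of $d$, density of Lipschitz forms) identifies $\mathcal{H}_{\mathrm{harm}}^p$ with the de Rham cohomology $H^p(M;\RR)$, which by Lipschitz de Rham theory is the ordinary real cohomology of $M$. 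Under this identification $\tau$ becomes, up to the standard powers of $i$, the Poincar\'e-duality/cup-product operator on $H^*(M;\RR)$, so $\mathrm{ind}(D_g^+)=\dim\mathcal{H}_{\mathrm{harm}}^+-\dim\mathcal{H}_{\mathrm{harm}}^-$ equals the signature of the intersection form of $M$, i.e.\ $\mathrm{sign}(M)$. This last step is the classical Hodge-theoretic computation of the signature, which goes through verbatim once the decomposition above is in place.
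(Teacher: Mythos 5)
This statement is not proved in the paper at all: it is imported verbatim from Teleman and Hilsum (\cite{Tel1,Hil1}) and used as a black box, so there is no internal proof to compare against. Judged on its own, your reconstruction follows the standard Teleman--Hilsum strategy and correctly isolates the one genuinely hard analytic point, namely compactness of $(1+D_g^2)^{-1}$ in the absence of elliptic regularity for $L^\infty$ coefficients; both routes you offer for it (Teleman's coefficient-freezing plus the flat Rellich lemma, or the Weck--Picard compact embedding $H(\mathrm{curl})\cap H(\mathrm{div})\hookrightarrow L^2$ for measurable uniformly elliptic coefficients, in its all-degrees form) are legitimate, and the localization via a Lipschitz partition of unity patches correctly because $d\varphi\in L^\infty$. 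The Hodge-theoretic identification of the index with the signature is also fine, modulo the $L^2$-de Rham theorem for Lipschitz manifolds, which you correctly flag as an input.

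The one step I would not accept as written is the metric-independence argument. If $\langle u,v\rangle_t=\langle A_tu,v\rangle_0$ with $A_t$ the fiberwise comparison operator of the two metrics, then $A_t$ (and $A_t^{1/2}$) is only a bounded \emph{measurable} bundle endomorphism; conjugating by it to land on a fixed Hilbert space does not preserve $\mathrm{dom}(d)$, and $\mathrm{dom}(d^*_{g_t})=\{u: A_tu\in\mathrm{dom}(d^*_{g_0})\}$ genuinely varies with $t$. So the claim that the affine path yields a norm-resolvent-continuous family of self-adjoint operators on a fixed graded Hilbert space is not immediate and needs an argument; as stated it could fail. The standard repair is to pass to the bounded transform and prove an operator homotopy of Kasparov $(C(M),\CC)$-modules (homotopy invariance of $KK$ then does the rest), or to follow Hilsum's route of comparing the two completions of the module of Lipschitz forms under uniformly equivalent norms --- this is exactly his Lemma 5.1, which the present paper itself invokes later for a related purpose. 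Separately, note that the last paragraph of your independence discussion (invariance under change of Lipschitz \emph{structure} via Sullivan's theorem) is not part of the statement you were asked to prove; it is handled later in the paper (Proposition \ref{dep}).
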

	
	In \cite{Hil2} Hilsum proves that the signature operator gives a KK-class as above  in the case of non compact manifolds too, 
	provided the manifold $M$ is endowed with a metric $g$
	such that  it is metrically complete with respect to the induced structure of metric space.
	Moreover he showed a result on the finite propagation speed for  solutions of the wave equation.
	
	\begin{theorem}[Hilsum]\label{Hil}
		Let $M$ be an oriented Lipschitz manifold with a Riemannian structure, such that the manifold is complete as metric space.
		Let $\mbox{d}$ be the associated distance function and let $D$ be the associated signature operator.
		For all $t\in \RR$, we have that:
		\[
		\mathrm{supp}(e^{itD})\subset\{(x,y)\in M\times M\, |\, d(x,y)\leq t\}.
		\]
		For $f\in\mathcal{S}(\RR)$ such that $\mathrm{supp}(\hat{f})\subset[-a,a]$, with $a>0$, we have that:
		\[
		\mathrm{supp}(f(D))\subset\{(x,y)\in M\times M\, |\, d(x,y)\leq a\}.
		\]
	\end{theorem}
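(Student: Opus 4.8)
The plan is to reduce the theorem to the classical energy estimate for first--order symmetric hyperbolic evolutions; the only ingredient specific to the Lipschitz setting is a commutator bound expressing that the signature operator has propagation speed $\le 1$. By the results recalled above (Theorem~\ref{lip} and, for the complete non--compact case, \cite{Hil2}) the operator $D=d+d^{*}$ on the $L^{2}$--de Rham complex of $(M,g)$ is self--adjoint, so $U(t):=e^{itD}$ is a strongly continuous one--parameter group of unitaries and, for $\xi\in\mathrm{Dom}(D)$, $u(t):=U(t)\xi$ is a $C^{1}$ curve in $L^{2}(M)$ with $u(t)\in\mathrm{Dom}(D)$ and $\dot u=iDu$. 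Recall that for a bounded operator $T$ on $L^{2}(M)$ the condition $\mathrm{supp}(T)\subset\{(x,y):d(x,y)\le c\}$ means precisely that $\varphi\,T\,\psi=0$ for all bounded functions $\varphi,\psi$ with $d(\mathrm{supp}\,\varphi,\mathrm{supp}\,\psi)>c$. Since $\hat f$ is continuous and supported in $[-a,a]$, Fourier inversion gives $f(D)=\tfrac1{2\pi}\int_{-a}^{a}\hat f(t)\,e^{itD}\,dt$ as a norm--convergent Bochner integral ($\|e^{itD}\|=1$), so the statement for $f(D)$ follows from the one for $e^{itD}$ with $|t|\le a$; and by symmetry ($e^{-itD}=(e^{itD})^{*}$) everything reduces to: if $\xi\in L^{2}(M)$ is supported in a closed set $C$ and $t\ge 0$, then $U(t)\xi$ is supported in $N_{t}(C):=\{x:d(x,C)\le t\}$.

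The single fact I need from the Lipschitz theory is the following commutator estimate, and I expect its verification to be the main obstacle. For every function $\varphi\colon M\to\CC$ that is Lipschitz for the Riemannian distance $d$ one has $\varphi\cdot\mathrm{Dom}(D)\subset\mathrm{Dom}(D)$ and $[D,\varphi]$ extends to a bounded operator, namely (Clifford--type) multiplication by the almost everywhere defined, bounded $1$--form $d\varphi$, with
\[
\|[D,\varphi]\|\ \le\ \big\|\,|d\varphi|_{g}\,\big\|_{\infty}\ \le\ \mathrm{Lip}_{d}(\varphi).
\]
This is where the Lipschitz (rather than smooth) hypothesis enters: one uses the Teleman--Hilsum realization of $D=d+d^{*}$ on the $L^{2}$--de Rham complex of $(M,g)$ with $g,g^{-1}\in L^{\infty}$, the weak Leibniz rule $d(\varphi\omega)=d\varphi\wedge\omega+\varphi\,d\omega$ for $\varphi\in W^{1,\infty}_{\mathrm{loc}}$ and $\omega\in\mathrm{Dom}(d)$ (which both shows $\varphi\omega\in\mathrm{Dom}(d)$ and identifies $[d,\varphi]$ with exterior multiplication by $d\varphi$), the identification of $[d^{*},\varphi]=-[d,\bar\varphi]^{*}$ with $\mp$ interior multiplication by $(d\varphi)^{\sharp}$, and Rademacher's theorem in charts together with the boundedness of $g,g^{-1}$ to get $d\varphi\in L^{\infty}$ and the pointwise bound $|d\varphi|_{g}\le\mathrm{Lip}_{d}(\varphi)$ a.e. Everything that follows is insensitive to the smooth structure and to pseudodifferential calculus and uses only this estimate, so I may simply quote it from \cite{Tel1,Hil1,Hil2}.

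Now the energy estimate. Fix $y\in M$, $r>0$, $t>0$, and suppose first that $\xi\in\mathrm{Dom}(D)$ vanishes on $B(y,r+t)$. Choose $\chi\in C^{\infty}(\RR)$ with $\chi\equiv1$ on $(-\infty,0]$, $\chi\equiv0$ on $[1,\infty)$ and $\chi'\le0$, fix $\epsilon\in(0,r)$, and for $s\in[0,t]$ set
\[
\psi_{s}(x)=\chi\!\left(\frac{d(x,y)-(r+t-s)+\epsilon}{\epsilon}\right),
\]
a Lipschitz function equal to $1$ on $B(y,r+t-s-\epsilon)$ and to $0$ off $B(y,r+t-s)$. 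Put $E(s):=\|\psi_{s}u(s)\|^{2}\ge0$; then $E(0)=0$ because $\xi$ vanishes on $\mathrm{supp}\,\psi_{0}\subset B(y,r+t)$. Differentiating and using $\dot u=iDu$, self--adjointness of $D$, and $[D,\psi_{s}^{2}]=2\psi_{s}\,\mathrm{Cliff}(d\psi_{s})$, one obtains
\[
E'(s)=\big\langle(\partial_{s}\psi_{s}^{2})u(s),u(s)\big\rangle-2i\big\langle\mathrm{Cliff}(d\psi_{s})u(s),\psi_{s}u(s)\big\rangle .
\]
Since $d(\cdot,y)$ is $1$--Lipschitz, $|d\psi_{s}|\le\tfrac1\epsilon|\chi'|$ a.e.; since the radius $r+t-s$ moves at unit speed, $\partial_{s}\psi_{s}^{2}=\tfrac2\epsilon\chi\chi'\le0$ with $|\partial_{s}\psi_{s}^{2}|=\tfrac2\epsilon|\chi'|\chi$. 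Hence the modulus of the second term is at most $\tfrac2\epsilon\int|\chi'|\chi\,|u(s)|^{2}=-\langle(\partial_{s}\psi_{s}^{2})u(s),u(s)\rangle$, so the two terms cancel and $E'(s)\le0$. With $E(0)=0$ and $E\ge0$ this forces $E\equiv0$; taking $s=t$ gives $u(t)=0$ on $B(y,r-\epsilon)$, and letting $\epsilon\downarrow0$ gives $u(t)=0$ on $B(y,r)$. Varying $y$ and $r$ shows that $U(t)\xi$ is supported in $N_{t}(C)$ for every $\xi\in\mathrm{Dom}(D)$ supported in a closed set $C$.

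Finally, for general $\xi\in L^{2}(M)$ supported in a closed set $C$, pick $\eta_{k}\in\mathrm{Dom}(D)$ with $\eta_{k}\to\xi$ and, for $\delta>0$, a Lipschitz function $\rho_{\delta}$ equal to $1$ on $C$ and supported in $N_{\delta}(C)$; then $\rho_{\delta}\eta_{k}\in\mathrm{Dom}(D)$ is supported in $\overline{N_{\delta}(C)}$ and $\rho_{\delta}\eta_{k}\to\xi$ in $L^{2}$, so by the previous step and strong continuity of $U(t)$ the vector $U(t)\xi$ is supported in $N_{t}\big(\overline{N_{\delta}(C)}\big)\subset\overline{N_{t+\delta}(C)}$; letting $\delta\downarrow0$ yields $\mathrm{supp}\big(U(t)\xi\big)\subset N_{t}(C)$. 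Together with the reductions of the first paragraph this proves both displayed support inclusions.
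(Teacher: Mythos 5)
The paper does not actually prove this statement: it is imported verbatim from Hilsum \cite{Hil2} as an external input, so there is no internal argument to compare yours against. Your proof is correct and is essentially the standard Chernoff/Higson--Roe energy estimate for Dirac-type operators, which is also the route Hilsum follows. The reduction of $f(D)$ to $e^{itD}$ via the norm-convergent integral $f(D)=\tfrac1{2\pi}\int_{-a}^{a}\hat f(t)e^{itD}\,dt$ is standard; the differentiation of $E(s)=\|\psi_s u(s)\|^2$, the identity $[D,\psi_s^2]=2\psi_s\,\mathrm{Cliff}(d\psi_s)$, and the cancellation of the Clifford term against $\langle(\partial_s\psi_s^2)u,u\rangle$ are carried out correctly; and the two limiting steps ($\epsilon\downarrow0$, then density of $\mathrm{Dom}(D)$ with the cutoff $\rho_\delta$) are handled properly. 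You also correctly isolate the one genuinely Lipschitz-specific ingredient, namely that a $d$-Lipschitz $\varphi$ preserves $\mathrm{Dom}(D)$ with $\|[D,\varphi]\|\le\mathrm{Lip}_d(\varphi)$; this is exactly what must be quoted from \cite{Tel1,Hil1,Hil2}, and your sketch (weak Leibniz rule, Rademacher in charts, $|d\varphi|_g\le\mathrm{Lip}_d(\varphi)$ a.e.) is the right one, though the last inequality deserves a word of justification since $d$ is only the length metric of an $L^\infty$ Riemannian tensor. Two minor points worth making explicit: completeness enters precisely to guarantee self-adjointness of $D$ (so that $e^{itD}$ exists and the integration by parts in $E'(s)$ is legitimate), and the signature operator used in the paper in odd dimensions is $\tau d+d\tau=\pm\tau(d-d^*)$ rather than $d+d^*$, so you should note that its symbol still has norm $|\xi|$ (as $\tau$ is a pointwise unitary), giving propagation speed exactly $1$ rather than $2$.
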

	This theorem will be key in the coarse geometrical setting.
	
	\section{The $\varrho$ classes}

	We refer the reader to  \cite[sect.1]{HigRoeIII} and  \cite[sect.1]{PS} 
	for notations about coarse geometry and coarse algebras.
	
	Let $N$ be an oriented topological manifold of dimension $n\geq5$; an element of the   topological structure set  of $N$ is given by an orientation preserving homotopy 
	equivalence  $f\colon M\to N$. Two  different homotopy equivalences $f\colon M\to N$ and 
	$f'\colon M'\to N$ are equivalent if there is a $h$-cobordism $W$ between them and 
	a homotopy equivalence $F\colon W \to N\times[0,1]$, such that $F_{|M}=f$ and $F_{|M'}=f'$.
	\begin{definition}
		The topological structure set $\mathcal{S}^{TOP}(N)$ of $N$ is defined as  
		the set of the $h$-cobordism classes of oriented homotopy equivalences.
	\end{definition}

	Given a class $[f\colon M\to N]$, we set $Z=M\cup-N$. Let $\Gamma$ be the fundamental group of $N$.
	The universal covering $\widetilde{N}\to N$ is induced by a map $u\colon N\to B\Gamma$, namely $\widetilde{N}=u^*(E\Gamma)$,
	where $B\Gamma$ is the classifying space of $\Gamma$ and $E\Gamma$ is its universal covering.
	Let $\widetilde{M}$ be the $\Gamma$-Galois covering induced by $u_M:=u\circ f$, then we get a $\Gamma$-Galois covering 
	$\widetilde{Z}=\widetilde{M}\cup-\widetilde{N}$ on $Z$.
	Let  $\mathcal{F}=\widetilde{Z}\times_\Gamma C^*_r(\Gamma)$ be the associated  Mischenko bundle.

	Now, starting from a Lipschitz structure on $Z$ given by Theorem \ref{sull},
	consider the $L^2$-forms complex $L^2(Z,\Lambda_\CC(Z))$, see  \cite[Section 2]{Hil1}.
	
	We get a differential $d_Z$ and
	an involution $\tau_Z$;  $\tau_Z$ is the operator  
	$\omega\mapsto i^{p(p-1)+\frac{n}{2}}*\omega$ on  forms of degree $p$.
	Like in the classical Hodge theory we can define the Lipschitz signature operator (with coefficients) as 
	\[
	D_Z=(d_Z-\tau_Z d_Z \tau_Z)
	\]
	if $n$ is even and
	\[
	D_Z=(\tau_Z d_Z+d_Z \tau_Z)
	\]
	if $n$ is odd.
	
	Like in \cite{Hil1}, we have that $\left(L^2(Z,\Lambda_\CC(Z)),\mu,  D_Z\right)$ defines  
	an unbounded class 
	$[D_Z]\in KK(C(Z),\CC)$,
	where $\mu$ is the representation that associates the multiplication operator $\mu_f$ to a function $f$.

	\subsection{The perturbed signature operator}
	
	We want to associate a class in the K-group $K_*(D^*(\tilde{M})^\Gamma)$ to a homotopy equivalence $f\colon ~M\to ~N$  and show that this mapping is well defined
	on the $h$-cobordism classes.
	
	The key result for what follows is the homotopy invariance of the index class of the signature
	operator for compact oriented smooth manifolds, proved by M. Hilsum and G. Skandalis in \cite{HilSk}.
	Remember that, in the equivariant setting, this class is given by
	\[
	\mathrm{Ind}_\Gamma(D_Z)=[\mathcal{F}]\otimes_{C(X)\otimes C^*_r(\Gamma)}[D_Z]\in KK(\CC,C_r^*(\Gamma)),
	\]
	where $[\mathcal{F}]$ is the class of Mishchenko bundle in $KK(\CC,C(Z)\otimes C^*_r(\Gamma))$.
	
	\begin{theorem}[Hilsum-Skandalis]\label{hs}
		Let $f\colon M\to N$ be a homotopy equivalence.
		Then the class 
		$\mathrm{Ind}_\Gamma(D_Z)\in KK(\CC,C^*_r(\Gamma))$
		vanishes.
	\end{theorem}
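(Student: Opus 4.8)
The plan is to reduce Theorem~\ref{hs} to the smooth Hilsum--Skandalis theorem of \cite{HilSk} by exploiting Sullivan's theorem and the fact that, by Teleman's and Hilsum's work (Theorems~\ref{lip} and~\ref{Hil}), the analytic signature class $[D_Z]\in KK(C(Z),\CC)$ and its twisted index class $\mathrm{Ind}_\Gamma(D_Z)=[\mathcal{F}]\otimes[D_Z]$ depend only on the \emph{Lipschitz} structure of $Z$, not on any smooth structure. First I would observe that the construction of $D_Z$ is functorial under bi-Lipschitz homeomorphisms: if $\phi\colon Z'\to Z$ is a bi-Lipschitz homeomorphism then, after transporting the metric, $\phi$ intertwines the two $L^2$-de Rham complexes and hence $\phi^*[D_Z]=[D_{Z'}]$ in $KK$; the same holds $\Gamma$-equivariantly for the Mishchenko-twisted classes, so $\mathrm{Ind}_\Gamma(D_Z)=\mathrm{Ind}_\Gamma(D_{Z'})$. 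This already shows the index class is an invariant of the Lipschitz manifold $Z=M\cup -N$ with its $\Gamma$-cover.

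The next step is the key geometric input. The manifolds $M$ and $N$ are homotopy equivalent via $f$, but a priori only topological. However $Z$ is a closed topological manifold of dimension $n\ge 5$ (here $n\neq 4$, so $\dim Z = n$ avoids the bad dimension), so by Theorem~\ref{sull} it carries a Lipschitz structure. One then compares the signature index class over $Z$ with the difference of the signature index classes over $M$ and over $-N$. Concretely, $Z$ is the disjoint union $M\sqcup(-N)$ as far as the $L^2$-forms complex is concerned (the $L^2$ de Rham complex of a disjoint union splits), so $[D_Z]=[D_M]-[D_N]$ under the orientation reversal on the second summand, and therefore
\[
\mathrm{Ind}_\Gamma(D_Z)=[\mathcal{F}_M]\otimes[D_M]-[\mathcal{F}_N]\otimes[D_N]\in KK(\CC,C^*_r(\Gamma)).
\]
The heart of the matter is then the homotopy invariance: since $f\colon M\to N$ is an orientation-preserving homotopy equivalence compatible with the maps to $B\Gamma$ (i.e. $u_M=u\circ f$), the two twisted signature index classes $[\mathcal{F}_M]\otimes[D_M]$ and $[\mathcal{F}_N]\otimes[D_N]$ agree, whence $\mathrm{Ind}_\Gamma(D_Z)=0$. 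This is precisely the assertion of \cite{HilSk} in the smooth case, and the content of this theorem is that it persists in the Lipschitz category.

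The main obstacle is that the Hilsum--Skandalis argument for homotopy invariance of the signature index class is carried out, in \cite{HilSk}, using the smooth structure and a careful analysis of the analytic properties of the signature operator (bounded transform, regularity of the relevant bimodules, the explicit "measured" homotopy constructed from a chain homotopy equivalence). One must check that every analytic ingredient of that proof is available in the Lipschitz setting: that $L^2$-forms on a Lipschitz manifold form a module for which the signature operator is self-adjoint with compact resolvent after twisting (Theorem~\ref{lip} and Hilsum \cite{Hil1,Hil2}), that finite-propagation estimates hold (Theorem~\ref{Hil}), and that a homotopy equivalence $f$ induces a bounded, bi-continuous chain map on $L^2$-forms with the Poincar\'e-duality compatibility used in \cite{HilSk}. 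The cleanest route is to invoke Hilsum's own Lipschitz refinement in \cite{Hil1,Hil2}, which was designed exactly so that the Hilsum--Skandalis proof goes through verbatim; thus the "proof" largely consists in assembling these citations and verifying the bookkeeping of orientations and of the Mishchenko bundle over the two pieces of $Z$. I would therefore present the argument as: (i) Lipschitz invariance of $\mathrm{Ind}_\Gamma(D_Z)$; (ii) the splitting $[D_Z]=[D_M]-[D_N]$; (iii) citation of the Lipschitz Hilsum--Skandalis homotopy invariance to conclude the difference vanishes.
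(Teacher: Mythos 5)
Your proposal is correct and follows essentially the same route as the paper: the paper proves Theorem~\ref{hs} simply by citing \cite{HilSk} together with the remark on p.~95 there, where Hilsum and Skandalis themselves observe that their homotopy-invariance argument applies in the Lipschitz category once one checks the objects need not be smooth---which is exactly your step (iii). The only caution is that your opening phrase about ``reducing to the smooth Hilsum--Skandalis theorem'' cannot be meant literally, since Sullivan provides a Lipschitz (not smooth) structure and $M$, $N$ need not be smoothable; but your final assembly (i)--(iii) correctly lands on the Lipschitz version of the homotopy-invariance argument rather than on an actual smooth reduction.
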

	
	In remark \cite[p.95]{HilSk} the authors observe that all  arguments can be applied to the Lipschitz case: we can easily check that the objects do not need be smooth. 
	
	\begin{remark}\label{pert}
		Of particular interest to us is a byproduct of the proof of Theorem 3.1,
		namely the construction of a homotopy $\mathcal{D}_\alpha$ between the signature operator $\mathcal{D}_Z=\mathcal{D}_0$ and an invertible operator $\mathcal{D}_1$.
		This is the reason why the index class $\mathrm{Ind}_\Gamma (D_Z)$ vanishes.
		Here $\mathcal{D}_Z$ is the signature operator twisted by the Mishchenko bundle.
		Moreover we point out that the perturbation creates a gap in its  spectrum near $0$.
	\end{remark}
	
	\begin{proposition}
		The difference $\mathcal{D}_0-	\mathcal{D}_1$ is a C*-module compact operator on 
		$L^2(Z,\Lambda_\CC(Z)\otimes~\mathcal{F})$ both in the smooth and in the Lipschitz case. 
	\end{proposition}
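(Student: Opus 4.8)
The plan is to extract from the Hilsum--Skandalis homotopy of Remark~\ref{pert} the single extra feature one needs here: that the perturbation of the signature operator is, up to compacts, genuinely regularizing. Write $H:=L^2\big(Z,\Lambda_\CC(Z)\otimes\mathcal{F}\big)$ for the Hilbert $C^*_r(\Gamma)$-module carrying $\mathcal{D}_0$ and $\mathcal{D}_1$, and $\mathbb{K}(H)$ for the ideal of $C^*$-module compact operators, i.e.\ the norm-closure of the finite-rank operators $\zeta\mapsto\xi\langle\eta,\zeta\rangle$. By Remark~\ref{pert} the operator $C:=\mathcal{D}_1-\mathcal{D}_0$ is bounded and (formally) self-adjoint, so $\mathcal{D}_1$ shares the domain of $\mathcal{D}_0$; the content of the statement is the upgrade from boundedness of $C$ to $C\in\mathbb{K}(H)$.

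The first ingredient is that $\chi(\mathcal{D}_0)\in\mathbb{K}(H)$ for every $\chi\in C_0(\RR)$; in particular $(\mathcal{D}_0+i)^{-1}\in\mathbb{K}(H)$ and the spectral projection $\chi_{(-\varepsilon,\varepsilon)}(\mathcal{D}_0)\in\mathbb{K}(H)$. In the smooth case this is the usual elliptic estimate plus Rellich, twisted by the flat bundle $\mathcal{F}$. In the Lipschitz case it is exactly the twisted content of Theorem~\ref{lip}: since $Z$ is compact, the fact that the Lipschitz signature operator $D_Z$ defines a class in $K_*(Z)\simeq KK(C(Z),\CC)$ means $(D_Z+i)^{-1}$ is compact, and the Kasparov product with the Mishchenko bundle $[\mathcal{F}]$ --- i.e.\ passing to $\mathcal{D}_0$ on $H$ --- preserves this, Hilsum's finite-propagation Theorem~\ref{Hil} guaranteeing that the functional calculus of $\mathcal{D}_0$ behaves as in the smooth case.

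The second, decisive ingredient is that $C$ maps $H$ into $\mathrm{dom}(\mathcal{D}_0)$ with $\mathcal{D}_0C$ bounded --- equivalently $C=(\mathcal{D}_0+i)^{-1}R$ with $R:=(\mathcal{D}_0+i)C$ bounded --- or, more robustly, that $C$ equals $\chi_{(-\varepsilon,\varepsilon)}(\mathcal{D}_0)\,C'\,\chi_{(-\varepsilon,\varepsilon)}(\mathcal{D}_0)$ modulo $\mathbb{K}(H)$ with $C'$ bounded. In the smooth setting this is immediate: the Hilsum--Skandalis perturbation is realized as a smoothing (order $-\infty$) pseudodifferential operator with coefficients in $\mathcal{F}$, of finite propagation. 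In the Lipschitz setting, where no pseudodifferential calculus is available, the plan is to run through the Hilsum--Skandalis construction --- adapted as the authors indicate on p.~95 of \cite{HilSk} --- and check that every operator it uses is of one of two kinds: a bounded operator of finite propagation (cut-off functions, and the maps on $L^2$-forms induced by the homotopy equivalence after a Lipschitz regularisation), or a regularizing operator produced from the functional calculus of $\mathcal{D}_0$ (chain-homotopy operators, parametrices such as $\mathcal{D}_0(\mathcal{D}_0^2+1)^{-1}$), the latter being legitimate and of finite propagation by Hilsum's Theorem~\ref{Hil}; and that they are assembled so that a regularizing (hence, over the compact $Z$, $C^*$-module compact) factor survives in the final formula for $C$. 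The hard part will be exactly this bookkeeping: confirming that no Hilsum--Skandalis step secretly uses smoothness, that the chain homotopy realizing the relation ``homotopy inverse composed with the map is chain-homotopic to the identity'' can be kept inside the bounded-operator framework, and that it composes with the parametrices so as not to destroy the regularizing factor --- here Teleman's and Hilsum's analysis of the Lipschitz de Rham complex, together with Theorem~\ref{Hil}, does the work. Once this is in hand, $C$ is a $C^*$-module compact operator composed with a bounded one, so $C\in\mathbb{K}(H)$, and therefore $\mathcal{D}_0-\mathcal{D}_1=-C$ is $C^*$-module compact on $L^2\big(Z,\Lambda_\CC(Z)\otimes\mathcal{F}\big)$, in both the smooth and the Lipschitz case.
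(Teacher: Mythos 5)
Your overall strategy---reduce everything to showing that the Hilsum--Skandalis perturbation $C=\mathcal{D}_1-\mathcal{D}_0$ is regularizing, and combine this with the compactness of the resolvent of the Lipschitz signature operator (which the paper also takes from \cite[Proposition 5.6]{Hil1})---is the right one, and your first ingredient matches the paper. The gap is in your second ingredient, precisely at the point you label ``bookkeeping'' and defer. It is not bookkeeping: the Hilsum--Skandalis construction \emph{does} secretly use smoothness, in the one operator that carries all the geometry, namely the averaged pullback $T_{p,v}\colon\xi\mapsto q_*(\omega\wedge p^*\xi)$. In the smooth case its Schwartz kernel is the smooth form $t_*\omega$ on $M\times N$, which is why $T_{p,v}$ is smoothing and why the explicit formulas $L_t=1+H_t$, $U_t=1+(\text{smoothing})$, $S_t=\tau_Z+(\text{smoothing})$ force $\mathcal{D}_t-\mathcal{D}_0$ to be compact. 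In the Lipschitz case that kernel is merely Lipschitz, and no amount of checking the remaining steps will produce a regularizing factor out of $T_{p,v}$ itself. So one cannot ``run through the construction and verify''; one must \emph{replace} the operator.

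The paper's fix, which your proposal does not contain, is to choose a rapidly decreasing even function $\psi$ with $\psi(0)=1$, note that $\psi(d_N+d_N^*)$ is Hilbert--Schmidt (compact resolvent again), and define a new operator $T_f$ with kernel $\bar q_*(\omega\wedge\bar p^*k)$, where $k$ is the kernel of $\psi(d_N+d_N^*)$. This $T_f$ is regularizing by construction, but it is a \emph{different} operator, so one must re-verify that it satisfies the hypotheses of \cite[Lemma~2.1]{HilSk}, i.e.\ that $1-T_f'\circ T_f$ is a chain homotopy $d_N\circ Y+Y\circ d_N$. This is where the real work is: writing $1-\psi(x)=x\varphi(x)$ with $\varphi$ rapidly decreasing and odd, one gets $1-\psi(d_N+d_N^*)=d_N\circ\varphi+\varphi\circ d_N$ and, after an explicit computation, $Y=\varphi(d_N+d_N^*)-\varphi(d_N+d_N^*)'$. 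One must also check that the Hilbert--Schmidt property of $\psi(d_N+d_N^*)$ does not depend on the Lipschitz Riemannian metric used to form adjoints (the paper does this via \cite[Lemma~5.1]{Hil1} and a Minmax comparison of eigenvalues). Without identifying the obstruction in $T_{p,v}$ and supplying a concrete regularizing substitute together with the new chain-homotopy identity, your argument does not close; the sentence ``a regularizing factor survives in the final formula for $C$'' is exactly the claim that needs proof, and in the Lipschitz setting it is false for the unmodified construction.
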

	%To prove it we will use the proof of \cite[Lemma 9.14]{PS3}, namely the part showing that this difference is a smoothing operator.
	
	\begin{proof}
		\item The proof of \cite[Theorem 3.3]{HilSk} is based on the construction of an operator $T_{p,v}$, that plays the role of the pull-back of forms.
		
		Let us take the following data:
		\begin{itemize}
			\item a submersion $p\colon M\times B^k\to N$, where $B^k$ is the  unit open disk of $\RR^k$;
			\item a smooth $k$-form $v$ with compact support on $B^k$, such that $\int_{B^k}v=1$. Put then  $\omega=~p_{B^k}^*(v)$.
		\end{itemize}
		Then $p^*\colon L^2(N,\Lambda_\CC(N)\otimes\mathcal{F}_N)\to L^2(M\times B^k,\Lambda_\CC(M\times B^k)\otimes p^*\mathcal{F}_N)$ is a bounded operator and $T_{p,v}$ is defined as the operator $\xi\mapsto q_*(\omega\wedge p^*(\xi))$.
		Consider the following commutative diagram
		\begin{equation}\label{submer}
			\xymatrix{& M\times B^k\ar[rd]^p\ar[ld]_{q}\ar[d]_{t}& \\
				M & M\times N\ar[l]^{p_M}\ar[r]_{p_N}& N}
		\end{equation}
		where $t=id_M\times p$. We get that, for $\xi\in L^2(N,\Lambda_\CC(N)\otimes\mathcal{F_N})$
		\begin{equation*}
			\begin{split}
				T_{p,v}(\xi)&=q_*(\omega\wedge p^*(\xi))=\\
				&=(p_M)_*t_*\left(\omega\wedge(t)^*p_N^*(\xi)\right)=\\
				&=(p_M)_*\left(t_*\omega\wedge p_N^*(\xi)\right).
			\end{split}
		\end{equation*}
		Notice that $(p_M)_*$ is nothing but the integration over $N$. Assume that $k$ and $p$ are chosen so that $t$ is a submersion.
		If we denote  the form $t_*\omega$ on $M\times N$ with $k(y,x)$, it turns out that $T_{p,v}(\xi)=\int_N k(x,y)\xi(x)$ is an integral operator with smooth kernel and consequently a smoothing operator.

		The operator $Y$ in \cite[Lemma 2.1(c)]{HilSk}, such that $1+T'_{p,v}\circ T_{p,v}=d_N\circ Y+Y\circ d_N$, is bounded of order $-1$ (see the proof of \cite[Lemma 2.2]{Wahl} for an explicit expression of $Y$).
		
		Now we can follow word by word the proof of \cite[Lemma 9.14]{PS3}, using the conventions in \cite[Section 3]{Wahl}.
		For simplicity let us consider the odd case. The perturbed signature  operator is then
		given by 
		\[
		\mathcal{D}_t=-iU_t(d_t\circ S_t+ S_t\circ d_t)\circ U_t^{-1}
		\] 
		where
		\[
		d_t=\begin{pmatrix}
		d_M & t T'_{p,v}\\
		0 &-d_N
		\end{pmatrix},\,\mbox{  }\, S_t=\mathrm{sign}\left(\tau_Z\circ L_t\right)\,\mbox{and  }\, U_t=|\tau_Z\circ L_t|^{\frac{1}{2}},
		\]
		with 
		\[
		L_t=\begin{pmatrix}
		1+T'_{p,v}\circ T_{p,v} &  (1-it\gamma\circ Y)\circ T'_{p,v}\\
		T_{p,v}\circ(1+it\gamma\circ Y) &1
		\end{pmatrix}
		\]
		One can easily see that $L_t=1+ H_t$, with $H_t$ smoothing.
		Moreover one has that  $|\tau_Z\circ L_t|=\sqrt{L_t^*L_t}=\sqrt{1+R_t}$, with $R_t$ smoothing.
		Observe that $0<L_t^*L_t=1+R_t$ implies that $R_t>-1$.
		It follows that $\sqrt{L_t^*L_t}-1=f(R_t)$, where
		$f(x)=\sqrt{1+x}-1$ is holomorphic on the spectrum of
		$R_t$ (-1 is a branch point for $f$). 
		Since $f(0)=0$, we have that $f(z)=az+zh(z)z$, where $h$ is a holomorphic function.
		
		Let us point out that if $S_0$ and $S_1$ are smoothing operators and $T$ is a bounded operator, then
		$S_0\circ T\circ S_1$ is smoothing.
		Then we immediately get that  $F_t:=|\tau_Z\circ L_t|-1=f(R_t)$ is smoothing. 
		With the same argument one can prove that $U_t=1+H_t$ with $H_t$ smoothing.
		
		By \cite[Lemma A.12]{PS3}, $|\tau_Z\circ L_t|^{-1}=1+F'_t$ and
		$U_t=1+H'_t$ with $F'_t$ and $H'_t$ smoothing. 
		Then one obtains that, $S_t=\tau_Z+G_t$ and $d_t=d_Z+ E_t$, where $G_t$ and $E_t$ are smoothing operators.
		
		Consequently one has that
		\[\mathcal{D}_t= -i(1+H_t)\left((d+E_t)\circ(\tau_Z+G_t)+(\tau_Z+G_t)\circ(d+E_t)\right)\circ(1+H'_t)\]
		is equal to $\mathcal{D}+ C_f$ with $C_f$ a compact operator.
		
		\item Now we have to prove that the Lipschitz Hilsum-Skandalis perturbation is bounded.
		In the smooth case we tackled the problem geometrically, here we try with a more analytical approach.
		An operator of order $-n$ is a bounded operator between $H^s(Z,E)$ and $H^{s+n}(Z,E)$, the Sobolev sections of $E$ of order $s$ and $s+n$, for any $s$. An operator is regularizing if it is of order $-\infty$.
		Equivalently one can say that an operator $T$ is regularizing (of order $-\infty$) if $D^n\circ T\circ D^m$ is a bounded operator on $L^2$-section for any $m,n\in\ZZ$.
		
		By \cite[Proposition 5.6]{Hil1} we know that the signature operator has compact resolvent, therefore its spectrum is a countable and discrete subset $\{\lambda_n\}_{n\in\NN}$ of $\RR$ such that  $\lim_{n\longrightarrow\infty}\lambda_n^2=+\infty$.
		
		Now let $\psi\in C^\infty_0(\RR)$ be a rapidly decreasing  even function such that $\psi(0)=1$. 
		Since $\psi$ is even, it turns out that $\psi(d_{N}+d_{N}^*)$ maps even/odd degree
		forms to even/odd degree forms and it is a Hilbert-Schmidt  operator: the proof of the first statement of \cite[Prop. 5.31]{Roe} works putting `Hilbert-Schmidt' instead of `smoothing'. Let us denote its kernel by $k(x,y)\in L^2(N\times N,\mathrm{End}(\Lambda_\CC(N)\otimes\mathcal{F}_N))$. 
		
		Define the compact operator $T_f\colon L^2(N,\Lambda_\CC(N)\otimes\mathcal{F}_N)\to L^2(M,\Lambda_\CC(M)\otimes f^*\mathcal{F}_N) $
		as the integral operator with kernel $\bar{q}_* (\omega\wedge\bar{p}^*k)\in L^2(M\times N,\mathrm{Hom}(\Lambda_\CC(N)\otimes\mathcal{F}_N,\Lambda_\CC(M)\otimes f^*\mathcal{F}_N))$, where $\bar{f}=f\times \mathrm{id}_N$ for $f$ equal to $p $ and $q$ as in diagram \eqref{submer}.
		
		This operator satisfies the hypothesis of \cite[Lemma 2.1]{HilSk}. Indeed, because of our choice of $\psi$, we have that $1-\psi(x)=x\cdot \varphi(x)$, where $\varphi$ is a rapidly decreasing odd function. Moreover
		$d_N^*\circ\varphi(d_N+d_N^*)=\varphi(d_N+d_N^*)\circ d_N$, since $\varphi$ is odd.
		Then we get the following formula
		\[
		1-\psi(d_N+d_N^*)=d_N\circ \varphi(d_N+d_N^*)+\varphi(d_N+d_N^*)\circ d_N
		\]
		and by construction $T_f'T_f=\psi(d_N+d_N^*)'\psi(d_N+d_N^*)$.
		Now it is easy to check that there exists an operator $Y\in \mathbb{B}(L^2(N,\Lambda_\CC(N)\otimes\mathcal{F}_N))$ such that $Y(\mathrm{dom}(d_N))\subset \mathrm{dom}(d_N)$ and that $1-T'_f\circ T_f=d_N\circ Y+Y\circ d_N$:
		\begin{equation*}
			\begin{split}
				1-T'_f\circ T_f&=1-\psi'\circ \psi=\\
				&=1-(1-d\circ \varphi-\varphi\circ d)'\circ(1-d\circ \varphi-\varphi\circ d)=\\
				&=1-(1-d\circ\varphi-\varphi\circ d-\varphi'd'-d'\circ\varphi'+\\
				&+d\circ\varphi\circ\varphi'\circ d+d\circ\varphi\circ d'\circ\varphi'+\varphi\circ d\circ\varphi'\circ d'+\varphi\circ d\circ d'\circ\varphi')=\\
				&=d\circ\varphi+\varphi\circ d-\varphi'd-d\circ\varphi'=\\
				&=d\circ Y+ Y\circ d,
			\end{split}
		\end{equation*}
		with $Y= \varphi(d_N+d_N^*)-\varphi(d_N+d_N^*)'$. We simplified some notations and we denoted $d_N$ by $d$, $\varphi(d_N+d^*_N)$ by $\varphi$ and the same for $\psi$. Moreover we used the following facts: $d_N'=-d_N$,  $\varphi\circ d=d^*\circ\varphi$ and $\varphi'\circ d'=(d')^*\circ\varphi'$.

		It is not difficult to check that the operator $T_f$ is a regularizing operator (and hence a compact operator), therefore the image of $T_f$ is in the domain of the Lipschitz signature operator.
		
		Then the boundedness of the Lipschitz Hilsum-Skandalis perturbation follows as in the smooth case.
		The only thing we have to care about is the dependence of this construction on the choice of the metric on $N$. In particular we have to check that $\psi(d_N+d_N^*)$ is Hilbert-Schmidt no matter which metric we use to take the adjoint.
		
		If we have two different metrics $g_0$ and $g_1$ on $N$, then by \cite[Lemma 5.1]{Hil1} we can complete the $Lip(N)$-module $Lip(N,\Lambda_\CC(N)\otimes\mathcal{F}_N)$ with respect to the two metrics and we obtain two isomorphic $C(N)$-Hilbert modules with compatible metrics:
		\[
		K^{-1}||\cdot||_1\leq||\cdot||_0\leq K||\cdot||_1 \,\, \exists K\in \RR^+\setminus{\{0\}}.
		\] 
		Then by the Minmax Theorem $|\lambda_n^0|\leq K^2|\lambda_n^1|$, where for any $n\in \NN$, $\lambda_n^i$ is the $n$-th eigenvalue of $d+d^*_i$.
		
		So it is easy to check that if $\psi$ is a rapidly decreasing function on the spectrum of $d+d^*_0$, it is rapidly decreasing on the spectrum of $d+d^*_1$ too. Therefore $\psi(d+d^*)$ is Hilbert-Schmidt independently of the metric we choice. 
		
	\end{proof}
	
	\begin{definition}
		Let $f\colon M\to N$ be a homotopy equivalence and  $Z=M\cup -N$.
		Denote by $\mathcal{C}_f$ the  perturbation of $\mathcal{D}_Z$ arising in the previous remark and call it a trivializing perturbation. 
		Note that it depends on the homotopy equivalence $f$.
	\end{definition}
	
	We recall from \cite{PS2} that there is an isomorphism of C*-algebras
	\[
	\mathbb{K}(L^2(Z,\Lambda_\CC(Z)\otimes\mathcal{F}))\simeq C^*(\widetilde{Z})^\Gamma.
	\]
	By \cite[Proposition 2.1]{Lance}, the above isomorphism gives an isomorphism at \red{the} level of multiplier algebras
	\begin{equation}\label{iso}
		\mathbb{B}(L^2(Z,\Lambda_\CC(Z)\otimes\mathcal{F}))\simeq \mathcal{M}(C^*(\widetilde{Z})^\Gamma).
	\end{equation}
	This isomorphism is given by the map $L_\pi$ defined in \cite[Section 2.2.1]{PS2}.
	Hence we can go from the Mishchenko bundle setting to the covering one.
	From now on $C_f$ will be the element in $\mathcal{M}(C^*(\widetilde{Z})^\Gamma)$  associated to 
	$\mathcal{C}_f\in \mathbb{B}(L^2(Z,\Lambda_\CC(Z)\otimes\mathcal{F}))$ through the map $L_\pi$. Moreover $\widetilde{D}_Z$ will indicate the operator on the covering induced  by the signature without coefficients in the Mishchenko bundle.

	\begin{remark}\label{fp}
		
		Consider a chopping function $\psi\in C_b(\RR)$ with compactly supported Fourier transform.  Thanks to Theorem \ref{Hil}
		we can prove that the functional calculus through $\psi$
		of the  operator  $\widetilde{D}_Z$ is an operator of finite propagation.
		The pseudolocality of $\widetilde{D}_Z$ comes from \cite[6.1]{Hil1}. Hence
		$\psi(\widetilde{D}_Z) \in D^*(\widetilde{Z})^\Gamma$.
	\end{remark}
	
	\begin{proposition}
		The difference between $\psi(\widetilde{D}_Z)$ and   $\psi(\widetilde{D}_Z+C_f)$ belongs to $C^*(\widetilde{Z})^\Gamma$.  
	\end{proposition}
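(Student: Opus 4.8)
The plan is to reduce the statement to the fact that $C^*(\widetilde Z)^\Gamma$ is a closed two‑sided ideal in its multiplier algebra. Indeed, via the isomorphism \eqref{iso} the algebra $C^*(\widetilde Z)^\Gamma$ is identified with the compact operators $\mathbb K(H)$ on the Hilbert $C^*_r(\Gamma)$‑module $H=L^2(Z,\Lambda_\CC(Z)\otimes\mathcal F)$, while $\mathcal M(C^*(\widetilde Z)^\Gamma)\simeq\mathbb B(H)$ is the algebra of adjointable operators. By the preceding Proposition the perturbation $C_f$ lies in $C^*(\widetilde Z)^\Gamma$, and it is self‑adjoint (being a difference of self‑adjoint operators), so $\widetilde D_Z+C_f$ is again self‑adjoint on $\mathrm{dom}(\widetilde D_Z)$ — a bounded self‑adjoint perturbation — and it generates a strongly continuous one‑parameter unitary group $e^{it(\widetilde D_Z+C_f)}$ consisting of $\Gamma$‑equivariant bounded operators, i.e. of multipliers of $C^*(\widetilde Z)^\Gamma$.

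Next I would compute the difference by the wave‑operator method, as in Remark \ref{fp} and in \cite{PS2}. Writing $\psi$ through its Fourier representation (recall that $\widehat\psi$ is a compactly supported distribution, so that $\psi$ is smooth and $\psi(B)=\tfrac1{2\pi}\int_{\RR}\widehat\psi(t)e^{itB}\,dt$ for any self‑adjoint $B$, interpreted as in Remark \ref{fp}), one has
\[
\psi(\widetilde D_Z+C_f)-\psi(\widetilde D_Z)=\frac{1}{2\pi}\int_{\RR}\widehat\psi(t)\,\big(e^{it(\widetilde D_Z+C_f)}-e^{it\widetilde D_Z}\big)\,dt ,
\]
and Duhamel's formula gives
\[
e^{it(\widetilde D_Z+C_f)}-e^{it\widetilde D_Z}=i\int_0^t e^{i(t-s)(\widetilde D_Z+C_f)}\,C_f\,e^{is\widetilde D_Z}\,ds .
\]
For every fixed $(t,s)$ the integrand $e^{i(t-s)(\widetilde D_Z+C_f)}\,C_f\,e^{is\widetilde D_Z}$ is a product of two multipliers of $C^*(\widetilde Z)^\Gamma$ with the element $C_f\in C^*(\widetilde Z)^\Gamma$, hence it belongs to the ideal $C^*(\widetilde Z)^\Gamma$. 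Since $\widehat\psi$ is compactly supported, the resulting double integral is extended over a compact region of parameters; once it is seen to converge in the norm of $\mathbb B(H)$, then, $C^*(\widetilde Z)^\Gamma$ being norm‑closed, its value lies in $C^*(\widetilde Z)^\Gamma$, which is the assertion. Note that, in contrast with Remark \ref{fp}, the finite propagation speed of Theorem \ref{Hil} plays no role here: the ideal property does all the work.

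The point that genuinely needs checking — and which I expect to be the main obstacle — is the norm‑continuity in $(t,s)$ of $F(t,s):=e^{i(t-s)(\widetilde D_Z+C_f)}\,C_f\,e^{is\widetilde D_Z}$, needed for the Duhamel integral to exist as a norm‑convergent (Bochner) integral rather than merely a strong one. Here one uses crucially that $C_f$ is compact: writing $U_r=e^{ir(\widetilde D_Z+C_f)}$ and $V_s=e^{is\widetilde D_Z}$, the map $r\mapsto U_rC_f$ is norm‑continuous because $U_r$ is strongly continuous and $C_f$ is a norm‑limit of finite‑rank operators, and the map $s\mapsto C_fV_s$ is norm‑continuous because $V_s\to V_{s'}$ $\ast$‑strongly as $s\to s'$ (both $e^{is\widetilde D_Z}$ and its adjoint $e^{-is\widetilde D_Z}$ are strongly continuous) while $C_f$ is compact; combining the two estimates yields joint norm‑continuity of $F$, and $\|F(t,s)\|\le\|C_f\|$ gives the required integrability. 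This completes the proof.
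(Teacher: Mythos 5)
Your route is genuinely different from the paper's. The paper first treats the particular chopping function $\psi_1(t)=t(1+t^2)^{-1/2}$: it forms the $2\times 2$ matrices $\left(\begin{smallmatrix}\mathcal D_Z&0\\0&\mathcal D_Z+\mathcal C_f\end{smallmatrix}\right)$ and $\left(\begin{smallmatrix}0&1\\1&0\end{smallmatrix}\right)$, whose commutator is the bounded operator $\left(\begin{smallmatrix}0&-\mathcal C_f\\\mathcal C_f&0\end{smallmatrix}\right)$, and invokes the Baaj--Julg estimate to conclude that the off-diagonal entries of the commutator after applying $\psi_1$, namely $\pm(\psi_1(\mathcal D_Z)-\psi_1(\mathcal D_Z+\mathcal C_f))$, are compact; a general chopping function is then reduced to $\psi_1$ because the difference lies in $C_0(\RR)$ and $\mathcal D_Z$ has compact resolvent. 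You instead use Fourier inversion, Duhamel's formula and the ideal property. Your Duhamel step and your norm-continuity argument (strong continuity of the unitary groups played against the $C^*$-module compactness of $C_f$) are correct.

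However, there is a genuine gap precisely at the point you treat as routine: the outer integral against $\widehat\psi$. A chopping function is not integrable, so $\widehat\psi$ is not an $L^1$ density or a finite measure. Indeed $\psi'\in L^1$ with $\widehat{\psi'}(0)=\int_\RR\psi'\,dt=2\neq 0$, and $it\,\widehat\psi(t)=\widehat{\psi'}(t)$, so $\widehat\psi$ is (up to a multiple of $\delta_0$) the principal value of $\widehat{\psi'}(t)/(it)$ -- a distribution with a non-locally-integrable $1/t$ singularity at the origin. Consequently ``compactly supported $\widehat\psi$ plus a norm-continuous, uniformly bounded integrand'' does not yield convergence of $\int_\RR\widehat\psi(t)\bigl(e^{it(\widetilde D_Z+C_f)}-e^{it\widetilde D_Z}\bigr)\,dt$, and the step where you conclude that the value lies in the norm-closed ideal is unjustified as written. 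The gap is repairable within your framework: Duhamel gives $\bigl\|e^{itB_1}-e^{itB_0}\bigr\|\leq |t|\,\|C_f\|$, and in fact $t^{-1}\bigl(e^{itB_1}-e^{itB_0}\bigr)$ extends to a norm-continuous, ideal-valued function of $t$ with value $iC_f$ at $t=0$; hence the $\delta_0$ contribution vanishes and the pairing can be rewritten as $\int_\RR\widehat{\psi'}(t)\,(it)^{-1}\bigl(e^{itB_1}-e^{itB_0}\bigr)\,dt$ with $\widehat{\psi'}$ continuous and compactly supported, which is an honest Bochner integral of ideal-valued terms. Alternatively one can split $\psi=\psi_1+(\psi-\psi_1)$ with $\psi-\psi_1\in C_0(\RR)$ and use the compact resolvent for the remainder, as the paper does. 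Either way, the singularity of $\widehat\psi$ at the origin must be confronted; your proposal identifies the norm-continuity of the Duhamel integrand as the main obstacle, but that is the easy part.
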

	
	\begin{proof}
		Moving to the Mishchenko bundle setting through \eqref{iso}, we should prove that
		the difference $\psi(\mathcal{D}_Z)-\psi(\mathcal{D}_Z+\mathcal{C}_f)$ belongs to $\mathbb{K}(L^2(Z,\Lambda_\CC(Z)\otimes\mathcal{F}))$.
		If $\psi_1(t)=t(1+t^2)^{-\frac{1}{2}}$, by \cite[Proposition 2.2]{BJ} we have that $[\psi_1(\mathcal{D}_Z),a]$ belongs to the algebra of compact C*-module operators.
		Therefore if we consider the matrices $\left[\begin{smallmatrix}
		\mathcal{D}_Z& 0\\ 0 &\mathcal{D}_Z+\mathcal{C}_f
		\end{smallmatrix}\right]$ and $\left[\begin{smallmatrix}
		0& 1\\ 1 &0
		\end{smallmatrix}\right]$, their bracket consists in $\left[\begin{smallmatrix}
		0 & -\mathcal{C}_f\\ \mathcal{C}_f &0 
		\end{smallmatrix}\right]$, that is known to be bounded.
		Then, after applying the functional calculus through $\psi_1$, we deduce  that the matrix components in the bracket
		\[\pm(\psi_1(\mathcal{D}_Z)-\psi_1(\mathcal{D}_Z+\mathcal{C}_f))\] are compact.
		
		Now notice that two different chopping functions differ  by a function in $C_0(\RR)$.
		Taking into account the correspondence stated in \eqref{iso}, we have  that the resolvent of $\mathcal{D}_Z$, 
		given by $(i+\mathcal{D}_Z)^{-1}$,
		is compact (see \cite[Proposition 5.6]{Hil1}) and
		since $\phi(t)=(i+t)^{-1}$ generates $C_0(\RR)$, the functional calculus of $D_Z$  through a function in $C_0(\RR)$ gives a compact operator.
		Then if $\psi'$ is any chopping function, it turns out that
		\[
		\psi(\mathcal{D}_Z)-\psi(\mathcal{D}_Z+\mathcal{C}_f)=\psi_1(\mathcal{D}_Z)-\psi_1(\mathcal{D}_Z+\mathcal{C}_f)+\mbox{compacts operators}
		\] 
		and the right-hand side term is compact.
		
	\end{proof}
	
	\begin{corollary} \label{Cor 3.1}
		The operator $\chi(\tilde{D}_Z+C_f)$,
		with  $\chi(x)=\frac{x}{|x|}$, is a bounded involution in $D^*(\widetilde{Z})^\Gamma$.
	\end{corollary}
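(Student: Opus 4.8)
The plan is to combine the spectral gap of Remark \ref{pert} with the two statements established just above. Recall from Remark \ref{pert} that the perturbed operator $\widetilde{D}_Z+C_f$ is self-adjoint (the Hilsum--Skandalis trivializing perturbation being self-adjoint) and invertible, with a gap in its spectrum around $0$. Hence $\chi(x)=x/|x|=\mathrm{sign}(x)$ is continuous on $\mathrm{spec}(\widetilde{D}_Z+C_f)$ --- in fact it agrees there with any chopping function which is identically $\pm1$ outside the gap --- so $\chi(\widetilde{D}_Z+C_f)$ is a well-defined operator obtained by (continuous, or Borel) functional calculus. It is bounded since $|\chi|\le 1$, self-adjoint since $\chi$ is real-valued, and an involution since $\chi^2\equiv 1$ on the spectrum, so that $\chi(\widetilde{D}_Z+C_f)^2=1$. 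The only substantive point left is membership in $D^*(\widetilde{Z})^\Gamma$.

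For that, fix a chopping function $\psi\in C_b(\RR)$ whose Fourier transform has compact support. By Remark \ref{fp} we have $\psi(\widetilde{D}_Z)\in D^*(\widetilde{Z})^\Gamma$, and by the Proposition immediately preceding this corollary, $\psi(\widetilde{D}_Z)-\psi(\widetilde{D}_Z+C_f)\in C^*(\widetilde{Z})^\Gamma$. Since $C^*(\widetilde{Z})^\Gamma$ is an ideal in $D^*(\widetilde{Z})^\Gamma$, it follows that $\psi(\widetilde{D}_Z+C_f)\in D^*(\widetilde{Z})^\Gamma$. It then remains to control the difference $\chi-\psi\in C_0(\RR)$, for which I claim $g(\widetilde{D}_Z+C_f)\in C^*(\widetilde{Z})^\Gamma$ for every $g\in C_0(\RR)$. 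Passing to the Mishchenko bundle picture through \eqref{iso}, it is enough to see that $g(\mathcal{D}_Z+\mathcal{C}_f)$ is a $C^*$-module compact operator; since $\mathcal{C}_f$ is bounded --- indeed $C^*$-module compact, by the first Proposition of this subsection --- the resolvent identity gives
\[
(i+\mathcal{D}_Z+\mathcal{C}_f)^{-1}-(i+\mathcal{D}_Z)^{-1}=-(i+\mathcal{D}_Z+\mathcal{C}_f)^{-1}\,\mathcal{C}_f\,(i+\mathcal{D}_Z)^{-1},
\]
and $(i+\mathcal{D}_Z)^{-1}$ is compact by \cite[Proposition 5.6]{Hil1}, so $(i+\mathcal{D}_Z+\mathcal{C}_f)^{-1}$ is compact as well; as $\phi(t)=(i+t)^{-1}$ generates $C_0(\RR)$, every $g(\mathcal{D}_Z+\mathcal{C}_f)$ with $g\in C_0(\RR)$ is compact, and \eqref{iso} carries it into $C^*(\widetilde{Z})^\Gamma$.

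Putting the pieces together, $\chi(\widetilde{D}_Z+C_f)=\psi(\widetilde{D}_Z+C_f)+(\chi-\psi)(\widetilde{D}_Z+C_f)$ lies in $D^*(\widetilde{Z})^\Gamma$, and by the first paragraph it is a bounded involution. The step requiring care is the passage from the ``good'' chopping function $\psi$ --- the one that belongs to $D^*(\widetilde{Z})^\Gamma$ by finite propagation (Theorem \ref{Hil} and Remark \ref{fp}) --- to the genuine sign function $\chi$: this is where both the spectral gap of Remark \ref{pert}, needed so that $\chi$ can be applied to the perturbed operator at all, and the compactness of its resolvent, needed so that the $C_0$-error term lands in the ideal $C^*(\widetilde{Z})^\Gamma$ rather than merely in $D^*(\widetilde{Z})^\Gamma$, are both used.
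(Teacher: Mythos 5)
Your proof is correct and follows essentially the route the paper intends for this corollary: combine the spectral gap of Remark \ref{pert}, the membership $\psi(\widetilde{D}_Z)\in D^*(\widetilde{Z})^\Gamma$ from Remark \ref{fp}, the preceding Proposition on the difference of chopping functions, and the compact-resolvent argument (already used inside that Proposition's proof) to absorb the $C_0(\RR)$-error into $C^*(\widetilde{Z})^\Gamma$. The resolvent identity you supply to transfer compactness of the resolvent from $\mathcal{D}_Z$ to $\mathcal{D}_Z+\mathcal{C}_f$ is a small but welcome detail the paper leaves implicit.
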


	Thanks to Corollary \ref{Cor 3.1} we can define  a class  by setting
	\[\varrho(\widetilde{D}_Z+C_f)=\left[\frac{1}{2}(1+\chi(\widetilde{D}_Z+C_f))\right]\in K_0(D^*(\widetilde{Z})^\Gamma).\]
	Now consider the map $\varphi\colon Z\to M$ such that $\varphi_{|N}=f$ and $\varphi_{|-M}=-\mathrm{Id}_M$; we  can clearly see that $\varphi$ is covered by 
	a $\Gamma$-equivariant map $\widetilde{\varphi}\colon \widetilde{Z}\to \widetilde{M}$.

	\begin{definition}\label{rho}
		Let $f\colon M\to N$ be a homotopy equivalence between two compact oriented Lipschitz manifolds.
		Consider $Z=M\cup-N$ and its covering $\widetilde{Z}$ 
		associated, as above, to a classifying map $u\colon Z\to B\Gamma$. Let $\widetilde{D}_Z$ be the Lipschitz signature operator 
		and let $C_f$ be the trivializing perturbation  associated to $f$. We define
		\[
		\varrho (f\colon M\to N):= \widetilde{\varphi}_*\left[\frac{1}{2}(1+\chi(\widetilde{D}_Z+C_f))\right]\in K_0(D^*(\widetilde{M})^\Gamma)
		\]
		and
		\[
		\varrho_\Gamma(f\colon M\to N)=u_*\varrho (f\colon M\to N)\in K_0(D^*_\Gamma).
		\]
		
	\end{definition}
	
	\begin{proposition}\label{dep}
		The $\varrho$-class does not depend on the choice of the Lipschitz structure.
	\end{proposition}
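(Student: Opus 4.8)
The plan is to use Sullivan's uniqueness statement to realise a change of Lipschitz structure by a bi-Lipschitz homeomorphism isotopic to the identity, to transport the whole construction of Definition \ref{rho} along the unitary this homeomorphism induces on $L^{2}$-forms, and then to observe that the resulting automorphism of $D^{*}(\widetilde Z)^{\Gamma}$ is the identity on $K$-theory because the homeomorphism is at bounded distance from the identity.

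First I would fix two Lipschitz structures $L_{1},L_{2}$ on $Z=M\cup-N$ and, by Theorem \ref{sull}, a bi-Lipschitz homeomorphism $h\colon(Z,L_{1})\to(Z,L_{2})$ isotopic to $\mathrm{id}_{Z}$. Since $h$ is homotopic to the identity it is compatible with the classifying map $u\colon Z\to B\Gamma$, hence lifts to a $\Gamma$-equivariant bi-Lipschitz homeomorphism $\widetilde h\colon\widetilde Z\to\widetilde Z$; lifting the isotopy starting at $\mathrm{id}_{\widetilde Z}$ gives a $\Gamma$-equivariant homotopy from $\mathrm{id}_{\widetilde Z}$ to $\widetilde h$, and by compactness of $Z\times[0,1]$ together with $\Gamma$-equivariance its tracks are uniformly bounded. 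Thus $\widetilde h$ is a proper, bornologous map at bounded distance from $\mathrm{id}_{\widetilde Z}$, in particular a coarse equivalence.

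Next I would observe that, being bi-Lipschitz, $\widetilde h$ is differentiable almost everywhere with Jacobian bounded above and below (Rademacher), so pullback of forms makes sense at the $L^{2}$ level; choosing on $(\widetilde Z,L_{1})$ the Lipschitz metric $\widetilde h^{*}g_{2}$, the operator $U:=\widetilde h^{*}$ is a $\Gamma$-equivariant unitary $L^{2}(\widetilde Z,\Lambda_{\CC}(\widetilde Z);L_{2},g_{2})\to L^{2}(\widetilde Z,\Lambda_{\CC}(\widetilde Z);L_{1},\widetilde h^{*}g_{2})$ (the change-of-variables formula makes it an isometry). Naturality of $d$ and of the Hodge operator $\tau_{Z}$ under metric pullback gives $U\widetilde D_{Z}^{(2)}U^{-1}=\widetilde D_{Z}^{(1)}$, and since $\widetilde h$ has bounded displacement and is proper, $U$ and $U^{-1}$ have finite propagation, so conjugation by $U$ preserves finite propagation, pseudolocality and local compactness, and therefore restricts to $*$-isomorphisms of $D^{*}(\widetilde Z)^{\Gamma}$ and of $C^{*}(\widetilde Z)^{\Gamma}$. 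Because $\varrho$ does not depend on the Lipschitz metric inside a fixed Lipschitz structure (the same metric-independence argument used above and in \cite{PS2}), I may compute $\varrho_{L_{1}}(f)$ with the metric $\widetilde h^{*}g_{2}$. Now $U(\widetilde D_{Z}^{(2)}+C_{f}^{(2)})U^{-1}=\widetilde D_{Z}^{(1)}+UC_{f}^{(2)}U^{-1}$, the perturbation $UC_{f}^{(2)}U^{-1}$ is again a trivializing perturbation of $\widetilde D_{Z}^{(1)}$ (it is $C^{*}$-compact, self-adjoint and produces the same spectral gap, being conjugate to one), and $\chi(UAU^{-1})=U\chi(A)U^{-1}$; using the independence of $\varrho$ from the choice of trivializing perturbation (again verbatim from \cite{PS2}) and the fact that $\widetilde\varphi$ does not involve the Lipschitz structure, one obtains
\[
\varrho_{L_{1}}(f)=\widetilde\varphi_{*}\,\mathrm{Ad}(U)_{*}\Big[\tfrac12\big(1+\chi(\widetilde D_{Z}^{(2)}+C_{f}^{(2)})\big)\Big].
\]

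The remaining, and main, point is that $\mathrm{Ad}(U)_{*}=\mathrm{id}$ on $K_{*}(D^{*}(\widetilde Z)^{\Gamma})$. Here $U$ is a unitary covering isometry of the coarse equivalence $\widetilde h$, which is at bounded distance from $\mathrm{id}_{\widetilde Z}$, whose covering isometry is $1$; the standard fact in equivariant coarse index theory that two close, proper, bornologous maps induce the same map on the $K$-theory of $D^{*}$ and $C^{*}$ then shows that conjugation by such a $U$ is the identity on $K_{*}(D^{*}(\widetilde Z)^{\Gamma})$ (see \cite{HigRoeI,HigRoeII,HigRoeIII} and \cite{PS2}). Combining this with the displayed identity gives $\varrho_{L_{1}}(f)=\varrho_{L_{2}}(f)$, and pushing forward by $u_{*}$ yields the corresponding equality for $\varrho_{\Gamma}$. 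I expect the verification that the tracks of the lifted isotopy are uniformly bounded — so that $\widetilde h$ really is close to the identity and the coarse-triviality of $\mathrm{Ad}(U)$ applies — to be the only genuinely delicate step; everything else is just transporting the construction of Definition \ref{rho} along $U$.
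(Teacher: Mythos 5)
Your proposal is correct and follows essentially the same route as the paper: both rest on Sullivan's uniqueness theorem to produce a bi-Lipschitz homeomorphism isotopic to the identity, transport the $\varrho$-class along it by functoriality, and then use the isotopy to identify the transported class with the original one in $K_*(D^*(\widetilde Z)^\Gamma)$. The paper phrases the last step as a path of $*$-isomorphisms $\phi^t_*$ induced by the isotopy, whereas you invoke the coarse-geometric fact that the lift is at bounded distance from (and equivariantly properly homotopic to) the identity; your write-up also makes explicit the unitary implementing pullback of $L^2$-forms, which the paper subsumes under ``functoriality of Teleman's construction.''
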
 
	\begin{proof}
		The second part of Theorem \ref{sull} can be formulated as follows:
		let $\mathcal{L}_1$ and $\mathcal{L}_2$ be two different Lipschitz structures on $Z$, then there exists a bi-Lipschitz homeomorphism
		$\phi\colon Z\to Z$, isotopic to the identity through a path $\phi^t$ and such that $\phi^*(\mathcal{L}_2)=\mathcal{L}_1$, where $\phi^*\colon C(Z)\to C(Z)$ is the induced *-homomorphism.
		Because of the functoriality of Teleman's construction we know that $\phi_*(\varrho_1)=\varrho_2$, where $\varrho_1$ and $\varrho_2$ are the invariants associated to two different Lipschitz structures.
		The isotopy $\phi^t$ induces a paths of *-isomorphisms 
		$\phi_*^t\colon D^*(\tilde{Z})^\Gamma\to D^*(\tilde{Z})^\Gamma$.
		Then  $\phi_*^t(\varrho_1)$
		gives a homotopy between $\varrho_2$ and $\varrho_1$.
	\end{proof}
	
	\subsection{Perturbed signature operator on manifolds with cylindrical ends}
	
	In this section we are going to check that the construction we made for $\varrho$ and 
	$\varrho_\Gamma$ are well defined on the structure set $\mathcal{S}^{TOP}(N)$.

	For this purpose we will use the results presented in \cite{Wahl,PS,PS2}, where the authors
	have developed the theory in the smooth setting. Their methods are rather abstract and they also hold in the Lipschitz context.

	In order to develop the theory for manifolds with cylindrical ends, we are going to use the same  notations as \cite[2.19]{PS2}.
	
	The geometrical setting is the following: let $f\colon M\to N$ and $f'\colon M'\to N$ be two topological structures for $N$; 
	let $W$ be a cobordism between $\partial_0 W=M$ and $\partial_1 W=M'$ and let $W_\infty$ be the manifold
	with the infinite semi-cylinder $\partial W\times \RR_{\leq0}$ attached to the boundary; 
	let $V=~N\times[0,1]$ and let $V_\infty$ be the complete cylinders with base $\partial V=N$;
	there is a homotopy equivalence $F\colon W_\infty \to V_\infty$ which has the product form
	$F_\partial\times \mathrm{id}_{\RR_{\leq0}}$ on the cylindrical ends, where $F_{\partial_0}=~f\colon M\to N$
	and $F_{\partial_1}=f'\colon M'\to N$, both of them being homotopy equivalences.

	Thanks to the results presented in \cite{Hil2} we have a well defined Lipschitz signature complex on  the manifold
	$X=~W_\infty\cup-V_\infty$. Notice that $\partial_0 X=Z$ and $\partial_1 X=Z'$.
	
	First of all we need a generalization of  Theorem \ref{hs} for manifolds with cylindrical ends.
	This result is given by \cite[Proposition 8.1]{Wahl}, where  
	a perturbation of the signature operator is associated    to the homotopy equivalence $F$. Such a perturbation makes the operator invertible, as in the usual case.
	
	\begin{remark}
		As well as in the case presented in Theorem \ref{hs}, the generalization developed in \cite[Proposition 8.1]{Wahl} is still valid in the Lipschitz setting.
	\end{remark}
	
	The goal of this section is to check that the $\varrho$-class is well defined on the h-cobordism classes:
	as pointed out in \cite[Proposition 4.7]{PS2}, this is obtained by the
	combination of \cite[Theorem 8.4]{Wahl} and  \cite[Corollary 3.3]{PS2}.

	In \cite[Theorem 8.4]{Wahl}  all constructions work in the Lipschitz framework, where we
	do not consider the parameter $\varepsilon$. Wahl builds a perturbation  $\mathcal{C}_{F}^{\mathrm{cyl}}$ of the signature operator, supported on the cylindrical ends,
	from the perturbations on $Z$ and $Z'$;
	hence she constructs a homotopy of operators 
	between $\mathcal{D}_X+\mathcal{C}_{F}^{\mathrm{cyl}}$ and an other operator that, thanks to the Bunke's relative index theorem,
	has vanishing index.
	
	For the proof of the equality we just mentioned, the only not obvious point in the Lipschitz case is  the one concerning the relative index theorem
	proved in \cite{Bu}, since what remains of the proof uses abstract theory of unbounded operators and spectral flow methods.
	
	It is worth formulating Bunke's Theorem in the Lipschitz case and giving a sketch of its proof.
	
	\subsubsection{Bunke's relative index theorem for Lipschitz manifolds}

	The idea of the theorem is the following: let $X$ be a manifold, let $E\to X$ be a bundle  and $D$ a Fredholm operator on the sections of this
	bundle; if there exists a hypersurface $Y$ in $X$ such that the operator is invertible near $Y$, we can cut the manifold (and the bundle) along 
	$Y$ and we can paste a semicylinder to the boundary of both parts obtained after cutting, extending the bundle and the operator  along the semicylinder.
	Then we obtain an operator whose index equals the index of the original operator.
	
	More precisely we are considering the following data:
	the Lipschitz manifold $X$ we have defined in the previous subsection, the  Hilbert module $L^2(X,\Lambda_\CC(X)\otimes\mathcal{F})$
	of  $L^2$-forms on $X$ twisted by the Mishchenko bundle, that we are going to denote by $\mathcal{H}^0$;
	a regular operator $G$ that is the  twisted Lipschitz signature
	operator,  possibly perturbed by a bounded operator; we suppose that there is a Lipschitz function with compact support $f\geq0$ and $(G^2+f)^{-1}\in~\mathbb{B}(\mathcal{H}^0,\mathcal{H}^2)$
	(here $\mathcal{H}^2$ is the maximal domain of the square of the signature operator).
	
	\begin{definition}
		Let $\mathrm{Lip}_K(X)$ be the set of bounded Lipschitz functions $h$ such that for all $\varepsilon>0$ there exists a compact $C\subset X$,
		with $||dh_{|X\setminus C}||_{L^\infty}<\varepsilon$.
		Let us call $C_K(X)$ the closure of $\mathrm{Lip}_K(X)$ in the sup-norm.
	\end{definition}
	
	For the comfort of the reader, we recall the theorem stated in the Lipschitz setting.
	Let $E_i \to X_i$, $i =1,2$, be the two $C^*(\Gamma)$-C* bundles $\Lambda_{\CC}(X_i)\otimes \mathcal{F}_i$,  with operator $G_i$, associated to them as above. Let $W_i\cup_{Y_i}V_i$ be a partition of $X_i$ where $Y_i$  is a compact hypersurface.
	Assume that there is a commutative diagram of isomorphisms of all structures
	\[
	\xymatrix{\Psi\colon & E_{1|U(Y_1)}\ar[r]\ar[d]&  E_{2|U(Y_2)}\ar[d] \\
		\psi\colon & U(Y_1) \ar[r] &  U(Y_2)\\
		\psi_{|Y_1} &\colon Y_1\ar[r]\ar[u] & Y_2\ar[u]}
	\]
	where  $U(Y_i)$ is a tubular neighbourhood of $Y_i$, for $i = 1, 2$. We cut $X_i$ at
	$Y_i$, glue the pieces together interchanging the boundary components and obtain
	$X_3 := W_1\cup_Y V_2$ and $X_4 := W_2\cup_Y V_1$. Moreover, we glue the bundles using
	$\Psi$, which yields A-C* bundles $E_3\to X_3$ and $E_4\to X_4$ and we assume
	that $G_i$, $i = 3,4$ are again invertible at infinity. We define $[X_i]$ as the class $[\mathcal{H}^0_i,\frac{G_i}{G_i^2+f}]\in KK(C_K(X_i),C^*(\Gamma))$.
	The algebra $C_K(X)$ is unital. Hence, there is an embedding $i \colon \CC \to C_K(X)$
	and an induced map
	\[i^* \colon KK( C_K(X),C^*_r(\Gamma) )\to  KK(\CC),C^*_r(\Gamma) ).\]
	Set $\{X_i\} := i^*[X_i] \in KK(\CC,C^*_r(\Gamma))$ for $i = 1,\dots,4$.
	
	\begin{theorem}[\cite{Bu}]\label{bunke}
		\[\{X_l\} + \{X_2\} -\{X_3\} - \{X_4\} = 0.\]
	\end{theorem}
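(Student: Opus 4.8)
The plan is to deduce Theorem \ref{bunke} from a single \emph{splitting lemma}, following Bunke's argument in the smooth case. The lemma is the following: suppose the operator $G$ on $X = W \cup_Y V$ is invertible on a tubular neighbourhood $U(Y) \cong Y \times (-1,1)$ of the compact hypersurface $Y$ — which is the case in all the situations considered in \cite{Wahl} and \cite{PS}, where the perturbed signature operator has a spectral gap at $0$ along the cutting hypersurfaces. Cut $X$ along $Y$ and attach to each of the two resulting Lipschitz manifolds the half--cylinder $Y \times [0,+\infty)$ carrying the translation--invariant operator determined by $G|_{U(Y)}$; the outcomes $W_\infty$ and $V_\infty$ are complete and still invertible at infinity, hence define classes, and
\[
\{X\} = \{W_\infty\} + \{V_\infty\} \qquad \text{in } KK(\CC, C^*_r(\Gamma)).
\]
Granting the lemma, Theorem \ref{bunke} is formal: since the isomorphism $\Psi$ identifies the germ of $(E_1, G_1)$ along $Y_1$ with the germ of $(E_2, G_2)$ along $Y_2$, the half--cylinder models built over $Y_1$ and over $Y_2$ are the \emph{same} objects, so applying the splitting lemma to each $X_i$ gives
\[
\{X_1\} = \{(W_1)_\infty\} + \{(V_1)_\infty\}, \qquad \{X_2\} = \{(W_2)_\infty\} + \{(V_2)_\infty\},
\]
\[
\{X_3\} = \{(W_1)_\infty\} + \{(V_2)_\infty\}, \qquad \{X_4\} = \{(W_2)_\infty\} + \{(V_1)_\infty\},
\]
and the alternating sum is visibly zero.

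To prove the splitting lemma I would lengthen the collar and pass to the limit. After a homotopy of Lipschitz metrics, which leaves the index unchanged, I may assume the metric and $G$ are in product form on $U(Y)$; then for $T > 0$ I insert a cylinder of length $2T$, replacing $Y \times (-1,1)$ by $Y \times (-T,T)$, and call the result $X_T$ with operator $G_T$ and bounded transform $F_T := G_T(G_T^2 + f)^{-1/2}$. Invertibility of $G$ near $Y$ gives the cylindrical model a spectral gap $(-\delta,\delta)$ independent of $T$. The first step is to verify that $T \mapsto F_T$ is norm--continuous and that $F_T^2 - 1$ and $[F_T, a]$ for $a \in C_K(X)$ are compact and vary continuously in $T$ — norm estimates that use only Hilsum's finite propagation speed (Theorem \ref{Hil}), applied to a chopping function with compactly supported Fourier transform, the pseudolocality of the Lipschitz signature operator (\cite[6.1]{Hil1}), and the compactness of its resolvent (\cite[Proposition 5.6]{Hil1}); consequently $\{X_T\}$ is independent of $T$. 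The second step is to let $T \to +\infty$: a partition of unity subordinate to the long collar, with a Lipschitz cut--off supported near its middle, together with the uniform gap, pushes the essential support of the cycle away from the junction and identifies $\{X_T\}$, for $T$ large, with $\{W_\infty\} + \{V_\infty\}$. No pseudodifferential calculus enters anywhere; the three analytic facts just quoted are precisely Hilsum's results for the Lipschitz signature operator, which is what makes the whole scheme transport to the Lipschitz category.

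The step I expect to be the main obstacle is the last one — upgrading the convergence of $X_T$ to $W_\infty \sqcup V_\infty$ from an equality of indices to an equality of Kasparov cycles over $C_K(X)$. Concretely one must fix a common model for the Hilbert modules $L^2(X_T, \Lambda_\CC(X_T) \otimes \mathcal{F})$ outside the collar, control the collar's contribution — and it is exactly here that the $T$--independent spectral gap is needed, to absorb the non--compactness created by the growing cylinder — and then exhibit an explicit operator homotopy realising the equality of classes. In the smooth case this is carried out with relative heat--kernel estimates; the substitute here is the combination of Theorem \ref{Hil} with the Rellich--type compactness of \cite[Proposition 5.6]{Hil1}. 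For the original argument we refer to \cite{Bu}, and for the use of the statement in the present paper to \cite[Theorem 8.4]{Wahl} and \cite[2.19]{PS2}.
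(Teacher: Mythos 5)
Your route is genuinely different from the one the paper takes, and it has a gap at the level of hypotheses. Theorem \ref{bunke} only assumes that the operators $G_i$ are invertible at infinity (i.e.\ $(G_i^2+f)^{-1}$ is bounded for some compactly supported Lipschitz $f\geq 0$) together with an isomorphism $\Psi$ of all structures over the tubular neighbourhoods $U(Y_i)$; it does \emph{not} assume that $G_i$ is invertible near the cutting hypersurface. Your splitting lemma $\{X\}=\{W_\infty\}+\{V_\infty\}$ needs that extra invertibility --- without it the attached cylinders carry a non-invertible translation-invariant operator, so $W_\infty$ and $V_\infty$ are not invertible at infinity and do not define classes at all --- hence what you prove is a special case of the theorem, not the theorem as stated (it happens to be the case relevant to \cite[Theorem 8.4]{Wahl}, but the statement under review is the general one). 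On top of that, you yourself flag that the identification of $\{X_T\}$ for large $T$ with $\{W_\infty\}+\{V_\infty\}$ as an equality of Kasparov cycles over $C_K(X)$ is left open; that is exactly the hard analytic content of the neck-stretching approach, so the proof is incomplete even in the invertible-near-$Y$ case.

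The paper does something much lighter. It observes that Bunke's original proof splits into two parts: (a) checking that $\left(\mathcal{H}^0, F\right)$ with $F=\frac{G}{\pi}\int_0^\infty \lambda^{-\frac{1}{2}}(G^2+f+\lambda)^{-1}\,d\lambda$ is a well-defined Kasparov $(C_K(X),C^*_r(\Gamma))$-module whose class is independent of $f$, and (b) a cut-and-paste operator homotopy on the direct sum of the four modules, which is purely abstract Hilbert-module theory and never sees the underlying geometry. Only (a) needs to be re-examined in the Lipschitz category, so the paper re-proves precisely those lemmas --- convergence of the integral, compactness of $[D,R(\lambda)]$, of $h(F^2-I)$ and of $[F,h]$, all resting on Teleman's Rellich-type theorem and the compactness of multiplication by compactly supported Lipschitz functions --- and then quotes (b) verbatim from \cite{Bu}. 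If you want to keep your argument you must either add the invertibility-near-$Y$ hypothesis to the statement or first reduce to that case; the cleaner fix is to follow the paper's division of labour and let Bunke's abstract homotopy do the cutting and pasting.
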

	Here are two facts:
	\begin{itemize}
		\item  thanks to \cite[Theorem 7.1]{Tel1} we have the following Rellich-type result: 
		the inclusion $\mathcal{H}^2\hookrightarrow\mathcal{H}^0$ is compact;
		\item for any $f$ Lipschitz function compactly supported on $X$, the multiplication operator
		$f\colon \mathcal{H}^2\to \mathcal{H}^0$ is compact. 
		And this also holds for the Clifford multiplication by $\mathrm{grad}(f)$, the gradient of $f$.
	\end{itemize}
	
	Let $R(\lambda)$ be the bounded operator $(G^2+f+\lambda)^{-1}\in \mathbb{B}(\mathcal{H}^0,\mathcal{H}^2)$, for $\lambda\geq0$; 
	because of the Rellich-type result, we know that $R(\lambda)$ defines a compact operator in $\mathbb{B}(\mathcal{H}^0)$ and that there is a positive constant 
	$C$ such that $||R(\lambda)||\leq (C+\lambda)^{-1}$.
	
	\begin{lemma}
		The integral \[F= \frac{G}{\pi}\int_0^\infty \lambda^{-\frac{1}{2}}R(\lambda)d\lambda\] is convergent
		and defines an operator in $\mathbb{B}(\mathcal{H}^0)$.
	\end{lemma}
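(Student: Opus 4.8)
The plan is to recognise $F$ as $G\,(G^2+f)^{-1/2}$ and to exploit the positivity of $f$. First I would record the elementary scalar identity
\[
\frac{1}{\pi}\int_0^\infty \lambda^{-1/2}(a+\lambda)^{-1}\,d\lambda \;=\; a^{-1/2},\qquad a>0,
\]
which follows from the substitution $\lambda=a u^2$ together with $\int_0^\infty (1+u^2)^{-1}\,du=\pi/2$. Combined with the estimate $\|R(\lambda)\|\leq (C+\lambda)^{-1}$ recalled just before the lemma (valid also at $\lambda=0$, so $\|R(0)\|\leq C^{-1}$), this shows that
\[
B:=\frac{1}{\pi}\int_0^\infty \lambda^{-1/2}R(\lambda)\,d\lambda
\]
is an absolutely norm-convergent Bochner integral with $\|B\|\leq C^{-1/2}$: the integrand $\lambda^{-1/2}\|R(\lambda)\|$ is integrable near $0$ because $\lambda^{-1/2}$ is, and near $\infty$ because it is $O(\lambda^{-3/2})$.

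Next I would apply the bounded functional calculus for the self-adjoint regular operator $A:=G^2+f$ on the Hilbert $C^*(\Gamma)$-module $\mathcal{H}^0$. Since $(G^2+f)^{-1}\in\mathbb{B}(\mathcal{H}^0,\mathcal{H}^2)$, the spectrum of $A$ is contained in $[C,\infty)$, so each function $t\mapsto \lambda^{-1/2}(t+\lambda)^{-1}$ lies in $C_0(\mathrm{spec}(A))$ and $\int_0^\infty \lambda^{-1/2}(t+\lambda)^{-1}\,d\lambda=\pi\,t^{-1/2}$ uniformly on $\mathrm{spec}(A)$; as the functional calculus is an isometric $*$-homomorphism it commutes with the norm-convergent integral, whence $B=A^{-1/2}$. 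In particular $B$ is a positive bounded operator with $\mathrm{ran}(B)=\mathrm{dom}(A^{1/2})$, and because $f$ is bounded the form domain of $A=G^2+f$ coincides with that of $G^2$, i.e.\ $\mathrm{dom}(A^{1/2})=\mathrm{dom}(|G|)=\mathrm{dom}(G)$. Hence $F=GB$ is everywhere defined on $\mathcal{H}^0$; being the composition of the closed operator $G$ with the bounded operator $B$ whose range lies in $\mathrm{dom}(G)$, it is closed, hence bounded by the closed graph theorem. Finally, evaluating the closed quadratic form $\eta\mapsto\|G\eta\|^2$ and using $G=G^*$ together with $A-G^2=f\geq 0$,
\[
\|F\xi\|^2 \;=\; \|A^{1/2}B\xi\|^2-\langle B\xi,\, f\,B\xi\rangle \;\leq\; \|A^{1/2}A^{-1/2}\xi\|^2 \;=\; \|\xi\|^2,
\]
so $\|F\|\leq 1$.

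I expect the only genuinely delicate point to be the interaction of the unbounded operator $G$ with the integral: one must not integrate $\lambda^{-1/2}GR(\lambda)$ directly — this integrand is only $O(\lambda^{-1})$ at infinity and the resulting integral does not converge in norm — but keep $G$ outside, form the bounded operator $B=A^{-1/2}$ first, and then control $GB$ by the closed-graph argument together with the identification $\mathrm{dom}(A^{1/2})=\mathrm{dom}(G)$. This last equality is the Kato–square–root statement for regular self-adjoint operators on Hilbert $C^*$-modules, and it rests on the fact that perturbing $G^2$ by the bounded positive operator $f$ does not change its form domain. The remaining ingredients — the scalar integral, the resolvent bound, and the functional-calculus manipulation — are routine, so the lemma reduces to this domain bookkeeping.
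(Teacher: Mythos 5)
Your argument is correct. Note first that the paper itself gives no proof of this lemma: unlike its neighbours it is not even accompanied by a citation to \cite{Bu}, but it is clearly meant to be taken over from Bunke's relative index theorem paper, so there is no internal argument to compare against. Your route --- absolute norm-convergence of $B=\frac{1}{\pi}\int_0^\infty\lambda^{-1/2}R(\lambda)\,d\lambda$ from the resolvent bound, identification $B=(G^2+f)^{-1/2}$ by continuous functional calculus (the spectrum of $G^2+f$ lies in $[C,\infty)$ and the scalar integrals converge uniformly there), and then boundedness of $F=G(G^2+f)^{-1/2}$ via $\langle FB^{-1}\eta,FB^{-1}\eta\rangle=\langle B^{-1}\eta,B^{-1}\eta\rangle-\langle \eta,f\eta\rangle$ for $\eta$ in the domain of $G^2+f$ --- is sound, and you correctly isolate the genuine danger: $\lambda^{-1/2}\|GR(\lambda)\|$ is only $O(\lambda^{-1})$ at infinity, so one cannot pull $G$ under the integral and argue by absolute convergence. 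Two small points should be tightened. First, in the Hilbert-module setting $\mathbb{B}(\mathcal{H}^0)$ means \emph{adjointable} operators, and a bounded everywhere-defined module map need not be adjointable; here this is harmless, since the same estimate bounds $(G^2+f)^{-1/2}G$ on $\mathrm{dom}(G)$ and its closure provides an adjoint for $F$, but a sentence to that effect is needed. Second, the equality $\mathrm{dom}((G^2+f)^{1/2})=\mathrm{dom}(G)$ deserves its one-line justification: on the common core $\mathrm{dom}(G^2)$ one has $\|G\xi\|^2\le\|(G^2+f)^{1/2}\xi\|^2\le\|G\xi\|^2+\|f\|_\infty\|\xi\|^2$, so the graph norms are equivalent and the closures have the same domain (in fact only the inclusion $\mathrm{dom}((G^2+f)^{1/2})\subseteq\mathrm{dom}(G)$ is used). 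With these additions your proof is complete and self-contained, which is more than the paper supplies.
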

	
	\begin{lemma}\label{comm}
		The operator $[D,R(\lambda)]$ extends to a bounded operator that coincides with \[-R(\lambda)\mathrm{grad}(f)R(\lambda).\] 
		Moreover such an operator is compact.
	\end{lemma}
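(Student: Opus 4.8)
The plan is to reduce the statement to the elementary resolvent identity together with the one genuinely Lipschitz input: multiplication by a compactly supported Lipschitz function has a bounded commutator with the signature operator. Write $B(\lambda)=G^2+f+\lambda$, so $R(\lambda)=B(\lambda)^{-1}\in\mathbb{B}(\mathcal{H}^0,\mathcal{H}^2)$. On a common core for $D$ and $G^2$ one has the algebraic identity
\[
D\,B(\lambda)-B(\lambda)\,D=[D,G^2]+[D,f]=[D,f],
\]
because $D=G$ commutes with the Borel function $G^{2}$ of itself and with scalars. Multiplying this identity by $R(\lambda)$ on either side and using $B(\lambda)R(\lambda)=R(\lambda)B(\lambda)=1$ gives, after the obvious rearrangement,
\[
[D,R(\lambda)]=D\,R(\lambda)-R(\lambda)\,D=-R(\lambda)\,[D,f]\,R(\lambda).
\]
Since $R(\lambda)$ maps $\mathcal{H}^0$ into $\mathcal{H}^2\subset\mathrm{dom}(D)$, the operator $D\,R(\lambda)$ is everywhere defined and bounded, and the right-hand side is bounded; hence $[D,R(\lambda)]$, a priori defined only on $\mathrm{dom}(D)$, extends to the bounded operator $-R(\lambda)\,[D,f]\,R(\lambda)$ on all of $\mathcal{H}^0$.

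It remains to identify $[D,f]$ with $\mathrm{grad}(f)$ and to check compactness. The identification is exactly where the Lipschitz framework enters: in the Teleman--Hilsum construction of the $L^2$ signature complex, multiplication by a Lipschitz function preserves $\mathrm{dom}(D)$ and the commutator $[D,f]$ extends from $\mathrm{dom}(D)$ to a bounded operator of order zero, which in the de Rham picture is the (bounded, $L^\infty$-coefficient) exterior-plus-interior multiplication by $df$ that the statement denotes $\mathrm{grad}(f)$; see \cite{Hil1,Tel1}. This is the same feature that makes the signature complex a Kasparov module against $C(Z)$ and that makes $C_K(X)$ act, so nothing new is needed. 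With this we obtain $[D,R(\lambda)]=-R(\lambda)\,\mathrm{grad}(f)\,R(\lambda)$. If one keeps track of a bounded perturbation inside $G$, the same computation produces an extra term $-R(\lambda)[C,f]R(\lambda)$ with $[C,f]$ bounded; this does not affect the compactness statement, and in the applications of the lemma the perturbation is supported away from $\mathrm{supp}(f)$, so the formula is unchanged.

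For compactness, recall (as noted just before the statement) that $R(\lambda)$ is a \emph{compact} element of $\mathbb{B}(\mathcal{H}^0)$, a consequence of the Rellich-type compactness $\mathcal{H}^2\hookrightarrow\mathcal{H}^0$ from \cite[Theorem 7.1]{Tel1}; since $\mathrm{grad}(f)$ is bounded on $\mathcal{H}^0$, the product $-R(\lambda)\,\mathrm{grad}(f)\,R(\lambda)$ is compact. Alternatively one may invoke the stronger fact recalled there, that $\mathrm{grad}(f)\colon\mathcal{H}^2\to\mathcal{H}^0$ is already compact, together with the boundedness of $R(\lambda)\colon\mathcal{H}^0\to\mathcal{H}^2$. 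I expect the only delicate point to be the domain bookkeeping in the first paragraph — making the unbounded-operator commutator identity rigorous in the Hilbert $C^*$-module setting and invoking correctly the Lipschitz fact that a Lipschitz multiplier preserves $\mathrm{dom}(D)$ with bounded commutator; no analytic estimate beyond what has already been recalled is required.
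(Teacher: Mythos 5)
Your argument is correct and is essentially the proof the paper relies on: the paper's own ``proof'' is just a pointer to Bunke's Lemmas 1.7 and 1.8, and those consist of exactly your resolvent-identity computation $[D,R(\lambda)]=-R(\lambda)[D,f]R(\lambda)$, the identification of $[D,f]$ with Clifford multiplication by $df$ (the genuinely Lipschitz input from Teleman--Hilsum), and compactness via the Rellich-type embedding $\mathcal{H}^2\hookrightarrow\mathcal{H}^0$. Your closing remark about the bounded perturbation inside $G$ correctly flags the one place where the stated formula is only true up to an additional compact term, which is harmless for every subsequent use of the lemma.
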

	\begin{proof}
		See \cite[Lemma 1.7 and Lemma 1.8]{Bu}.
	\end{proof}
	
	\begin{lemma}
		For any $h\in C_K(X)$, $h(F^2-I)\in \mathbb{K}(\mathcal{H}^0)$.
	\end{lemma}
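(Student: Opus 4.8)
The plan is to identify $F$ with the bounded transform $G(G^2+f)^{-1/2}$ and to read off $F^2-I$ from that expression. First I would record the elementary identity $\tfrac1\pi\int_0^\infty\lambda^{-1/2}(t+\lambda)^{-1}\,d\lambda=t^{-1/2}$ for $t>0$ (put $\lambda=tu$ and use $\int_0^\infty u^{-1/2}(1+u)^{-1}\,du=\pi$). Since $P:=G^2+f$ is positive, self-adjoint, and satisfies $P\geq cI$ for some $c>0$ --- this is precisely the standing hypothesis that $(G^2+f)^{-1}$ is bounded --- the spectral theorem for $P$ together with a routine Fubini argument (the inner $\lambda$-integral is the bounded function $t\mapsto\pi t^{-1/2}$ on $\sigma(P)\subset[c,\infty)$) gives that $\Phi:=\tfrac1\pi\int_0^\infty\lambda^{-1/2}R(\lambda)\,d\lambda$, which converges absolutely in norm because $\|R(\lambda)\|\leq(C+\lambda)^{-1}$, equals $P^{-1/2}=(G^2+f)^{-1/2}$. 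Hence $F=G\Phi$ (bounded by the previous lemma) and, since $G=G^*$ and $\Phi=\Phi^*$, also $F^*=\Phi G$.

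Next I would compute, modulo the compact operators $\mathbb{K}(\mathcal{H}^0)$, the defects $F-F^*$ and $FF^*-I$, and then assemble $F^2-I$. For the symmetry defect: on the core $\mathrm{dom}(G)$ one has $\Phi G=G\Phi-[G,\Phi]$, so $F-F^*=[G,\Phi]$; feeding the integral representation of $\Phi$ into Lemma~\ref{comm} gives that $[G,\Phi]$ extends to $-\tfrac1\pi\int_0^\infty\lambda^{-1/2}R(\lambda)\,\mathrm{grad}(f)\,R(\lambda)\,d\lambda$, an operator-norm convergent integral of compact operators (each integrand is compact by Lemma~\ref{comm}, and its norm is $O((C+\lambda)^{-2})$ since $f$ is Lipschitz and $\|R(\lambda)\|\leq(C+\lambda)^{-1}$), hence itself compact. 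For $FF^*$: since $\Phi^2=R(0)$ one has $FF^*=G\,R(0)\,G$, and moving one $G$ across $R(0)$ and using $G^2R(0)=I-fR(0)$ yields $FF^*=I-fR(0)-G[G,R(0)]$; here $fR(0)$ is compact because $f\colon\mathcal{H}^2\to\mathcal{H}^0$ is compact and $R(0)\colon\mathcal{H}^0\to\mathcal{H}^2$ is bounded, and $G[G,R(0)]=-GR(0)\,\mathrm{grad}(f)\,R(0)$ is compact because $GR(0)$ is bounded (with $\|GR(0)\|\leq c^{-1/2}$) while $\mathrm{grad}(f)\,R(0)$ is compact by the same Rellich-type facts. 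Thus $FF^*-I\in\mathbb{K}(\mathcal{H}^0)$, and therefore $F^2-I=F(F-F^*)+(FF^*-I)$ is a bounded operator times a compact one plus a compact one, hence compact. In particular $h(F^2-I)\in\mathbb{K}(\mathcal{H}^0)$ for every $h\in C_K(X)$; the restriction to $C_K(X)$ (as opposed to all bounded functions on $X$) plays no role here, it is only the form in which the statement is used when framing $(\mathcal{H}^0,F)$ as a Kasparov $(C_K(X),C^*(\Gamma))$-module.

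The step I expect to be the main obstacle is the convergence-and-compactness claim for the commutator integral in $F-F^*$. One cannot estimate $G\Phi$ and $\Phi G$ separately: the sharpest bound, $\|GR(\lambda)\|=O(\lambda^{-1/2})$, only barely fails to be integrable against $\lambda^{-1/2}\,d\lambda$, so the argument must exploit the cancellation in $[G,R(\lambda)]=-R(\lambda)\,\mathrm{grad}(f)\,R(\lambda)$ from Lemma~\ref{comm}, whose norm decays like $(C+\lambda)^{-2}$. Concretely I would form $\int_0^T\lambda^{-1/2}\bigl(GR(\lambda)-R(\lambda)G\bigr)\,d\lambda=\int_0^T\lambda^{-1/2}[G,R(\lambda)]\,d\lambda$, let $T\to\infty$, and use the closedness of $G$ together with $\tfrac1\pi\int_0^T\lambda^{-1/2}R(\lambda)\,d\lambda\to\Phi$ to conclude that the left side converges to $[G,\Phi]$ and equals the absolutely convergent right-hand integral. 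Everything else is a routine transcription of Bunke's argument in \cite{Bu} to the Lipschitz setting; in particular the bounded perturbation of the signature operator concealed in $G$ contributes only an extra term of the form $-R(\lambda)[A,f]R(\lambda)$ which, $A$ being $C^*$-module compact by the earlier proposition, is harmless for all the estimates above.
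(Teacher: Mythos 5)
Your proof is correct and follows essentially the same route as the paper's: both rest on the identity $\tfrac{1}{\pi}\int_0^\infty\lambda^{-1/2}R(\lambda)\,d\lambda=(G^2+f)^{-1/2}$, the commutator formula of Lemma~\ref{comm} to move $G$ past the integral at the cost of a compact error, and the compactness of multiplication by $f$ composed with $R(0)$ to handle $G^2(G^2+f)^{-1}-I=-f(G^2+f)^{-1}$. Your decomposition $F^2-I=F(F-F^*)+(FF^*-I)$ is just a rebracketing of the paper's direct expansion $F^2=G^2\Phi^2+G[\Phi,G]\Phi$, and the extra care you take with the norm-convergence of the commutator integral and with the bounded perturbation hidden in $G$ only supplies details the paper leaves implicit.
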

	\begin{proof}
		We have 
		\begin{equation*}
			\begin{split}
				&\left(\frac{G}{\pi}\int_0^\infty \lambda^{-\frac{1}{2}}R(\lambda)d\lambda \right)\left(\frac{G}{\pi}\int_0^\infty \lambda^{-\frac{1}{2}}R(\lambda)d\lambda\right)=\\
				&\frac{G^2}{\pi^2}\left(\int_0^\infty \lambda^{-\frac{1}{2}}R(\lambda)d\lambda\right)^2+\frac{G}{\pi}\left[\int_0^\infty \lambda^{-\frac{1}{2}}R(\lambda)d\lambda,\frac{G}{\pi}\right]\int_0^\infty \lambda^{-\frac{1}{2}}R(\lambda)d\lambda=\\
				&\frac{G^2}{\pi^2}\left(G^2+f\right)^{-1}-\frac{G}{\pi}\int_0^\infty \lambda^{-\frac{1}{2}}R(\lambda)\mathrm{grad}(f)R(\lambda)d\lambda \int_0^\infty \lambda^{-\frac{1}{2}}R(\lambda)d\lambda\sim\\
				&\frac{G^2}{\pi^2}\left(G^2+f\right)^{-1},
			\end{split}
		\end{equation*}
		where in the third step we have used Lemma \ref{comm}. Here $\sim$ means ``equal modulo compacts''.
		Hence 
		\[
		h(F^2-I)\sim h\frac{f}{G^2+f}
		\]
		that is compact, since the multiplication by $f$ is compact.
	\end{proof}
	
	\begin{lemma}
		For any $h\in C_K(X)$, $[F,h]\in \mathbb{K}(\mathcal{H}^0)$.
	\end{lemma}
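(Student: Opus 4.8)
The plan is to follow the line of argument of \cite{Bu} in the Lipschitz setting, after rewriting $F$ through the integral representation $(G^2+f)^{-1/2}=\frac1\pi\int_0^\infty\lambda^{-1/2}R(\lambda)\,d\lambda$ (which converges in norm since $\|R(\lambda)\|\le(C+\lambda)^{-1}$), so that $F=G(G^2+f)^{-1/2}$ is the bounded operator of the previous lemma. First I would reduce to $h\in\mathrm{Lip}_K(X)$: since $C_K(X)$ is by definition the sup-norm closure of $\mathrm{Lip}_K(X)$, since $h\mapsto[F,h]$ is norm continuous (indeed $\|[F,h]\|\le 2\|F\|\,\|h\|_\infty$), and since $\mathbb{K}(\mathcal{H}^0)$ is norm closed, it suffices to prove the claim for Lipschitz $h$. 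For such $h$ the differential $dh$ is defined a.e.\ and lies in $L^\infty$, so $[G,h]=c(dh)$ is a bounded operator (Clifford multiplication by $dh$).

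Next I would split
\[
[F,h]=[G,h]\,(G^2+f)^{-1/2}+G\bigl[(G^2+f)^{-1/2},h\bigr].
\]
The first summand is compact because $(G^2+f)^{-1/2}$ is: $(G^2+f)^{-1}\in\mathbb{B}(\mathcal{H}^0,\mathcal{H}^2)$ by hypothesis, the inclusion $\mathcal{H}^2\hookrightarrow\mathcal{H}^0$ is compact by Teleman's Rellich-type result, hence $(G^2+f)^{-1}$ and its square root are compact on $\mathcal{H}^0$, and $c(dh)(G^2+f)^{-1/2}$ is then bounded composed with compact. For the second summand I would invoke the resolvent identity $[R(\lambda),h]=-R(\lambda)[G^2,h]R(\lambda)$ together with $[G^2,h]=Gc(dh)+c(dh)G$ and, pulling $G$ inside the (absolutely convergent) integral — legitimate because $G$ is closed — rewrite it as
\[
-\frac1\pi\int_0^\infty\lambda^{-1/2}\Bigl(\bigl(GR(\lambda)G\bigr)\,c(dh)\,R(\lambda)+\bigl(GR(\lambda)\bigr)\,c(dh)\,\bigl(GR(\lambda)\bigr)\Bigr)\,d\lambda .
\]
The bounds needed are elementary and use only $f\ge 0$: from $R(\lambda)G^2R(\lambda)\le R(\lambda)(G^2+f+\lambda)R(\lambda)=R(\lambda)$ one gets $\|GR(\lambda)\|\le(C+\lambda)^{-1/2}$, and from $R(\lambda)\le(G^2+\lambda)^{-1}$ one gets $GR(\lambda)G\le G^2(G^2+\lambda)^{-1}\le I$, i.e.\ $\|GR(\lambda)G\|\le 1$; together with $\|R(\lambda)\|\le(C+\lambda)^{-1}$ this makes every integrand of operator norm $O\bigl((C+\lambda)^{-1}\bigr)$, with implied constant a multiple of $\mathrm{Lip}(h)$, so the integral converges in operator norm, and since each integrand carries a compact factor ($R(\lambda)$, or $GR(\lambda)$, which is compact since $R(\lambda)$ is) it is a norm limit of compact operators, hence compact. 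Combining the two summands gives $[F,h]\in\mathbb{K}(\mathcal{H}^0)$ for $h\in\mathrm{Lip}_K(X)$, and then for every $h\in C_K(X)$ by the density argument. The two facts recalled after Theorem \ref{bunke} enter precisely through the compactness of $\mathcal{H}^2\hookrightarrow\mathcal{H}^0$ and of compactly supported multiplications.

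The step I expect to be the genuine obstacle is the one where the Lipschitz category bites, namely the legitimacy of the manipulations in the second summand. In the smooth case one can use $[G^2,h]=2c(dh)G+[G,[G,h]]$ with $[G,[G,h]]$ a bounded zeroth-order operator; for merely Lipschitz $h$ this fails, since $[G,c(dh)]$ would involve the distributional Hessian of $h$. One must therefore retain the factorisation $[G^2,h]=Gc(dh)+c(dh)G$ and, crucially, group the integrand as $\bigl(GR(\lambda)G\bigr)c(dh)R(\lambda)$ rather than as $\bigl(GR(\lambda)\bigr)\bigl(Gc(dh)R(\lambda)\bigr)$ — the latter grouping only yields an $O((C+\lambda)^{-1/2})$ bound, which is not integrable against $\lambda^{-1/2}$. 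For this grouping, and for the resolvent identity itself, one needs to know that multiplication by a Lipschitz function maps $\mathcal{H}^2$ into $\mathcal{H}^2$ and that Clifford multiplication by $dh$ maps $\mathcal{H}^2$ into $\mathcal{H}^1=\mathrm{dom}(G)$; this is the point at which the finer analysis of Teleman and Hilsum of the Lipschitz signature complex (in particular the vanishing of $d(dh)$ as a current, which kills the singular part of $[G,c(dh)]$) has to be used. Once this regularity input is available, the $\lambda$-integrals and the compactness arguments are routine.
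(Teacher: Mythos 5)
Your argument is correct and is essentially the paper's own proof: the paper likewise writes $F$ via the integral $\frac{1}{\pi}\int_0^\infty\lambda^{-1/2}R(\lambda)\,d\lambda$, applies the Leibniz rule so that the $[D,h]=c(dh)$ term is absorbed into the compacts (using the same Rellich-type facts you invoke), and disposes of the remaining term $\frac{D}{\pi}\int_0^\infty\lambda^{-1/2}[R(\lambda),h]\,d\lambda$ by citing the proof of Lemma 1.12 of \cite{Bu}. The only difference is one of explicitness: you carry out the resolvent-identity computation and the $O((C+\lambda)^{-1})$ norm bounds that the paper delegates to \cite{Bu}, and you flag the domain/regularity issue for merely Lipschitz $h$ that the paper leaves implicit.
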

	\begin{proof}
		Since we chose  $G$ as a perturbation of the signature operator $D$ and since  the perturbation becomes compact under bounded transform,
		we have that
		\begin{equation*}
			\begin{split}
				[F,h]\sim&\left[\frac{D}{\pi}\int_0^\infty \lambda^{-\frac{1}{2}}R(\lambda)d\lambda,h\right]=\\
				&\frac{D}{\pi}\left[\int_0^\infty \lambda^{-\frac{1}{2}}R(\lambda)d\lambda,h\right]+\left[\frac{D}{\pi},h\right]\int_0^\infty \lambda^{-\frac{1}{2}}R(\lambda)d\lambda=\\
				&\frac{D}{\pi}\int_0^\infty \lambda^{-\frac{1}{2}}\left[R(\lambda),h\right]d\lambda+\mathrm{grad}(h)\int_0^\infty \lambda^{-\frac{1}{2}}R(\lambda)d\lambda\sim\\
				&\frac{D}{\pi}\int_0^\infty \lambda^{-\frac{1}{2}}\left[R(\lambda),h\right]d\lambda.
			\end{split}
		\end{equation*}
		The term in the last line is compact as in the proof of \cite[Lemma 1.12]{Bu}.
	\end{proof}
	
	\begin{lemma}
		Let $f$ and $f_1$ be two positive and compactly supported Lipschitz functions such that 
		$(G^2+f)^{-1},(G^2+f_1)^{-1}\in\mathbb{B}(\mathcal{H}^0,\mathcal{H}^2)$.
		Then the two associated operators $F,F_1$  differ each other by a compact operator.
	\end{lemma}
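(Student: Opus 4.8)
The plan is to show that the family $F_s$ obtained by interpolating between the data $f$ and $f_1$ is a norm-continuous path of bounded operators on $\mathcal{H}^0$, each of which is a "right" Fredholm perturbation (i.e.\ satisfies $F_s^2 - I$ and $[F_s,h]$ compact for all $h\in C_K(X)$), so that the difference $F - F_1$ lies in the ideal of compacts. First I would set $f_r = (1-r) f + r f_1$ for $r\in[0,1]$: this is again a positive compactly supported Lipschitz function, and I would verify that $(G^2 + f_r)^{-1}\in\mathbb{B}(\mathcal{H}^0,\mathcal{H}^2)$. The point here is that the set of $\lambda\geq 0$ for which $G^2 + f_r + \lambda$ is invertible with bounded inverse $\mathcal{H}^0\to\mathcal{H}^2$ is nonempty for all $r$ (for $\lambda$ large it holds automatically since $G^2$ is regular and $f_r$ is bounded), and the hypothesis at $r=0,1$ together with the resolvent identity $(G^2+f_r+\lambda)^{-1} - (G^2+f+\lambda)^{-1} = -(G^2+f_r+\lambda)^{-1}(f_r - f)(G^2+f+\lambda)^{-1}$ and the compactness of multiplication by the Lipschitz function $f_r - f$ as an operator $\mathcal{H}^2\to\mathcal{H}^0$ (second bulleted fact after Theorem \ref{bunke}) lets one push the invertibility down to $\lambda = 0$.

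Next I would define $R_r(\lambda) = (G^2 + f_r + \lambda)^{-1}$ and $F_r = \tfrac{G}{\pi}\int_0^\infty \lambda^{-1/2} R_r(\lambda)\, d\lambda$, exactly as in the preceding lemmas; the estimate $\|R_r(\lambda)\|\leq (C_r + \lambda)^{-1}$ (uniform in $r$ on a compact parameter interval, once the invertibility at $\lambda = 0$ is secured) guarantees convergence of the integral and shows $F_r\in\mathbb{B}(\mathcal{H}^0)$. The main estimate is then the difference
\[
F_{r} - F = \frac{G}{\pi}\int_0^\infty \lambda^{-1/2}\bigl(R_{r}(\lambda) - R(\lambda)\bigr)\, d\lambda
= -\frac{G}{\pi}\int_0^\infty \lambda^{-1/2} R_{r}(\lambda)(f_r - f) R(\lambda)\, d\lambda.
\]
Here $R(\lambda)\colon \mathcal{H}^0\to\mathcal{H}^2$ is bounded with norm $O((C+\lambda)^{-1})$, multiplication by the Lipschitz function $f_r - f$ is a compact operator $\mathcal{H}^2\to\mathcal{H}^0$, and $R_r(\lambda)\colon\mathcal{H}^0\to\mathcal{H}^0$ is compact with norm $O((C_r+\lambda)^{-1})$; hence each integrand $\lambda^{-1/2} R_r(\lambda)(f_r-f)R(\lambda)$ is compact with operator norm bounded by a fixed integrable function of $\lambda$, so the Bochner integral converges in the operator norm to a compact operator, and applying the bounded operator $G/\pi$ keeps it compact (the factor $G$ is absorbed since $G R(\lambda)$ and $G R_r(\lambda)$ are bounded, as in the previous lemmas). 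Taking $r = 1$ gives $F - F_1\in\mathbb{K}(\mathcal{H}^0)$.

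The step I expect to be the main obstacle is the very first one: establishing that $(G^2+f_r)^{-1}$ is a bounded operator $\mathcal{H}^0\to\mathcal{H}^2$ for every intermediate $r$, rather than merely for the endpoints. Interpolating the two hypotheses is not purely formal because $G$ is only a regular (unbounded) operator, not Fredholm on the nose, and $f_r$ need not be comparable to $f$ or to $f_1$ pointwise in a way that makes the quadratic forms comparable. I would handle this by a continuity-and-openness argument in $\lambda$: the set $\Lambda_r = \{\lambda\geq 0 : G^2+f_r+\lambda \text{ invertible } \mathcal{H}^0\to\mathcal{H}^2\}$ is open, contains a half-line $[\Lambda_0,\infty)$ uniformly in $r$, and is also closed in $[0,\infty)$ because a norm limit of the uniformly-bounded inverses $R_r(\lambda)$ remains a bounded two-sided inverse (using that $\mathcal{H}^2\hookrightarrow\mathcal{H}^0$ is compact, from the Rellich-type statement \cite[Theorem 7.1]{Tel1}, to control the lower bound). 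Once $0\in\Lambda_r$ for all $r$, everything else is the routine estimate above; one could alternatively bypass the path entirely and simply apply the resolvent identity directly at $r=1$, $\lambda=0$, deriving $F - F_1 = -\tfrac{G}{\pi}\int_0^\infty\lambda^{-1/2}R_1(\lambda)(f_1-f)R(\lambda)\,d\lambda$ and invoking compactness of $f_1-f\colon\mathcal{H}^2\to\mathcal{H}^0$ as before — this is in fact the cleanest route and avoids the interpolation issue altogether.
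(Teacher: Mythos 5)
Your proposal is correct, and the final ``cleanest route'' you describe is exactly the argument behind the paper's proof, which consists solely of the citation ``See [Lemma 1.10]{Bu}'': Bunke's lemma is proved by the resolvent identity $R(\lambda)-R_1(\lambda)=R(\lambda)(f_1-f)R_1(\lambda)$, the compactness of multiplication by the compactly supported Lipschitz function $f_1-f$ as an operator $\mathcal{H}^2\to\mathcal{H}^0$ (the second bulleted fact after Theorem \ref{bunke}), and the norm estimates $\|GR(\lambda)\|\leq(C+\lambda)^{-1/2}$, $\|R_1(\lambda)\|\leq(C_1+\lambda)^{-1}$, which make the integrand $\lambda^{-1/2}GR(\lambda)(f_1-f)R_1(\lambda)$ a compact operator with integrable norm, so the norm-convergent integral is compact. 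The only part of your write-up I would strike is the interpolation preamble: establishing $(G^2+f_r)^{-1}\in\mathbb{B}(\mathcal{H}^0,\mathcal{H}^2)$ for intermediate $r$ is the one step whose justification is genuinely shaky (the closedness of your set $\Lambda_r$ in $[0,\infty)$ is asserted rather than proved, and a norm limit of inverses need not a priori land in $\mathbb{B}(\mathcal{H}^0,\mathcal{H}^2)$ with the required two-sided inverse property at $\lambda=0$), and as you yourself observe it is entirely unnecessary once you apply the resolvent identity directly to the two given endpoints.
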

	\begin{proof}
		See \cite[Lemma 1.10]{Bu}.
	\end{proof}

	The lemmas we presented yield to the following result.
	\begin{proposition}
		The pair $(\mathcal{H}^0,F)$ defines a Kasparov $(C_K(X),C^*_r(\Gamma))$-module and its class in $KK(C_K(X),C^*_r(\Gamma))$ does not depend 
		on the choice of $f$.
	\end{proposition}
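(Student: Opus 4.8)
The plan is to deduce the statement directly from the lemmas established above by checking the Kasparov axioms one at a time; the argument will run parallel to \cite{Bu}, the only genuinely ``Lipschitz'' input being the compactness facts already recorded — the Rellich-type inclusion $\mathcal{H}^2\hookrightarrow\mathcal{H}^0$, the compactness of multiplication by a compactly supported Lipschitz function $\mathcal{H}^2\to\mathcal{H}^0$, and the compactness of Clifford multiplication by its gradient. Concretely, I would fix the data as follows: $\mathcal{H}^0=L^2(X,\Lambda_\CC(X)\otimes\mathcal{F})$ is a Hilbert $C^*_r(\Gamma)$-module carrying the natural $\ZZ/2$-grading by parity of form degree, and $\phi\colon C_K(X)\to\mathbb{B}(\mathcal{H}^0)$, $h\mapsto\mu_h$, is the even representation by multiplication operators; it is a well-defined $*$-homomorphism because $C_K(X)\subset C_b(X)$. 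By the convergence lemma $F\in\mathbb{B}(\mathcal{H}^0)$, and $F$ is odd with respect to the grading since $G$ is. It then remains to check, for every $h\in C_K(X)$: (i) $\phi(h)(F^2-1)\in\mathbb{K}(\mathcal{H}^0)$; (ii) $[\phi(h),F]\in\mathbb{K}(\mathcal{H}^0)$; (iii) $\phi(h)(F-F^*)\in\mathbb{K}(\mathcal{H}^0)$.

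Conditions (i) and (ii) are exactly the two lemmas proved just above. For (iii) I would show the stronger statement that $F-F^*$ is itself compact. Using the subordination identity $\tfrac1\pi\int_0^\infty\lambda^{-1/2}(a+\lambda)^{-1}\,d\lambda=a^{-1/2}$ for the positive regular operator $a=G^2+f$, one has $F=G(G^2+f)^{-1/2}$; since $(G^2+f)^{-1/2}$ is self-adjoint while $G$ is self-adjoint modulo the bounded perturbation (which becomes compact under bounded transform, exactly as was used in the proof of the lemma on $[F,h]$), $F-F^*$ coincides with $[G,(G^2+f)^{-1/2}]$ modulo a compact operator. Commuting $G$ past the norm-convergent integral gives $[G,(G^2+f)^{-1/2}]=\tfrac1\pi\int_0^\infty\lambda^{-1/2}[G,R(\lambda)]\,d\lambda$, and $[G,R(\lambda)]=-R(\lambda)\,\mathrm{grad}(f)\,R(\lambda)$ as in Lemma \ref{comm}; each integrand is compact because Clifford multiplication by $\mathrm{grad}(f)$ is compact from $\mathcal{H}^2$ to $\mathcal{H}^0$, and the estimate $\|R(\lambda)\|\le (C+\lambda)^{-1}$ makes the integral norm-convergent with compact value. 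Hence $F-F^*\in\mathbb{K}(\mathcal{H}^0)$, so (iii) holds a fortiori, and $(\mathcal{H}^0,\phi,F)$ is a Kasparov $(C_K(X),C^*_r(\Gamma))$-module.

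For the independence of the class on $f$, I would invoke the comparison lemma: if $f,f_1$ are two positive compactly supported Lipschitz functions as in the hypotheses, with associated operators $F,F_1$, then $F-F_1\in\mathbb{K}(\mathcal{H}^0)$, so the affine path $F_s:=(1-s)F+sF_1$ satisfies $F_s-F\in\mathbb{K}(\mathcal{H}^0)$ for every $s\in[0,1]$. Conditions (i)--(iii) are stable under compact perturbations of the operator — e.g. $F_s^2-F^2=(F_s-F)F_s+F(F_s-F)$, and similarly for the commutator and the self-adjointness defect — so every $(\mathcal{H}^0,\phi,F_s)$ is a Kasparov module and $s\mapsto F_s$ is an operator homotopy; therefore $[\mathcal{H}^0,\phi,F]=[\mathcal{H}^0,\phi,F_1]$ in $KK(C_K(X),C^*_r(\Gamma))$. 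The point requiring the most care, and where I expect the technical work to concentrate, is step (iii): justifying rigorously the interchange of the unbounded operator $G$ with the subordination integral on the appropriate domains, and controlling the non-self-adjoint bounded perturbation of the signature operator under the bounded transform, so that $F-F^*$ is genuinely compact rather than merely bounded. Everything else is the formal Kasparov-theoretic bookkeeping of \cite{Bu}, legitimate in the Lipschitz category thanks to the Rellich-type theorem of \cite{Tel1}.
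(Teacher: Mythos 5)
Your proposal is correct and follows exactly the route the paper intends: the paper gives no written proof beyond the sentence ``the lemmas we presented yield the following result,'' and your argument is precisely the expansion of that remark --- conditions (i) and (ii) are the two compactness lemmas, independence of $f$ is the comparison lemma plus the standard operator-homotopy through compact perturbations. The only content you add is the explicit verification of (iii) via the subordination identity and Lemma \ref{comm}, which the paper leaves implicit (it is Bunke's Lemma 1.9 in the smooth case), and your treatment of it is sound.
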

	
	After checking this technical part, the proof of Theorem \ref{bunke}
	is completely abstract and it follows in the Lipschitz case as in the smooth one.

	\bigskip
	
	\bigskip
	
	\bigskip

	Now we treat another fundamental result proved by Piazza ans Schick: the delocalized Atiyah-Patodi-Singer index theorem.
	As noticed in \cite[Section 5.2]{PS2}, 
	the proof of the delocalized APS index theorem is based on  abstract functional analysis  for unbounded operators on Hilbert spaces.
	The reader can check that it almost completely works  in the Lipschitz context as well as in the smooth one and we will not give   all the details again.
	
	The only proof to be modified is \cite[Prop 5.33]{PS2}.  Assume the context and the notation understood, then we state the following Proposition.
	\begin{proposition}
		Given a Dirac type operator $D$, the operator $(1 + D^2)^{-1}\colon L^2 \to H^2$ is a norm limit of finite propagation operators $G_n \colon L^2 \to H^2$ with the property that $[\varphi,G_n]\colon L^2 \to H^2$  is compact for
		any compactly supported continuous function on $M$.
	\end{proposition}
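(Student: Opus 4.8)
The plan is to transplant the proof of \cite[Prop.\ 5.33]{PS2} to the Lipschitz setting, using the wave group of $D$ in place of a pseudodifferential parametrix and feeding in Hilsum's finite propagation estimate (Theorem \ref{Hil}) together with the Rellich-type compactness statements recalled after Theorem \ref{bunke}. Write $D$ for the self-adjoint operator in question and start from the elementary identity $\tfrac{1}{1+x^{2}}=\tfrac12\int_{\RR}e^{-|t|}e^{itx}\,dt$, which by the spectral theorem gives $(1+D^{2})^{-1}=\tfrac12\int_{\RR}e^{-|t|}e^{itD}\,dt$. Fix an even $\chi\in C^{\infty}_{c}(\RR)$ with $0\le\chi\le1$, $\chi\equiv1$ on $[-1,1]$ and $\mathrm{supp}\,\chi\subset[-2,2]$, put $h_{n}(t)=\chi(t/n)$, and set
\[
G_{n}=\psi_{n}(D),\qquad \psi_{n}(x)=\tfrac12\int_{\RR}h_{n}(t)e^{-|t|}e^{itx}\,dt .
\]
Since $e^{itD}$ has propagation $\le|t|$ by Theorem \ref{Hil} and $h_{n}$ is supported in $[-2n,2n]$, the Bochner integral $G_{n}=\tfrac12\int h_{n}(t)e^{-|t|}e^{itD}\,dt$ has propagation $\le2n$, hence is a finite propagation operator; pseudolocality is as in Remark \ref{fp}. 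Because $\psi_{n}$ is real and $(1+x^{2})\psi_{n}(x)$ is bounded (see below), $G_{n}$ is a bounded self-adjoint operator which moreover maps $L^{2}$ into $H^{2}$.

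For the approximation, integrate by parts using $(1+x^{2})e^{itx}=(1-\partial_{t}^{2})e^{itx}$ and the distributional identity $(1-\partial_{t}^{2})\bigl(\tfrac12 e^{-|t|}\bigr)=\delta_{0}$: since $h_{n}(0)=1$, one gets $g_{n}(x):=(1+x^{2})\psi_{n}(x)=1+\widehat{\rho_{n}}(x)$, where $\rho_{n}(t)=(\partial_{t}h_{n})(t)\,\mathrm{sgn}(t)e^{-|t|}-\tfrac12(\partial_{t}^{2}h_{n})(t)e^{-|t|}$ is supported in $\{\,n\le|t|\le2n\,\}$. There $|\partial_{t}h_{n}|=O(1/n)$, $|\partial_{t}^{2}h_{n}|=O(1/n^{2})$ and $e^{-|t|}\le e^{-n}$, so $\|\rho_{n}\|_{L^{1}}\to0$ and hence $\|g_{n}-1\|_{\infty}\le\|\rho_{n}\|_{L^{1}}\to0$. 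Consequently $(1+D^{2})G_{n}=\mathrm{Id}+\widehat{\rho_{n}}(D)$ is bounded uniformly in $n$ (so that $\sup_{n}\|G_{n}\|_{L^{2}\to H^{2}}<\infty$), and
\[
\|G_{n}-(1+D^{2})^{-1}\|_{L^{2}\to H^{2}}=\|(1+D^{2})G_{n}-\mathrm{Id}\|_{L^{2}\to L^{2}}=\|\widehat{\rho_{n}}(D)\|\xrightarrow[n\to\infty]{}0 .
\]

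For the commutators, write $G_{n}=(1+D^{2})^{-1}g_{n}(D)$ with $g_{n}(D)$ bounded. Given $\varphi\in C_{c}(M)$, choose $\eta\in C_{c}(M)$ equal to $1$ on the $2n$-neighbourhood of $\mathrm{supp}\,\varphi$; finite propagation of $G_{n}$ gives $G_{n}\varphi=\eta G_{n}\varphi$ and $\varphi G_{n}=\varphi G_{n}\eta$, hence $[\varphi,G_{n}]=\varphi G_{n}\eta-\eta G_{n}\varphi$. Now $\eta G_{n}=\bigl(\eta(1+D^{2})^{-1}\bigr)g_{n}(D)$, and $\eta(1+D^{2})^{-1}$ is compact: it is the composition of the bounded map $(1+D^{2})^{-1}\colon L^{2}\to\mathcal H^{2}$ with multiplication by the compactly supported $\eta$, which is compact $\mathcal H^{2}\to\mathcal H^{0}$ by the Rellich-type result recalled after Theorem \ref{bunke}; therefore $\eta G_{n}$ is compact, and so is its adjoint $G_{n}\eta$ (here $G_{n}=G_{n}^{*}$). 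Thus $[\varphi,G_{n}]$ is compact. When $\varphi$ is Lipschitz this refines to a compact operator $L^{2}\to H^{2}$: from $[\varphi,(1+D^{2})^{-1}]=(1+D^{2})^{-1}\bigl(D\,c(\mathrm{grad}\,\varphi)+c(\mathrm{grad}\,\varphi)\,D\bigr)(1+D^{2})^{-1}$ (and the analogous identity with the bounded correction $g_{n}(D)$ inserted), using that $c(\mathrm{grad}\,\varphi)\colon\mathcal H^{2}\to\mathcal H^{0}$ is compact, that $(1+D^{2})^{-1}$ smooths $\mathcal H^{0}\to\mathcal H^{2}$, and that $D$ is bounded $\mathcal H^{2}\to\mathcal H^{0}$, every summand is seen to be compact as an operator $L^{2}\to H^{2}$; a general compactly supported continuous $\varphi$ is then obtained by uniformly approximating it by compactly supported Lipschitz functions supported in a fixed compact set, the compactness (and, after the Lipschitz reduction, the $L^{2}\to H^{2}$ bound) passing to the limit.

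The routine bookkeeping aside, the point that needs genuine care — and the reason the statement is not purely formal — is that in the Lipschitz category there is no pseudodifferential calculus to lean on: the smoothing and compactness inputs ($(1+D^{2})^{-1}\colon L^{2}\to H^{2}$ bounded, the Rellich-type compactness of multiplication and of Clifford multiplication by compactly supported functions from $\mathcal H^{2}$ to $\mathcal H^{0}$, and finite propagation of the wave group) must all be extracted from Teleman's and Hilsum's work, and one must then check that the interaction of finite propagation with these \emph{local} compactness statements really does reproduce the conclusion of \cite[Prop.\ 5.33]{PS2}. This, together with the precise sense in which $[\varphi,G_{n}]$ is an operator $L^{2}\to H^{2}$ for $\varphi$ merely continuous (which is why one reduces at the outset to the Lipschitz case), is the main obstacle.
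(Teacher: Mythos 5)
Your proposal is correct and follows essentially the same route as the paper: both define $G_n$ by truncating the wave-operator representation $(1+D^2)^{-1}=\int\frac{e^{-|t|}}{2}e^{itD}\,dt$ with a dilated cutoff, obtain finite propagation from Hilsum's Theorem \ref{Hil}, and get compactness of the commutators from Rellich-type compactness together with the identity $[\varphi,D]=c(d\varphi)$. The only difference is cosmetic: you estimate the truncation error by integrating by parts to write $(1+x^2)\psi_n(x)=1+\widehat{\rho_n}(x)$ with $\|\rho_n\|_{L^1}\to 0$, whereas the paper controls $\|(1+D^2)^{-1}-G_n\|_{L^2\to H^3}$ via the sup-norm of the third derivative of $\bigl(1-f(t/n)\bigr)\frac{e^{-|t|}}{2}$ --- both are Fourier-side bounds on the same error term.
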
 
	\begin{proof}
		It is an easy computation showing that
		\[
		\frac{1}{1+x^2}=\int^{+\infty}_{-\infty}\frac{e^{-|t|}}{2}e^{-itx}dt.
		\]
		Let $f\colon \RR\to\RR$ be a $C^\infty$ function such that
		\begin{itemize}
			\item $0\leq f\leq 1$,
			\item $f=1$ on a neighbourhood of $0$,
			\item $f$ has compact support.
		\end{itemize}
		
		Define $G_n=\int^{+\infty}_{-\infty}f\left(\frac{t}{n}\right)\frac{e^{-|t|}}{2}e^{-itD}dt$. 
		
		Finite propagation: since $f\left(\frac{t}{n}\right)\frac{e^{-|t|}}{2}$ has compact support, $G_n$ has finite propagation speed.
		
		Pseudolocality:  thanks to the above formula,
		$(1 + D^2)^{-1}-G_n=\int^{+\infty}_{-\infty}(1-f\left(\frac{t}{n}\right))\frac{e^{-|t|}}{2}e^{-itD}dt$. 
		Notice that $(1-f\left(\frac{t}{n}\right))\frac{e^{-|t|}}{2}$ is $C^\infty$ and moreover it is a rapidly decreasing function on the spectrum of $D$. By \cite[Prop 5.31]{Roe}, $(1 + D^2)^{-1}-G_n$ is a bounded operator from $H^m$ to $H^k$ for any $m,k\in\NN$, hence $G_n$ is pseudolocal because so is $(1 + D^2)^{-1}$.
		Indeed, using Jacobi identities for commutators and the fact that $[\varphi,D]=c(d\varphi)$, $[\varphi,(1 + D^2)^{-1}]=(1+D^2)^{-1} c(d\phi) D (1+D^2)^{-1} + (1+D^2)^{-1} D c(d\phi) (1+D^2)^{-1})$ is compact,
		because the Clifford multiplication $c(d\varphi)$ is compact.
		
		In fact we need less: it is sufficient to show that 
		$(1 + D^2)^{-1}-G_n$ is a bounded operator from $L^2$ to $H^3$ and then, by Rellich Theorem, the commutator $[\varphi,(1 + D^2)^{-1}-G_n]$ turns out to be a compact operator
		from $L^2$ to $H^2$. To prove this, we only need that the third derivative of $(1-f\left(\frac{t}{n}\right))\frac{e^{-|t|}}{2}$ has a bounded supremum norm  (less than being rapidly decreasing).
		
		In fact, under these hypotheses and by the properties of the Fourier transform, we get that
		\begin{equation}
			||(1 + D^2)^{-1}-G_n||_{L^2\to H^3}=\left| \left| \left(\left(1-f\left(\frac{t}{n}\right)\right)\frac{e^{-|t|}}{2}\right)'''\right| \right| _\infty
		\end{equation}
		is bounded.
		Moreover $\left(\left(1-f\left(\frac{t}{n}\right)\right)\frac{e^{-|t|}}{2}\right)'''$ is equal to 
		\[
		-\frac{1}{n^3}f'''\left(\frac{t}{n}\right)e^{-|t|}+\frac{3}{n^2}f''\left(\frac{t}{n}\right)|t|e^{-|t|}-\frac{3}{n}f'\left(\frac{t}{n}\right)e^{-|t|}-\left(1-f\left(\frac{t}{n}\right)\right)|t|^3e^{-|t|}
		\] 
		that clearly goes to zero as $n$ goes to infinity.
		This also holds in the Lipschitz case.
	\end{proof}
	
	Now we can state the delocalized Atiyah-Patodi-Singer index theorem, that also holds in the Lipschitz context.
	
	\begin{theorem}[\cite{PS2}]\label{APSdeloc}
		If $\,i\colon C^*(\widetilde{X})^\Gamma\hookrightarrow D^*(\widetilde{X})^\Gamma$ is the inclusion and
		$j_*\colon D^*(\partial\widetilde{ X})^\Gamma\to D^*(\widetilde{X})^\Gamma$ is the map induced by the inclusion 
		$j\colon\partial\widetilde{ X}\hookrightarrow \widetilde{X}$,
		we have
		\[
		i_*(\mathrm{Ind}_\Gamma(D_X+C_{F}^{\mathrm{cyl}}))=j_*(\varrho(D_{\partial X}+C_{F_\partial}))\in K_0(D^*(\widetilde{X})^\Gamma).
		\]
	\end{theorem}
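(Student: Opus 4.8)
The plan is to follow the abstract argument for the delocalized APS index theorem of Piazza and Schick \cite[Section 5.2]{PS2} and to check that every analytic input it uses for the signature operator is equally available in the Lipschitz category for $D_X$ on $X=W_\infty\cup-V_\infty$. First I would isolate those inputs, all of which are now at hand: (i) finite propagation speed for the wave operator of the Lipschitz signature operator and the resulting support estimate for $\psi(D_X)$ when $\widehat\psi$ has compact support (Theorem \ref{Hil}); (ii) pseudolocality of $\psi(D_X)$, so that $\psi(D_X)\in D^*(\widetilde X)^\Gamma$ for a chopping function $\psi$ with $\widehat\psi$ compactly supported (Remark \ref{fp}, via \cite[6.1]{Hil1}); (iii) the Rellich-type local compactness of the inclusion $\mathcal H^2\hookrightarrow\mathcal H^0$, together with the compactness of multiplication by a compactly supported Lipschitz function and of Clifford multiplication by its gradient (\cite[Theorem 7.1]{Tel1}, already used above for Theorem \ref{bunke}); and (iv) the approximation of $(1+D_X^2)^{-1}$ in norm by finite propagation operators $G_n$ with $[\varphi,G_n]$ compact for every compactly supported continuous $\varphi$ on $X$ --- this is the Lipschitz substitute for \cite[Prop 5.33]{PS2} and is exactly the Proposition established just above. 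Since every manipulation in \cite[Section 5.2]{PS2} is built out of functional calculus, finite propagation, pseudolocality and local compactness, once (i)--(iv) are secured the argument transcribes essentially verbatim.

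To make this explicit I would recall the skeleton. Because $C_F^{\mathrm{cyl}}$ is supported on the cylindrical ends of $X$ and restricts there to the trivializing perturbations built by Wahl \cite[Theorem 8.4]{Wahl} out of $C_{F_\partial}$, the operator $D_X+C_F^{\mathrm{cyl}}$ is invertible at infinity, so the index class $\mathrm{Ind}_\Gamma(D_X+C_F^{\mathrm{cyl}})\in K_0(C^*(\widetilde X)^\Gamma)$ is defined as in \cite{PS2}; by (i)--(iv) the chopping $\psi(D_X+C_F^{\mathrm{cyl}})$ lies in $D^*(\widetilde X)^\Gamma$ and is invertible modulo $C^*(\widetilde X)^\Gamma$, which is exactly what is needed to represent that index class. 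I would then restrict attention to a cylindrical end $\partial\widetilde X\times\RR_{\leq0}$, where $D_X$ has the same near-product structure relative to the boundary operator $D_{\partial X}$ as in the smooth case, up to a term whose effect on $\psi(D_X+C_F^{\mathrm{cyl}})$ is controlled by finite propagation, and where $C_F^{\mathrm{cyl}}$ is assembled from $C_{F_\partial}$; using the invertibility of $D_{\partial X}+C_{F_\partial}$ (Corollary \ref{Cor 3.1}), the ``Stokes formula in $K$-theory'' of \cite[Section 5.2]{PS2} produces a homotopy identifying $i_*\mathrm{Ind}_\Gamma(D_X+C_F^{\mathrm{cyl}})$ with $j_*\bigl[\tfrac12\bigl(1+\chi(D_{\partial X}+C_{F_\partial})\bigr)\bigr]$, which by Definition \ref{rho} is $j_*\varrho(D_{\partial X}+C_{F_\partial})$. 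This yields the asserted identity in $K_0(D^*(\widetilde X)^\Gamma)$.

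The step I expect to be the main obstacle is already behind us: it is precisely input (iv), the Lipschitz replacement for \cite[Prop 5.33]{PS2}, since the smooth proof there relies on the smooth wave operator and on the pseudodifferential calculus, neither of which is available for a Lipschitz manifold; this was handled in the preceding Proposition, where the bound $\|(1+D_X^2)^{-1}-G_n\|_{L^2\to H^3}\to0$ was read off directly from the Fourier transform of the cutoff. The only other point I would watch is the behaviour on the Lipschitz cylindrical end --- that $D_X$ there differs from the product operator attached to $D_{\partial X}$ by a bounded term and that $C_F^{\mathrm{cyl}}$ is built out of $C_{F_\partial}$ exactly as in \cite[Theorem 8.4]{Wahl} --- but this is part of the construction recalled in the previous subsection, which we have already observed survives in the Lipschitz framework. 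Everything else in \cite[Section 5.2]{PS2} is formal and insensitive to the absence of a smooth structure.
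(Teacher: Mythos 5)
Your proposal is correct and follows essentially the same route as the paper: both reduce the theorem to the observation that the argument of \cite[Section 5.2]{PS2} is abstract functional analysis that transfers to the Lipschitz setting once the analytic inputs (finite propagation via Theorem \ref{Hil}, pseudolocality, Rellich-type compactness) are verified, with the single genuinely new ingredient being the replacement of \cite[Prop 5.33]{PS2} by the finite-propagation approximation of $(1+D^2)^{-1}$ established in the preceding Proposition. The paper is terser --- it simply asserts that the rest of the PS2 proof carries over --- whereas you also sketch the skeleton of that argument, but the strategy and the key lemma are identical.
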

	Using the functoriality of the classifying map $u\circ F\cup u\colon \widetilde{X}\to E\Gamma$ and the map
	$\Phi:=\pi_1\circ(F\cup-\mathrm{id}_{V\times[0,1]})$ we obtain
	\[
	i_*\widetilde{\Phi}_*(\mathrm{Ind}_\Gamma(D_X+C_{F}^{\mathrm{cyl}}))=\varrho({F_\partial})\in K_0(D^*(\widetilde{V})^\Gamma)
	\]
	
	\[
	i_*u_*\widetilde{\Phi}_*(\mathrm{Ind}_\Gamma(D_X+C_{F}^{\mathrm{cyl}}))=\varrho_\Gamma(F_\partial)\in K_0(D^*_\Gamma).
	\]
	
	Observe that $\varrho_\Gamma$ is additive on disjoint unions as $\partial X= Z\cup -Z'$
	and in particular that 
	\[
	\varrho(F_\partial)= \varrho(f)- \varrho(f').
	\]
	Combining this with \cite[Theorem 8.4]{Wahl},
	we finally have that
	\[
	\varrho(f)=\varrho(f'),
	\]
	and similarly for $\varrho_\Gamma$,
	hence they are well defined on $\mathcal{S}^{TOP}(N)$.

	\section{Mapping surgery to analysis: the odd dimensional case}
	
	We refer the reader to \cite[Section 4]{PS2} for the definitions that we are going to recall:
	\begin{itemize}
		\item  $\mathcal{N}(N )$ is the set of normal maps. Its elements are degree 1 normal maps $[f\colon M\to N]$
		where M
		is an oriented manifold. Two such maps are equivalent if there is a
		normal cobordism between them. There is a map $\beta\colon \mathcal{N}(N)\to K_n(N)$ such that  $[f\colon M\to N]$ goes to the class $f_*[\mathcal{D}_M]-[\mathcal{D}_N]\in K_*(N)$, where $[\mathcal{D}_M]$ and $[\mathcal{D}_V]$ are the K-homology classes of the signature operators. The map $\beta$  already appears in the work of Higson and Roe where it is proved to be well defined.
		\item The map $\mathrm{Ind}_\Gamma \colon L_{n+1}(\ZZ\Gamma) \to K_{n+1}(C^*_r(\Gamma))$ has been defined by Wahl, following the results of Hilsum-Skandalis \cite{HilSk} and Piazza-Schick \cite{PS}.
		Recall that an element $x\in L_{n+1}(\ZZ\Gamma) $ is represented by a quadruple
		$(F\colon W\to X\times[0, 1], u\colon X \to B\Gamma)$ with $W$ a cobordism between two  orientable manifolds $\partial_1W$ and $\partial_2W$,
		$X$ an  orientable manifold, $F \colon (W, \partial W)\to (X \times[0, 1],\partial (X\times[0, 1]))$ a degree one normal map of pairs,
		$f_1 := F_{|\partial_1W}$ and $f_2 := F_{|\partial_2W}$ oriented homotopy equivalences and $u\colon X \to B\Gamma$ a classifying map.
		Let
		$f = f_1 \sqcup f_2$ denote the restriction of $F$ to $\partial W$. Consider $Z := W \sqcup X\times[0, 1]$ and $Z_\infty$ the manifold obtained attaching an infinite cylinder to the boundary $\partial Z$.
		Since $f$ is a homotopy equivalence, one can perturb the signature operator only on the cylindrical ends  and the obtained operator has a
		well defined index in $K_{n+1}(C^*_r(\Gamma))$, that is the image of 	$(F\colon W\to X\times[0, 1], u\colon X \to B\Gamma)$  through the map $\mathrm{Ind}_\Gamma$.
		
	\end{itemize}
	Now we can state the main theorem.
	
	\begin{theorem}
		Let $N$ be an $n$-dimensional closed oriented topological manifold with fundamental group $\Gamma$.
		Assume that $n\geq5$ is odd. Then there is a commutative diagram with exact rows
		\[
		\xymatrix{L_{n+1}(\ZZ\Gamma)\ar[r]\ar[d]^{\mathrm{Ind}_\Gamma} & \mathcal{S}^{TOP}(N)\ar[r]\ar[d]^\varrho & \mathcal{N}^{TOP}(N)\ar[r]\ar[d]^\beta & L_{n}(\ZZ\Gamma)\ar[d]^{\mathrm{Ind}_\Gamma} \\
			K_{n+1}(C^*_r(\Gamma))\ar[r] & K_{n+1}(D^*(\widetilde{N})^\Gamma)\ar[r] & K_n(N)\ar[r] & K_{n}(C^*_r(\Gamma))}
		\]
		and through the classifying map $u\colon N\to B\Gamma$ of the universal cover $\widetilde{N}$ of $N$, we have the analogous commutative diagram that
		involves the universal Higson-Roe exact sequence
		\[
		\xymatrix{L_{n+1}(\ZZ\Gamma)\ar[r]\ar[d]^{\mathrm{Ind}_\Gamma} & \mathcal{S}^{TOP}(N)\ar[r]\ar[d]^{\varrho_\Gamma} &
			\mathcal{N}^{TOP}(N)\ar[r]\ar[d]^{\beta_\Gamma} & L_{n}(\ZZ\Gamma)\ar[d]^{\mathrm{Ind}_\Gamma} \\
			K_{n+1}(C^*_r(\Gamma))\ar[r] & K_{n+1}(D^*_\Gamma)\ar[r] & K_n(B\Gamma)\ar[r] & K_{n}(C^*_r(\Gamma))}
		\]
	\end{theorem}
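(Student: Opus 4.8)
The plan is to assemble the commutative diagram piece by piece, building on the fact that every ingredient --- the maps $\mathrm{Ind}_\Gamma$, $\varrho$, $\beta$ and the exactness of the analytic surgery sequence --- is available abstractly, thanks to the Lipschitz signature operator of Teleman--Hilsum and the reduction, carried out in Sections 2 and 3 above, of Wahl's and Piazza--Schick's arguments to the Lipschitz setting. Concretely: the bottom row is the analytic surgery exact sequence of Higson and Roe, which exists for any topological (hence Lipschitz) manifold $N$ with $\Gamma = \pi_1(N)$; its exactness is not in question. So the content is (i) that each vertical arrow is a well-defined map of sets, and (ii) that the three resulting squares commute. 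Step (i) for $\varrho$ is exactly what was established at the end of Section 3 (well-definedness on $\mathcal{S}^{TOP}(N)$), using the delocalized APS theorem (Theorem \ref{APSdeloc}) together with \cite[Theorem 8.4]{Wahl}; for $\beta$ it is the statement recalled above, due to Higson--Roe and insensitive to the smooth structure; for $\mathrm{Ind}_\Gamma$ it follows from Wahl's construction, again applied in the Lipschitz category using Theorem \ref{hs} and Remark \ref{pert}.

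\textbf{The three squares.} First I would treat the right-hand square, $\beta$ versus $\mathrm{Ind}_\Gamma$ over $\mathcal{N}^{TOP}(N)\to L_n(\ZZ\Gamma)$: this is a purely $K$-homological statement --- the coarse index of the perturbed signature operator on the closed double $Z = M \cup -N$ computes $\mathrm{Ind}_\Gamma$ of the surgery obstruction, and its image under assembly is $f_*[\mathcal{D}_M] - [\mathcal{D}_N]$ --- and the argument of \cite[Section 4]{PS2} goes through verbatim once one knows $[\mathcal{D}_Z]\in KK(C(Z),\CC)$ is the Lipschitz class of Theorem \ref{lip}. Next, the middle square, $\varrho$ versus $\beta$ over $\mathcal{S}^{TOP}(N)\to\mathcal{N}^{TOP}(N)$: here the point is that the image of $\varrho(f)$ in $K_n(N)$ under the boundary map $K_{n+1}(D^*(\widetilde N)^\Gamma)\to K_n(N)$ of the analytic sequence equals $\beta$ of the underlying normal map, which at the level of cycles is the assertion that forgetting the trivializing perturbation $C_f$ recovers the signature operator on $Z$; this is \cite[Proposition 4.9]{PS2} type reasoning and is again abstract. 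Finally the left-hand square, $\mathrm{Ind}_\Gamma$ versus $\varrho$ over $L_{n+1}(\ZZ\Gamma)\to\mathcal{S}^{TOP}(N)$: this is the deepest one and is precisely the content of the delocalized APS index theorem together with the manifold-with-cylindrical-ends construction of Section 3.2, applied to a representative quadruple $(F\colon W\to X\times[0,1], u)$ --- the perturbation $C_F^{\mathrm{cyl}}$ supported on the ends has index $\mathrm{Ind}_\Gamma(x)$ in $K_{n+1}(C^*_r(\Gamma))$, and pushing forward the APS formula $i_*\mathrm{Ind}_\Gamma(D_X + C_F^{\mathrm{cyl}}) = j_*\varrho(D_{\partial X}+C_{F_\partial})$ by the relevant classifying maps yields exactly the commutativity of the square. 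The second diagram, over $B\Gamma$, then follows by applying $u_*$ everywhere and using naturality of the analytic surgery sequence under the map $u\colon N\to B\Gamma$.

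\textbf{Main obstacle.} The one genuinely delicate point --- and the reason the theorem is stated at the level of sets rather than of exact sequences --- is that I must \emph{not} claim that $\varrho$ is a group homomorphism; as emphasized in the introduction, the group structure on $\mathcal{S}^{TOP}(N)$ is hard to control and whether $\varrho$ is additive is left open. So the commutativity of the left-hand square has to be verified directly on representatives, i.e.\ for each class $x\in L_{n+1}(\ZZ\Gamma)$ one checks that $\varrho$ applied to the structure $x\cdot[\mathrm{id}_N]$ (the action of the Wall group on the base point) coincides with the image of $\mathrm{Ind}_\Gamma(x)$, rather than arguing by linearity. This is where the cylindrical-end machinery and Bunke's relative index theorem (Theorem \ref{bunke}, in its Lipschitz form proved above) do the real work: one glues $W_\infty$ to $-(N\times[0,1])_\infty$, uses that $C_F^{\mathrm{cyl}}$ makes the interior operator invertible away from the ends, invokes Bunke to identify the index before and after surgery on the hypersurface, and reads off the $\varrho$-class of the boundary structure. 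Everything else is a matter of quoting the abstract lemmas already reduced to the Lipschitz setting in Sections 2 and 3, so I would spend the bulk of the write-up making the representative-level computation in the left square explicit and merely indicating that the other two squares are \cite{PS2}-style verbatim.
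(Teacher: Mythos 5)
Your proposal follows essentially the same route as the paper: exactness of the bottom row is taken as given from Higson--Roe, well-definedness of the vertical maps is reduced to the Lipschitz versions of Wahl's and Piazza--Schick's results established in Sections 2--3, the third square is immediate, the second is Paschke duality plus forgetting the trivializing perturbation, and the first is the delocalized APS index theorem applied to the cylindrical-end construction. The only cosmetic difference is that the paper verifies the first square in the stronger form $i_*(\mathrm{Ind}_\Gamma(a))=\varrho(a[f\colon M\to N])-\varrho([f\colon M\to N])$ for an arbitrary structure $f$ (following \cite[4.10]{PS2}), rather than only at the base point $[\mathrm{id}_N]$ as you propose.
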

	
	\begin{remark}
		Let us recall the fact that, despite  $\mathcal{S}^{TOP}(N)$ has a group structure, we do not deal with it and the top row is considered just as a sequence of sets, as in the smooth case.
	\end{remark}
	
	Thanks to the results of the previous section we can check that the results in  \cite[Sections 4.2 and 4.3]{PS2} hold in terms of the category of topological manifolds
	instead of the one of smooth manifolds: all proofs are still valid in the Lipschitz context.
	Thanks to the work of C.Whal \cite[Theorem 9.1]{Wahl}, that can be combined with Theorem \ref{Hil}, the first vertical arrow is well defined in the Lipschitz setting. The second one is also well defined for the previous section. Concerning the third one there are no significant problems.
	
	The same method used in Proposition\ref{dep} applies to the 
	class of the signature and its index class, then all vertical arrows do not depend on the chosen Lipschitz structure.
	
	One has to check the commutativity of the three squares.
	\begin{itemize}
		\item The third square is obviously commutative: let  $(f\colon M\to N)$ be a normal map in $\mathcal{N}^{TOP}(N)$, it is sent horizontally to the same map forgetting that it is normal and then
		through $\mathrm{Ind}_\Gamma$ to the difference $\mathrm{Ind}_\Gamma(D_M)-\mathrm{Ind}_\Gamma(D_N)$;
		on the other hand $\beta(f\colon M\to N)=f_*[D_M]-[D_N]$,
		that gives, through the analytic assembly map,  the
		index class just founded.
		\item Let us study the second square: let $(f\colon M\to N)$ be a structure in $\mathcal{S}^{TOP}(N)$, it goes to
		the same map forgetting that $f$ is a homotopy equivalence;
		the $\varrho$-class $\varrho(f)$, as in Definition \ref{rho}, 
		is the push-forward through $\tilde{\varphi}$ of the class
		\[\left[\frac{1}{2}(1+\chi(\widetilde{D}_Z+C_f))\right]\in K_0(D^*(\widetilde{Z})^\Gamma);\]
		this goes horizontally to the class in $ K_0(D^*(\widetilde{Z})^\Gamma/C^*(\widetilde{Z})^\Gamma)$
		that represents, by Paschke duality, the K-homology class
		of the signature operator of $Z$; then by functoriality of $\tilde{\varphi}_*$ and the fact that $\beta(f\colon M\to N)=f_*[D_M]-[D_N]$, we obtain the commutativity of the second square.
		
		\item For the commutativity of first square, the proof is exactly the same as in  \cite[4.10]{PS2}.
		Let $a\in L_{n+1}(\ZZ\Gamma)$ and let $(f\colon M\to N)$ be a structure in $\mathcal{S}^{TOP}(N)$.
		The commutativity of the first square means that the following  equation holds:
		\[
		i_*(\mathrm{Ind}_\Gamma(a))=\varrho(a[f\colon M\to N])-\varrho([f\colon M\to N])\in K_0(D^*(\widetilde{Z})^\Gamma);
		\]
		this is proved identifying the right hand side with the class predicted by the APS delocalized index theorem, that, as we know, holds in the Lipschitz case too.
		The proof is based on an addition formula, as in \cite[7.1]{Wahl}, and algebraic identifications of $\varrho$-classes, that the reader can check still holding, word-for-word, in the Lipschitz case.
	\end{itemize}

	\section{Products}
	
	Let $M$ and $N$ be two Cartesian products with a common factor, namely $M=M_1\times M_2$ and $N=N_1\times M_2$, and let
	$f_1\colon M_1\to N_1$ be a homotopy equivalence. Therefore $f=f_1\times\mathrm{id}\colon M\to N$ is a homotopy equivalence.
	
	Observe that the signature operator on $Z=M\cup(-N)$ has this form: $D_Z=D_1\hat{\otimes}1+1\hat{\otimes}D_2$,
	i.e. the graded tensor product of the signature operator $D_1$ on $M_1\cup(-N_1)$ and the signature operator $D_2$ on $M_2$.
	
	As before we construct from $f$ a bounded operator $C_f$ that  produces an invertible perturbation $D_Z+C_f$.
	Notice that, from the construction in \cite{HilSk} and as it has been pointed out in \cite[(6.1)]{Wahl}, the operator $C_f$ has the form $C_{f_1}\hat{\otimes}1$, where all grading operators are understood in the graded tensor product. We have 
	\[
	D_Z+C_f=(D_1+C_{f_1})\hat{\otimes}1+1\hat{\otimes}D_2
	\]
	so we can associate  an invertible perturbation of $D_Z$ to an invertible perturbation of $D_1$.
	
	We would like to state a product formula involving the $\varrho$-class invariant
	of the first factor and the K-homology class of the second one. For this aim it will be useful to give another realization of the group $K_*(D^*(\widetilde{X})^\Gamma)$.

	In \cite[II.2]{Ka}  the Grothendieck group of a functor $\varphi\colon\mathcal{C}\to \mathcal{C}'$ is defined as the 
	set of triples $(E,F,\alpha)$, where $E$ and $F$ are objects in the category $\mathcal{C}$ and $\alpha$ is an isomorphism
	$\varphi(E)\to\varphi(F)$ in the category $\mathcal{C}'$, modulo the following equivalence relation:
	two triples $(E,F,\alpha)$ and $(E',F',\alpha')$ are equivalent if there exist two isomorphisms $f\colon E\to E'$ and $g\colon F\to F'$ such that
	the following diagram 
	\[
	\xymatrix{\varphi(E)\ar[r]^{\alpha}\ar[d]^{\varphi(f)} & \varphi(F)\ar[d]^{\varphi(g)}\\
		\varphi(E')\ar[r]^{\alpha'} & \varphi(F')}
	\]
	commutes.
	
	In \cite[II.3.28]{Ka} it is shown that, when $\varphi$ is the restriction of vector bundles over a space $X$ to a closed subspace $Y$,
	one obtains the relative K-group $K(X,Y)$ as the K-theory of the mapping cone of the inclusion $i\colon Y\hookrightarrow X$.
	
	In \cite{SkExt}, G. Skandalis used the same idea: considering  an element $x$ in $KK(A,B)$ as a functor from $K(A)$ to $K(B)$ through the Kasparov product, one can construct a relative K-group  $K(x)$ and one can also prove that it is isomorphic to the K-theory of a mapping cone C*-algebra.
	Moreover this relative K-group fits in a long exact sequence
	\[
	\xymatrix{\dots\ar[r] & K(B\otimes C_0(0,1))\ar[r] & K(x)\ar[r] & K(A)\ar[r] &\dots }
	\] 
	such that the boundary map is given by the Kasparov product with $x$.

	More generally, if we fix a separable C*-algebra $D$, we can consider an element $x$ in $KK(A,B)$ as a functor from $KK(D,A)$ to $KK(D,B)$, through the Kasparov product with $x$ on the right and we can 
	still obtain a relative KK-group $K(D,x)$ that turns out to be isomorphic to the group $KK(D,C_\psi)$, where $C_\psi$ is the mapping cone C*-algebra of a suitable *-homomorphism $\psi$.
	
	So we have seen that constructing relative KK-groups corresponds, in a philosophical way, to taking the Grothendieck group of a functor or, in a more concrete way, to taking the Grothendieck group of a mapping cone. We want to construct a long exact sequence of groups
	such that the boundary map is the  assembly map. Notice that the difficulty resides in the fact that the assembly map is not induced by a morphism nor by a Kasparov product on the right. But it is  still possible to construct a group. 
	
	\subsection{The analytic structure set and products}
	
	Let $X$ be a proper and cocompact $\Gamma$-space, we would like to give an explicit construction of the cycles of $K_*(D^*(X)^\Gamma)$ in terms of Kasparov bimodules, so that 
	one can define a product by means of Kasparov products.
	
	In \cite{RoeC} J. Roe shows that the following diagram 
	\begin{equation}	\label{roecomparing}
		\xymatrix{K_{*+1}(D^*(X)^\Gamma/C^*(X)^\Gamma)\ar[r]^-{\mathrm{Ind}}\ar[d]^{P} & K_{*}(C^*(X)^\Gamma)\ar[d]^{\cong}\\
			KK^*_\Gamma(C_0(X),\CC)\ar[r]^{\mu^{\Gamma}_{X}}& K_*(C^*_r(\Gamma))}
	\end{equation}
	is commutative. Here $P$ is given by the Paschke duality and $\mu_X^\Gamma$ is the assembly map defined by Kasparov in \cite{kasp}.
	Let us recall some notation and definitions.
	For any $\Gamma$-C*-algebras $A$ and $B$, there exists a
	descent homomorphism 
	\[j^\Gamma\colon KK_\Gamma(A,B)\to KK(A\rtimes \Gamma,B\rtimes \Gamma)\]
	which is functorial and compatible with respect to Kasparov products.
	It associates to an equivariant KK-cycle $[H,\phi,F]$ the Kasparov bimodule $[H\rtimes \Gamma, \widetilde{\phi},\widetilde{F}]$, where
	
	\begin{itemize}
		\item $H\rtimes \Gamma$ is the $A\rtimes \Gamma$-$B\rtimes \Gamma$-bimodule given by the completion of $C_c(\Gamma,H)$, with the usual $C_c(\Gamma,B)$-valued inner product and left $C_c(\Gamma,A)$-action;
		\item $\widetilde{\phi}$ is the extension to $A\rtimes\Gamma$ of the representation of
		$C_c(\Gamma,A)$ induced by $\phi$ ;
		\item $\widetilde{F}$ is the extension to  $H\rtimes \Gamma$ of the operator $F$  that  associates to
		$\gamma\mapsto\alpha(\gamma)$ the element $\gamma\mapsto F(\alpha(\gamma))$ on $C_c(\Gamma,H)$.
	\end{itemize}
	
	Moreover we know that for any proper and cocompact $\Gamma$-space $X$
	one can construct an imprimitivity $C(X/\Gamma)$-$C_0(X)\rtimes\Gamma$-bimodules $E_X$. Since $C(X/\Gamma)$ is unital $E_X$ defines an element in $KK(\CC,C_0(X)\rtimes\Gamma)$.
	
	\begin{definition}\label{defassembly}
		The assembly map $\mu^\Gamma_X$ is defined as the composition
		
		\[
		\xymatrix{KK^*_\Gamma(C_0(X),\CC)\ar[r]^-{j^\Gamma}& KK^*(C_0(X)\rtimes\Gamma,C^*_r(\Gamma))\ar[r]^-{[E_X]\otimes-} & K_*(C^*_r(\Gamma))}.
		\]
	\end{definition}
	
	Let us sketch  how the commutativity of the diagram \eqref{roecomparing} is proved: if $F\in\BB(L^2(X))$ is an element of $D^*(X)^\Gamma$ such that 
	$F^2=1$ and $F^*=F$ modulo $C^*(X)^\Gamma$, a representative for \[\mu_X^\Gamma\left[L^2(X),\varphi\colon C_0(X)\to\BB(L^2(X)),F\right]\] is given in the following way.
	Since we can assume that $F$ is exactly of finite propagation, it defines an operator $F_c$ on the pre-Hilbert space $L^2_c(X)$ of the compactly supported $L^2$-functions of $X$.
	One can endow $L^2_c(X)$ with the following $\CC\Gamma$-valued inner product
	\[
	\langle f,g\rangle_{\CC\Gamma}(\gamma)=\langle f^\gamma,g\rangle_\CC
	\]
	where $\langle f^\gamma,g\rangle_\CC$ is the standard inner product between $g$ and the function $f$ translated by $\gamma$.
	With a standard double completion of the pair $(L^2_c(X),\CC\Gamma)$ we obtain an Hilbert module over $C^*_r(\Gamma)$ that we denote by $L^2_\Gamma(X)$.
	Now $F_c$ extends to an adjointable operator $\widetilde{F}$ on $L^2_\Gamma(X)$ and the class
	$[L^2_\Gamma(X),1\otimes\widetilde{\varphi},\widetilde{F}]\in KK(\CC,C^*_r(\Gamma))$ is equal to $\mu_X^\Gamma\left[L^2(X),\varphi,F\right]$.
	\begin{remark}\label{invertibleF}
		Notice that if $F$ is invertible, then $\widetilde{F}$ is also invertible. 
	\end{remark}
	
	Moreover one can prove that 
	$L^2_\Gamma(X)$ is a complemented sub-Hilbert module of $L^2(X)\rtimes\Gamma$. In fact if $\phi\colon X\to [0,1]$ is a compactly supported function such that
	\[
	\sum_{\gamma\in\Gamma}(\phi^2)^\gamma=1,
	\]
	then the projection 
	\[
	p=\sum_{\gamma\in\Gamma}\phi\cdot\phi^{\gamma^{-1}}[\gamma]\in C_0(X)\rtimes\Gamma
	\] 
	has as range the $C^*_r(\Gamma)$-module $L^2_\Gamma(X)$. Actually  the projection $p$  gives the class  $[E_X]\in KK(\CC,C_0(X)\rtimes\Gamma)$ used in  the Definition \ref{defassembly} of the assembly map.

	\begin{definition}\label{defcycle}
		Let $X$ be as above. A $\Gamma$-equivariant analytic structure cycle on $X$ consists of the following data:
		\begin{itemize}
			\item an equivariant selfadjoint Kasparov bimodule $(H,\phi,T)\in \mathbb{E}^{\Gamma}(C_0(X),\CC)$ ;
			\item a  Kasparov bimodule $(\mathcal{E}(t),\psi(t),S(t))\in \mathbb(\CC,C^*_r(\Gamma)[0,1))$, such that $\mathcal{E}(0)=E_{X}\otimes_{C_0(X)\rtimes\Gamma}H \rtimes \Gamma$, $\psi(0)=\mathrm{id}\otimes_{\tiny{C_0(X)\rtimes\Gamma}}\phi$ (that from now on we will denote in short by $\mathrm{id}\otimes\phi$), $S(0)$ is a $\widetilde{T}$-connection  and $S(1)$ is invertible. Here $\widetilde{\phi}$ and $\widetilde{T}$ are as in the definition of the descent homomorphism.
			That is the class of $(\mathcal{E}(0),\psi(0),S(0))$ is equal to $\mu_X^\Gamma(H,\phi,T)$.
			
		\end{itemize}
		
		Such a cycle is said to be degenerate if both $(H,\phi,T)$ and $(\mathcal{E}(t),\psi(t),S(t))$ are degenerate Kasparov bimodules.
	\end{definition}
	
	\begin{definition}
		Let $(H_i,\phi_i,T_i,\mathcal{E}(t)_i,\psi(t)_i,S_i(t))$, $i=0,1$, 
		be two $\Gamma$-equivariant analytic structure cycles. 
		
		We will say that they are homotopic if
		there exists a path $(H_s,\phi_s,T_s,\mathcal{E}_s(t),\psi(t)_s,S_s(t))$ of  $\Gamma$-equivariant analytic structure cycles that joins them. 
		Then we denote by $S_j^\Gamma(X)$ the Grothendieck group generated by all homotopy
		classes of $\Gamma$-equivariant analytic structure cycles on $X$.
		
		We can define in an analogous way a group $S^{\Gamma}_*(X,A)$, where $A$ is any $\Gamma$-C*-algebra, using the assembly map with coefficient $\mu_{X,A}^{\Gamma}\colon KK^\Gamma(C_0(X),A)\to KK(\CC,A\rtimes\Gamma)$. 
	\end{definition}

	\begin{remark}
		Note that one can give the definition of $S^\Gamma_*(X,A)$ also when $\Gamma$ is a groupoid instead of a group.
	\end{remark}

	\begin{proposition}\label{commutativo}
		There is a commutative diagram 
		\[
		\xymatrix{
			\cdots\ar[r] & 
			K_j(C^*(X)^\Gamma)\ar[r]\ar[d]^{\beta}  & K_j(D^*(X)^\Gamma)\ar[r]\ar[d]^{\alpha} & K_j(D^*(X)^\Gamma/C^*(X)^\Gamma)\ar[r]\ar[d]^{P}& \cdots\\
			\cdots\ar[r] & 
			KK^{j-1}(\CC,C_r^*(\Gamma)\otimes C_0(0,1))\ar[r]& S_{j-1}^\Gamma(X)\ar[r]& KK_\Gamma^{j-1}(C_0(X),\CC)\ar[r]& \cdots
		}
		\]
		whose vertical arrows are isomorphisms.
	\end{proposition}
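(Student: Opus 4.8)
The plan is to prove Proposition~\ref{commutativo} by the five lemma. The top row is the Higson--Roe analytic surgery sequence, which is exact; the bottom row is exact because, by construction (compare the discussion of \cite{Ka,SkExt} above), $S^\Gamma_*(X)$ is the Grothendieck group of the functor $\mu^\Gamma_X\colon KK_\Gamma(C_0(X),-)\to KK(\CC,-\rtimes\Gamma)$, and such relative groups sit in a mapping-cone type six-term sequence whose connecting map is $\mu^\Gamma_X$ itself. It then suffices to: identify the two outer vertical maps as isomorphisms; observe that the square connecting the two boundary maps of the rows is the commuting square \eqref{roecomparing}; define the middle arrow $\alpha$; and verify that the two explicitly drawn squares commute. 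The five lemma then forces $\alpha$ to be an isomorphism.

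\textbf{The outer maps and the boundary square.} The map $P$ is the Paschke duality isomorphism $K_j(D^*(X)^\Gamma/C^*(X)^\Gamma)\xrightarrow{\ \cong\ }KK^{j-1}_\Gamma(C_0(X),\CC)$, which is classical. The map $\beta$ is the composite
\[
K_j(C^*(X)^\Gamma)\ \xrightarrow{\ \cong\ }\ K_j(C^*_r(\Gamma))\ \xrightarrow{\ \cong\ }\ KK^{j-1}(\CC,C^*_r(\Gamma)\otimes C_0(0,1)),
\]
the first arrow being induced by the Morita equivalence between $C^*(X)^\Gamma$ and $C^*_r(\Gamma)$ implemented (for $X$ cocompact) by the bimodule $L^2_\Gamma(X)$, and the second being the suspension isomorphism; both are isomorphisms. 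With these identifications in place the square relating the connecting maps $\mathrm{Ind}\colon K_j(D^*/C^*)\to K_{j-1}(C^*(X)^\Gamma)$ and $\mu^\Gamma_X\colon KK^{j-1}_\Gamma(C_0(X),\CC)\to KK^{j-1}(\CC,C^*_r(\Gamma))$ is precisely diagram \eqref{roecomparing}, hence commutes.

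\textbf{The map $\alpha$.} Represent a class of $K_j(D^*(X)^\Gamma)$, after stabilisation and adjunction of a unit, by an operator $F$ over $D^*(X)^\Gamma$ which is invertible and self-adjoint (in the odd case one passes to $\left(\begin{smallmatrix} 0 & F^* \\ F & 0 \end{smallmatrix}\right)$). Then $(L^2(X),\varphi,F)$ is a selfadjoint cycle in $\mathbb{E}^\Gamma(C_0(X),\CC)$, and the completion recalled before Remark~\ref{invertibleF} produces the Kasparov $(\CC,C^*_r(\Gamma))$-module $\big(E_X\otimes_{C_0(X)\rtimes\Gamma}L^2(X)\rtimes\Gamma,\ \mathrm{id}\otimes\varphi,\ \widetilde{F}\big)$ representing $\mu^\Gamma_X(L^2(X),\varphi,F)$. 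Since $F$ is invertible, $\widetilde{F}$ is invertible by Remark~\ref{invertibleF}, so the constant path $S(t)\equiv\widetilde{F}$ is an admissible trivialising path in the sense of Definition~\ref{defcycle}, and we set
\[
\alpha[F]:=\big[\,L^2(X),\varphi,F,\ E_X\otimes L^2(X)\rtimes\Gamma,\ \mathrm{id}\otimes\varphi,\ \widetilde{F}\,\big]\ \in\ S^\Gamma_{j-1}(X).
\]
Well-definedness is routine: a homotopy of $F$ through invertibles of $D^*(X)^\Gamma$ gives a homotopy of analytic structure cycles with constant paths $\widetilde{F_s}$; a perturbation of $F$ by an element of $C^*(X)^\Gamma$ keeps $\widetilde{F}$ invertible and alters $(L^2(X),\varphi,F)$ only inside its $KK_\Gamma$-class; and a degenerate $F$ yields a degenerate cycle.

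\textbf{Commutativity of the two squares, and the obstacle.} For the right-hand square, ``up then across'' sends $[F]$ to the Paschke dual of the image of $F$ in $D^*/C^*$, that is to $[L^2(X),\varphi,F]\in KK^{j-1}_\Gamma(C_0(X),\CC)$, while ``across then down'' sends $[F]$ to $\alpha[F]$ and then forgets the trivialising path, returning the same class; so this square commutes by construction. For the left-hand square one represents a class of $K_j(C^*(X)^\Gamma)$, after stabilisation, by an invertible self-adjoint $F=F_0+c$ over $D^*(X)^\Gamma$ with $F_0$ degenerate and $c\in C^*(X)^\Gamma$; then $(L^2(X),\varphi,F)$ is degenerate modulo $C^*(X)^\Gamma$, hence maps to $0$ in $KK^{j-1}_\Gamma$, while $\widetilde{F}=\widetilde{F_0}+\widetilde{c}$ with $\widetilde{c}$ a compact operator on the complemented submodule $L^2_\Gamma(X)\subset L^2(X)\rtimes\Gamma$; thus $\alpha(i_*[F])$ is the image under the bottom-left arrow of the class $[\widetilde{F}]\in K_j(C^*_r(\Gamma))\cong KK^{j-1}(\CC,C^*_r(\Gamma)\otimes C_0(0,1))$. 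It remains to recognise $[F]\mapsto[\widetilde{F}]$ as exactly the Morita isomorphism defining $\beta$; this holds because cutting by the projection $p=\sum_{\gamma}\phi\cdot\phi^{\gamma^{-1}}[\gamma]$ of $C_0(X)\rtimes\Gamma$ realises $C^*(X)^\Gamma$ as $\mathbb{K}_{C^*_r(\Gamma)}(L^2_\Gamma(X))$ and implements the standard stable isomorphism. This last verification --- matching the structure cycles coming from $C^*(X)^\Gamma$-perturbations with the Morita-plus-suspension description of $K_j(C^*(X)^\Gamma)$, compatibly with the connecting-map identification \eqref{roecomparing} --- is the genuinely delicate point; everything else is bookkeeping, and with it in place the five lemma completes the proof.
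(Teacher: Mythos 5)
Your overall strategy coincides with the paper's: $P$ is Paschke duality, $\beta$ is the isomorphism $C^*(X)^\Gamma\simeq C^*_r(\Gamma)$ followed by suspension, $\alpha$ sends an invertible symmetry $F$ over $D^*(X)^\Gamma$ to the analytic structure cycle whose trivialising path is constantly equal to $\widetilde{F}$ (invertible by Remark~\ref{invertibleF}), the right-hand square commutes essentially by construction, and bijectivity of $\alpha$ then follows from the five lemma together with the identification of the boundary square with \eqref{roecomparing} (the paper leaves this last five-lemma step implicit). The one place where your argument is genuinely incomplete is the left-hand square, and you have located the difficulty in the wrong spot. The real issue is not recognising $[F]\mapsto[\widetilde{F}]$ as the Morita isomorphism; it is that $\beta$ lands in the suspension $KK^{j-1}(\CC,C^*_r(\Gamma)\otimes C_0(0,1))$, so ``down then across'' produces a structure cycle whose Kasparov-bimodule part is zero and whose path is a \emph{loop} of invertibles over $C^*(X)^\Gamma$ (in the paper, the Bott loop $G(t)=\widetilde{F}(1-e^{2i\pi t})-1$), whereas ``across then down'' produces the cycle $\bigl(L^2(X),\varphi,F;\ \text{constant path }\widetilde{F}\bigr)$. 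You assert these agree in $S^\Gamma_{j-1}(X)$ but exhibit no homotopy of analytic structure cycles between them, which is precisely the content of the square. The paper supplies exactly this: since $(L^2(X),\phi,F)$ is degenerate for $F$ arising from a projection over $C^*(X)^\Gamma$, it can be discarded, and the reparametrisation $G_s(t)=G\bigl((1-s)t+\tfrac{1}{2}s\bigr)$, which contracts the Bott loop onto its midpoint value $G(\tfrac{1}{2})=\widetilde{F}$ through admissible paths, is the required homotopy. Without some such explicit deformation the left square is unverified, and since the isomorphism claim for $\alpha$ rests on the five lemma applied to both squares, your proof is not complete as written.
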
 
	\begin{proof}
		
		Let $\left(L^2(X),\phi\colon C_0(X)\to \BB(L^2(X))\right)$ be the $\Gamma$-equivariant $C_0(X)$-module used to construct the algebra $D^*(X)^\Gamma$.
		The map $P$ is given by the Paschke duality.
		The homomorphism $\beta$ is given by the composition of the isomorphism between $C^*(X)^\Gamma$ and $C^*_r(\Gamma)$, and the Bott  periodicity.
		Let us describe the homomorphism $\alpha\colon K_0(D^*(X)^\Gamma)\to K_{1}(\mu^\Gamma_X)$.
		It associates to a projection $p$ over  $D^*(X)^\Gamma$ the cycle
		$(H,\varphi,F,\mathcal{E}(t),\psi(t),S(t))$, where
		\begin{itemize}
			\item $(H,\varphi,F)=(L^2(X),\varphi,2p-1)$;
			\item $(\mathcal{E}(t),\psi(t),S(t))$ is given by the path constantly equal to $(L^2_\Gamma(X),\mathrm{id}\otimes\widetilde{\varphi}, \widetilde{F})$, that is the triple  built in  the discussion at the beginning of the present section.		
		\end{itemize}
		
		Observe that $L^2_\Gamma(X)$ is nothing but $E_X\otimes_{C_0(X)\rtimes\Gamma}L^2(X)\rtimes\Gamma$ and that $\widetilde{F}$ is invertible by construction (see Remark \ref{invertibleF}).
		
		The homomorphism $\beta$ associates to a projection $p$ over $C^*(X)^\Gamma$ the Kasparov bimodule
		$[L^2_\Gamma(X),\mathrm{id}\otimes\widetilde{\phi},G(t)]$, where  $G(t)$ is  the loop of invertible elements $\widetilde{F}(1-e^{2i\pi t})-1$  over $C^*(X)^\Gamma$, given by the Bott periodicity.

		The second square is obviously commutative.
		Concerning the first one, since $(L^2(X),\phi, F)$ is degenerate as Kasparov bimodule, it is easy to produce a homotopy of cycles
		between $(0,0,0,L^2_\Gamma(X),\mathrm{id}\otimes\widetilde{\varphi},G(t))$ and 
		$\alpha(i_*[p])=[H,\varphi,F,L^2_\Gamma(X),\mathrm{id}\otimes\widetilde{\varphi},S(t)]$, where $S(t)$ is the constant path equal to $\widetilde{F}$.
		To  do that, observe that $G\left(\frac{1}{2}\right)=\widetilde{F}$ and that 
		$G_s(t)=G\left((1-s)t+\frac{1}{2}s\right)$ does the job.	
	\end{proof}
	
	\begin{remark}\label{intuition}
		Let $\varphi\colon A\to B$ a C*-algebras morphism and $C_\varphi$ its mapping cone.
		Then we obtain naturally the long exact sequence of K-groups
		\[
		\xymatrix{
			\cdots\ar[r] & 
			K_*(SB)\ar[r]  & K_*(C_\varphi)\ar[r] & K_*(A)\ar[r]^{\varphi_*}& K_*(B)\ar[r]& \cdots}
		\]
		whose boundary morphism is induced by $\varphi$.
		Conversely, if we start from a homomorphism $K_*(A)\to K_*(B)$ induced by a morphism $\varphi\colon A\to B$, then this homomorphism fits into a sequence as above.

		As explained before, the idea behind the construction of $S^\Gamma_j(X)$ is considering the assembly map as a functor. But instead of a Kasparov product on the right, we have the assembly map, that is the composition of the descent morphism and a Kasparov product on the left, and instead of the K-theory of a mapping cone of C*-algebras we obtain the Grothendieck group of  
		a "mapping cone" of Kasparov bimodules.
	\end{remark}

	\begin{definition}\label{Sfunc}
		Let $Y,Z$ be two spaces and assume that a group $\Gamma$ acts on $Y$ and $Z$ in a proper and cocompact way.
		Let $f\colon Y\to X$ be a $\Gamma$-equivariant continuous map.
		We can define  a homomorphism
		\[f_*\colon S_{j}^\Gamma(Y)\to S_{j}^\Gamma(X)\]
		such that $f_*[H,\phi,T,\mathcal{E}(t),\psi(t),S(t)]=[H,\phi\circ f,T,\mathcal{E}'(t),\psi'(t),S'(t))]$.
		
		Here $(\mathcal{E}'(t),\psi'(t),S'(t))$ is the concatenation of the path we are going to describe and the path $(\mathcal{E}(t),\psi(t),S(t))$.
		The first one is the path connecting the Kasparov bimodules $(E_{X}\otimes_{C_0(X)\rtimes\Gamma}H\rtimes \Gamma,\mathrm{id}\otimes(\widetilde{\phi\circ f)}, S')$ and $(E_{Y}\otimes_{C_0(Y)\rtimes\Gamma}H\rtimes \Gamma,\mathrm{id}\otimes\widetilde{\phi}, S)$, 
		where $S'$ is a $\widetilde{T}$-connection on $E_{Y}\otimes_{C_0(Y)\rtimes\Gamma}H\rtimes \Gamma$. This path always exists  thanks to the functoriality of the assembly map: since $\mu^\Gamma_Y=\mu^\Gamma_X\circ f_*\colon KK_\Gamma(C_0(Y))\to K_0(C^*_r(\Gamma))$, it turns out that $(E_{X}\otimes_{C_0(X)\rtimes\Gamma}H\rtimes \Gamma,\mathrm{id}\otimes(\widetilde{\phi\circ f}), S')$ and $(E_{Y}\otimes_{C_0(Y)\rtimes\Gamma}H\rtimes \Gamma,\mathrm{id}\otimes\widetilde{\phi}, S)$ define  the same class.
		
		Moreover the class obtained does not depend on the choice of this path.

	\end{definition}
	\begin{lemma}\label{coarsemorphism}
		Let $Y,X$ be two Riemannian manifolds and assume that a group $\Gamma$ acts on $Y$ and $X$ freely, isometrically and such that $Y/\Gamma$ is compact.
		Let $f\colon Y\to X$ be a $\Gamma$-equivariant continuous coarse map and let $V\colon H_Y\to H_X$ be an isometry that covers $f$ in the $D^*$-sense (\cite[Definition 1.7]{PS}).
		Then the following diagram
		
		\[
		\xymatrix{
			K_j(D^*(Y)^\Gamma)\ar[r]^{f_*}\ar[d]^{\alpha} & K_j((D^*(X)^\Gamma)\ar[d]^{\alpha}\\
			S_{j-1}^\Gamma(Y)\ar[r]^{f_*} & S_{j-1}^\Gamma(X)
		}
		\]
		commutes.
	\end{lemma}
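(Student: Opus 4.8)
The plan is to evaluate both ways round the square on a generator and then to exhibit an explicit homotopy of $\Gamma$-equivariant analytic structure cycles between the two outputs; after a suspension we may assume the class in $K_j(D^*(Y)^\Gamma)$ is represented by a projection $p$. We unwind the three maps. By the covering-isometry formalism, $f_*\colon K_j(D^*(Y)^\Gamma)\to K_j(D^*(X)^\Gamma)$ sends $[p]$ to $[VpV^*]$, and $2VpV^*-1$ is a genuine $\Gamma$-invariant symmetry on $L^2(X)$ which acts as $-1$ on the orthogonal complement of the image of $V$ (this summand is inessential because $H_Y$ and $H_X$ are ample). By Proposition \ref{commutativo}, for a projection $q$ over $D^*(X)^\Gamma$ one has $\alpha[q]=[\,L^2(X),\varphi_X,2q-1,\,\mathcal{E},\psi,S\,]$ with $(\mathcal{E},\psi,S)$ the constant path at $(L^2_\Gamma(X),\mathrm{id}\otimes\widetilde{\varphi_X},\widetilde{2q-1})$, where $L^2_\Gamma(X)=E_X\otimes_{C_0(X)\rtimes\Gamma}(L^2(X)\rtimes\Gamma)$ and $\widetilde{2q-1}$ is invertible since $2q-1$ is a symmetry. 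Finally $f_*$ on $S^\Gamma_{j-1}$ is given by Definition \ref{Sfunc}. Hence going right then down produces on $X$ the cycle whose Kasparov $(C_0(X),\CC)$-component is $(L^2(X),\varphi_X,2VpV^*-1)$ and whose $C^*_r(\Gamma)$-path is constant at $(L^2_\Gamma(X),\mathrm{id}\otimes\widetilde{\varphi_X},\widetilde{2VpV^*-1})$; going down then right produces the cycle whose Kasparov component is $(L^2(Y),\varphi_Y\circ f,2p-1)$ (notation as in Definition \ref{Sfunc}) and whose $C^*_r(\Gamma)$-path is the concatenation of the auxiliary path of Definition \ref{Sfunc} with the constant path at $(L^2_\Gamma(Y),\mathrm{id}\otimes\widetilde{\varphi_Y},\widetilde{2p-1})$.

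I would then build the required homotopy in two stages. First, for the Kasparov $(C_0(X),\CC)$-component: $(L^2(X),\varphi_X,2VpV^*-1)$ and $(L^2(Y),\varphi_Y\circ f,2p-1)$ represent the same class in $KK_\Gamma(C_0(X),\CC)$, which is exactly the functoriality of the Roe/Paschke model of equivariant $K$-homology under coarse maps. The witnessing homotopy $h$ is the standard one built from $V$: using that $V$ covers $f$ in the $D^*$-sense, so that conjugation by $V$ intertwines $\varphi_Y\circ f$ and $\varphi_X$ modulo $C^*(X)^\Gamma$, one identifies $(L^2(X),\varphi_X,2VpV^*-1)$, up to operator homotopy, with the push via the isometry $V$ of $(L^2(Y),\varphi_Y\circ f,2p-1)$ together with a summand supported on $(\mathrm{Im}\,V)^\perp$ with operator $-1$ which is degenerate after a further operator homotopy; discarding that summand gives $h$. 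Second, I apply the descent functor $j^\Gamma$ followed by $E_X\otimes_{C_0(X)\rtimes\Gamma}(-)$ to $h$, transporting it to a homotopy of the $C^*_r(\Gamma)$-legs; the naturality of $j^\Gamma$ and of the imprimitivity bimodules $E_X$ with respect to $f$ — the same naturality that yields $\mu^\Gamma_Y=\mu^\Gamma_X\circ f_*$ — is precisely what generates the auxiliary path of Definition \ref{Sfunc}, and since at every stage the operator involved is a symmetry, the requirement that the $C^*_r(\Gamma)$-path end at an invertible element is automatic, so all these paths may be taken constant where there is freedom. Reconciling the two $C^*_r(\Gamma)$-paths then reduces to standard homotopy manipulations in the mapping-cone construction (a path followed by a constant is inessential relative to its invertible endpoints, and reparametrisations are allowed). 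Splicing the two stages yields a homotopy of $\Gamma$-equivariant analytic structure cycles on $X$, hence the equality $\alpha(f_*[p])=f_*(\alpha[p])$ in $S^\Gamma_{j-1}(X)$; the odd case is identical after suspension, or may be run directly with self-adjoint unitaries in place of projections.

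The step I expect to be the main obstacle is not the first stage — functoriality of the Roe-algebra description of $K$-homology under covering isometries is by now routine — but the bookkeeping in the second stage: verifying that applying $j^\Gamma$ and $E_X\otimes_{C_0(X)\rtimes\Gamma}(-)$ to the covering-isometry homotopy $h$ reproduces, with the correct orientation and parametrisation, exactly the path put into Definition \ref{Sfunc}, and that the $(\mathrm{Im}\,V)^\perp$ summand is carried compatibly into a degenerate piece that can be deleted from the analytic structure cycle. Because $S^\Gamma_*$ is a Grothendieck group of homotopy classes, there is some slack here — it suffices to produce the homotopies up to further homotopy and addition of degenerate cycles — so the real content is to check that Definition \ref{Sfunc} has been set up so that the two prescriptions agree rather than differing by a nontrivial automorphism of the cone.
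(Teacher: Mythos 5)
Your proposal is correct and follows essentially the same route as the paper: both evaluate the two compositions on a projection $p$, use the range projection $Q=VV^*$ to split $\alpha([\mathrm{Ad}_V(p)])$ into a summand on $QH_X$ identified with $f_*(\alpha[p])$ via the fact that conjugation by $V$ intertwines $\varphi_Y\circ f^*$ and $\varphi_X$ modulo compacts, plus a degenerate summand on $(1-Q)H_X$ that is discarded. The paper's proof is terser on the reconciliation of the $C^*_r(\Gamma)$-paths, which you rightly flag as the main bookkeeping point, but the underlying argument is the same.
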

	\begin{proof}
		
		Let $(H_Y,\phi_Y\colon C_0(Y)\colon \BB(H_Y))$ and $(H_X,\phi_X\colon C_0(X)\colon \BB(H_X))$ the representations used to construct the algebras $D^*(Y)^\Gamma$ and $D^*(X)^\Gamma$. 
		Let $p$ be a projection over $ D^*(Y)^\Gamma$.
		Remember that $f_*[p]=[\mathrm{Ad}_V(p)]\in K_0(D^*(X)^\Gamma)$.
		Then we get two elements of $S_{1}^{\Gamma}(X)$:
		\begin{itemize}
			\item
			the first one is $\alpha([\mathrm{Ad}_V(p)])=[H_X,\phi_X,T,\mathcal{E}_X,\mathrm{id}\otimes\widetilde{\phi}_X,S]$. Here $T=2\mathrm{Ad}_V(p)-1$, $\mathcal{E}_X=E_{X}\otimes_{C_0(X)\rtimes\Gamma}H_X\rtimes \Gamma$ and $S$ is the path constantly equal to a $\widetilde{T}$-connection;
			\item the second one is $f_*(\alpha[p])=[H_Y,\phi_Y\circ f^*,U,\mathcal{E}'(t),\psi'(t),S'(t)]$. Here $U=2p-1$ 
			and $(\mathcal{E}'(t),\psi'(t),S'(t))$ is the path connecting $(E_{X}\otimes_{C_0(X)\rtimes\Gamma}H\rtimes \Gamma,\mathrm{id}\otimes(\widetilde{\phi\circ f)}, S')$ and $(E_{Y}\otimes_{C_0(Y)\rtimes\Gamma}H\rtimes \Gamma,\mathrm{id}\otimes\widetilde{\phi}, S)$, 
			where $S'$ is a $\widetilde{U}$-connection on $E_{Y}\otimes_{C_0(Y)\rtimes\Gamma}H\rtimes \Gamma$.
		\end{itemize}
		We have to prove that these two classes are the same.
		
		Consider the projection $Q=VV^*$, then we can decompose $\alpha([\mathrm{Ad}_V(p)])$ in two direct summands:
		\[\alpha([\mathrm{Ad}_V(p)])=[QH_X,\phi_X,T_1,R\mathcal{E}_X,\mathrm{id}\otimes\widetilde{\phi}_X,S_1]\oplus[(1-Q)H_X,\phi_X,T_2,(1-R)\mathcal{E}_X,\mathrm{id}\otimes\widetilde{\phi}_X,S_2],\]
		where $T_1=QTQ$, $T_2=(1-Q)T(1-Q)$, $R$ is a $\widetilde{Q}$-connection and $S_1$ and $S_2$ are defined similarly.
		
		The second summand is clearly degenerate and, since to the following diagram
		\[
		\xymatrix{C_0(X)\ar[r]^{f^*}\ar[drr]_{\varphi_X} & C_0(Y)\ar[r]^{\varphi_Y} & \mathbb{B}(H_Y)\ar[d]^{\mathrm{Ad}_V}_{\simeq}\\
			& & \mathbb{B}(QH_X)}
		\]
		commutes, modulo compacts operators, the first one
		is equal to $f_*(\alpha[p])$.
	\end{proof}

	\begin{remark}\label{Khom}
		If we define the group $\hat{K}^\Gamma_j(X)$ as in definition \ref{defcycle}, but dropping the condition of $S(1)$ being invertible, we get that the map
		\[\hat{K}^\Gamma_j(X)\ni [H,\phi,T,\mathcal{E}(t),\psi(t),S(t)]\mapsto [H,\phi,T]\in KK_\Gamma^j(C_0(X),\CC) \]
		is a group isomorphism.
		Indeed one can easily check that the kernel of this map is isomorphic to $KK_j(\CC,C^*_r(\Gamma)\otimes C_0(0,1])$, that is trivial since $C^*_r(\Gamma)\otimes C_0(0,1]$ is a cone.
		The inverse map is obviously given by 
		\[[H,\phi,T]\mapsto[H,\phi,T,\mathcal{E},\mathrm{id}\otimes\widetilde{\phi},S] \]
		where $S$ is the path constantly equal to any $\widetilde{T}$-connection.
	\end{remark}

	\begin{definition}
		Let $\Gamma$ be a discrete group, we can define the following exact sequence of groups
		\[
		\xymatrix{\dots\ar[r] & KK_*(\CC,C^*_r(\Gamma)\otimes C_0(0,1) )\ar[r]& S^\Gamma_*\ar[r]& \hat{K}^\Gamma_*\ar[r]& \dots}
		\]
		as the direct limit of
		\[\xymatrix{\dots\ar[r] & KK_*(\CC,C^*_r(\Gamma)\otimes C_0(0,1) )\ar[r]& S^\Gamma_*(X)\ar[r]& \hat{K}^\Gamma_*(X)\ar[r]& \dots}\]
		over all cocompact $\Gamma$-subspaces $X$ of $E\Gamma$.
	\end{definition}
	
	Thus we obtain the same groups defined in \cite[Definition 1.11]{PS}. This follows easily from Proposition \ref{commutativo} and Lemma \ref{coarsemorphism}.

	Let \[\xi=[H_1,\phi_1,T_1,\mathcal{E}_1(t),\psi_1(t),S_1(t)]\in S^{\Gamma_1}_j(X_1)\]
	and let \[\lambda=[H_2,\phi_2,T_2,\mathcal{E}_2(t),\psi_2(t),S_2(t)]\in \hat{K}^{\Gamma_2}_i(X_2),\] where $X_1$ and $X_2$ are two proper and cocompact spaces with respect to $\Gamma_1$ and $\Gamma_2$ respectively.
	Let $(H_1\hat{\otimes}H_2,\phi_1\hat{\otimes}\phi_2,T)$ be an exterior Kasparov product of $(H_1,\phi_1,T_1)$ and $(H_2,\phi_2,T_2)$.
	Let $(\mathcal{E}(t),\psi(t),S(t))$ be the restriction to the diagonal of the  Kasparov product of $(\mathcal{E}_1(t),\psi_1(t),S_1(t))$ and $(\mathcal{E}_2(t),\psi_2(t),S_2(t))$ (that is a Kasparov $\CC$-$A$-bimodule, where $A$ is equal to the algebra $C^*_r(\Gamma_1)\otimes C^*_r(\Gamma_2)\otimes C_0([0,1]^2\setminus\{1\}\times[0,1])$).

	%\\\\\\\\\\\\\\\\\\\\\\\\\SONO ARRIVATA FINO A QUI\\\\\\\\\\\\\\\\\\\\\\\\\\\\\\\\\\\\\\\\\\\\\\\\\\\\\\\\\\\\\\\\\\\\\\\\\\\\\\\\\\\\º

	\begin{definition}\label{product}
		We define a product
		\[
		S^{\Gamma_1}_j(X_1)\times \hat{K}^{\Gamma_2}_i(X_2)\to S^{\Gamma_1\times\Gamma_2}_{j+i}(X_1\times X_2)
		\]
		that associates to $\xi\times\lambda$ the class  \[\xi\boxtimes\lambda:=[H_1\hat{\otimes}H_2,\phi_1\hat{\otimes}\phi_2,T, \mathcal{E}(t),\psi(t),S(t)]\]
		where the entries are as described above.
		The product is compatible with homotopies in both factors and so it is well defined.
	\end{definition}
	
	\begin{remark}\label{diagrammaI}
		A similar product is defined in an obvious way on $KK^{j-1}(\CC,C^*_r(\Gamma_1)\otimes C_0(0,1))$ and $\hat{K}^{\Gamma_1}_{j}(C(X_1),\CC)$. It is natural in the sense that the
		following diagram
		\[
		\xymatrix{\cdots\ar[r] & KK^{j}(\CC,A)\times \hat{K}^{\Gamma_2}_i(X_2)\ar[r]\ar[d]& S_{j}^{\Gamma_1}(X_1)\times \hat{K}^{\Gamma_2}_i(X_2)\ar[r]\ar[d]& \hat{K}^\Gamma_{j}(X_1)\times \hat{K}^{\Gamma_2}_i(X_2)\ar[r]\ar[d]& \cdots\\
			\cdots\ar[r] & KK^{j+i}(\CC,B)\ar[r]& S_{j+i}^{\Gamma_1\times\Gamma_2}(X_1\times\widetilde{X}_2)\ar[r]& \hat{K}^{\Gamma_1\times\Gamma_2}_{j+i}(X_1\times X_2))\ar[r]& \cdots}
		\]
		is commutative.
		Here $A=C^*(\widetilde{X}_1)^{\Gamma_1}\otimes C_0(0,1)$ and $B=C^*(\widetilde{X}_1\times\widetilde{X}_2)^{\Gamma_1\times\Gamma_2}\otimes C_0(0,1)$.
	\end{remark} 
	
	\begin{lemma}\label{functprod}
		Let $Y,X,Z$ be three spaces and assume that a group $\Gamma_1$ acts properly and cocompactly on $Y$ and $X$ and $\Gamma_2$ acts properly and cocompactly on $Z$. Let $f\colon Y\to X$ be a $\Gamma$-equivariant continuous map.
		Then the following diagram
		\[
		\xymatrix{
			S_i^{\Gamma_1}(Y)\times \hat{K}_j^{\Gamma_2}(Z)\ar[r]^{f_*\times id}\ar[d]& S_i^{\Gamma_1}(X)\times \hat{K}_j^{\Gamma_2}(Z)\ar[d] \\
			S_{j+i}^{\Gamma_1\times\Gamma_2}(Y\times Z)\ar[r]^{(f\times id_Z)_*} & S_{j+i}^{\Gamma_1\times\Gamma_2}(X\times Z)
		}
		\]
		where the vertical arrows are given by \ref{product}, is commutative.
	\end{lemma}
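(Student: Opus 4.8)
The plan is to unwind both compositions in the square and reduce the statement to the associativity and functoriality properties of the Kasparov product, exactly as one does for the exterior product in ordinary $KK$-theory. Recall from Definition \ref{Sfunc} that $f_*$ acts on a cycle $[H,\phi,T,\mathcal{E}(t),\psi(t),S(t)]$ by keeping $(H,T)$, replacing $\phi$ by $\phi\circ f^*$, and by \emph{concatenating} to the path $(\mathcal{E}(t),\psi(t),S(t))$ an additional ``comparison'' path that interpolates between the Kasparov modules $E_X\otimes_{C_0(X)\rtimes\Gamma_1}H\rtimes\Gamma_1$ and $E_Y\otimes_{C_0(Y)\rtimes\Gamma_1}H\rtimes\Gamma_1$, using the functoriality $\mu^{\Gamma_1}_Y=\mu^{\Gamma_1}_X\circ f_*$ of the assembly map. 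The product of Definition \ref{product} takes $(\xi,\lambda)$, forms the exterior Kasparov product of the underlying $KK_\Gamma(C_0(\cdot),\CC)$-cycles and, on the interpolation data, the diagonal restriction of the Kasparov product of the two paths. So, tracing the upper-right route, one first applies $f_*$ to $\xi$ (producing a concatenated path over $Y$, then $X$) and then products with $\lambda$; tracing the lower-left route, one first products $\xi$ with $\lambda$ over $Y\times Z$ and then applies $(f\times\mathrm{id}_Z)_*$ (producing a concatenated path over $Y\times Z$, then $X\times Z$).

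First I would observe that on the ``$KK$-homology'' entry $(H,\phi,T)$ the two routes agree on the nose, because the exterior product commutes with the pullback of representations: $(\phi_1\circ f^*)\hat\otimes\phi_2 = (\phi_1\hat\otimes\phi_2)\circ(f\times\mathrm{id}_Z)^*$. So the whole content is the comparison of the two interpolation paths landing in $S^{\Gamma_1\times\Gamma_2}_{i+j}(X\times Z)$. Next I would set up the obvious ``product of the comparison path with $\lambda$ equals comparison path of the product'' identity: the comparison path attached to $f_*\xi$ interpolates $E_X\otimes H_1\rtimes\Gamma_1$ and $E_Y\otimes H_1\rtimes\Gamma_1$; taking its (diagonal-restricted) Kasparov product with the constant $\lambda$-data, one gets a path interpolating $(E_X\otimes H_1\rtimes\Gamma_1)\,\hat\otimes\,(E_Z\otimes H_2\rtimes\Gamma_2)$ and $(E_Y\otimes H_1\rtimes\Gamma_1)\,\hat\otimes\,(E_Z\otimes H_2\rtimes\Gamma_2)$. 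On the other side, the comparison path attached to $(f\times\mathrm{id}_Z)_*(\xi\boxtimes\lambda)$ interpolates $E_{X\times Z}\otimes(H_1\hat\otimes H_2)\rtimes(\Gamma_1\times\Gamma_2)$ and $E_{Y\times Z}\otimes(H_1\hat\otimes H_2)\rtimes(\Gamma_1\times\Gamma_2)$. The key algebraic fact is the natural isomorphism $E_{X\times Z}\otimes (H_1\hat\otimes H_2)\rtimes(\Gamma_1\times\Gamma_2)\;\cong\;(E_X\otimes H_1\rtimes\Gamma_1)\,\hat\otimes\,(E_Z\otimes H_2\rtimes\Gamma_2)$, i.e.\ that the assembly map is multiplicative for products of groups and spaces (this is standard: $\mu^{\Gamma_1\times\Gamma_2}_{X\times Z}=\mu^{\Gamma_1}_X\hat\otimes\mu^{\Gamma_2}_Z$, which follows from compatibility of the descent homomorphism with external products and from $E_{X\times Z}\cong E_X\hat\otimes E_Z$). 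Under this identification, the two comparison paths are Kasparov products of the \emph{same} data with the \emph{same} ordering, hence define the same class by the well-definedness of the product up to homotopy (already invoked in Definition \ref{product}) together with associativity of the Kasparov product.

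The remaining point is bookkeeping of the concatenations: $f_*\xi\boxtimes\lambda$ is (comparison path over $X,Y$) followed by (the product of the original $\mathcal{E}_1(t)$-path with $\mathcal{E}_2(t)$), while $(f\times\mathrm{id}_Z)_*(\xi\boxtimes\lambda)$ is (comparison path over $X\times Z,\,Y\times Z$) followed by (the product of the $\mathcal{E}_1(t)$-path with $\mathcal{E}_2(t)$). Since concatenation of paths is respected by taking Kasparov products and by the identification above, the two concatenated paths are homotopic rel endpoints (reparametrising the junction point, exactly as in the proof of Proposition \ref{commutativo}), and since homotopic paths give the same class in the Grothendieck group, the square commutes. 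I would conclude by noting that all of this goes through verbatim when $S(1)$ is required to be invertible, because invertibility is preserved under Kasparov products with the (invertible-at-the-endpoint) data of $\lambda$, so the output is a legitimate element of $S^{\Gamma_1\times\Gamma_2}_{i+j}(X\times Z)$.

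I expect the main obstacle to be purely technical rather than conceptual: making the ``multiplicativity of the assembly map'' identification $E_{X\times Z}\otimes(H_1\hat\otimes H_2)\rtimes(\Gamma_1\times\Gamma_2)\cong(E_X\otimes H_1\rtimes\Gamma_1)\hat\otimes(E_Z\otimes H_2\rtimes\Gamma_2)$ precise at the level of the explicit Hilbert modules and connections, and checking that the two comparison paths — which a priori are only asserted to exist by functoriality, not canonically chosen — can be matched up after the homotopies already built into the definition of $S^\Gamma_*$. This is the same ``the comparison path is unique up to homotopy'' issue that is swept under the rug (legitimately) in Definition \ref{Sfunc}, so once one grants that, the commutativity is a formal consequence of associativity and naturality of the Kasparov product.
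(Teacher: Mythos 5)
Your proposal is correct and its essential content coincides with the paper's proof, which consists solely of the identity $(\phi_1\otimes\phi_2)\circ (f\times \mathrm{id}_Z)^* = (\phi_1\circ f^*)\otimes \phi_2$ — exactly your first observation — with everything else declared straightforward. Your additional care with the comparison paths, the multiplicativity of the assembly map, and the concatenation bookkeeping fills in precisely the details the paper leaves implicit, and introduces no divergence in method.
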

	\begin{proof}
		This is straightforward since 
		$(\phi_1\otimes\phi_2)\circ (f\times id_Z)^*= (\phi_1\circ f^*)\otimes \phi_2$.
	\end{proof}
	\subsection{Stability of $\varrho$ classes}
	
	\subsubsection{The signature operator}
	Let $f\colon M\to N$ be a structure in $\mathcal{S}^{TOP}(N)$ and $\varrho(f)$ be the associated $\varrho$-class in $K_*(D^*(\widetilde{Z})^\Gamma)$.
	Let us see the different realisations of this class with respect to the different models of the analytical structure set.
	
	\begin{itemize}
		\item In $K_0(D^*(\widetilde{Z})^\Gamma)$  we have the element $\left[\frac{1}{2}(1+\chi(\widetilde{D}_Z+C_f))\right]$.         
		\item In $S^\Gamma_1(\widetilde{Z})$ this  element turns into  \[\left[H,\phi,F,\mathcal{E},\mathrm{id}\otimes\widetilde{\phi}, G\right],\] where $F=\left(\chi(\widetilde{D}_Z+C_f)\right)$, $\mathcal{E}=E_{Z}\otimes_{C_0(Z)\rtimes\Gamma}H\rtimes \Gamma$ and $G$ is the path constantly equal to the $\widetilde{F}$-connection used in the proof of Proposition \ref{commutativo}.
		\item Finally observe that the image of the last element 
		through the natural map $S^\Gamma_1(\widetilde{Z})\to S^\Gamma_1$ is the image of $\varrho_\Gamma\in K_0(D^*_\Gamma)$ by means of the obvious isomorphism. 
		
	\end{itemize}

	\begin{proposition}\label{prodrho}
		Let $M_1$ and $N_1$ be two $n$-dimensional Lipschitz manifolds with $n$ odd and let $M_2$ be an $m$-dimensional Lipschitz manifold with $m$ even.
		Let $M$ be $M_1\times M_2$, let $N$ be $N_1\times M_2$ and let
		$f_1\colon N_1\to M_1$ be a homotopy equivalence. Let $\Gamma_i$ be the fundamental groups of $M_i$, with $i=1,2$.
		We have that
		\[
		\varrho(f_1\times\mathrm{id}_{M_2})=\varrho(f_1)\boxtimes[D_2]\in S^{\Gamma_1\times\Gamma_2}_1(\widetilde{M}_1\times\widetilde{M}_2)
		\]
		and the same holds for $\varrho_\Gamma$.
	\end{proposition}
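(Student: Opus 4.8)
The plan is to reduce the identity to the factorisation of the perturbed signature operator recorded just before the statement. Writing $Z_1:=M_1\cup(-N_1)$, we identify $Z=M\cup(-N)$ with $Z_1\times M_2$, so that $\widetilde Z=\widetilde Z_1\times\widetilde M_2$ as a $\Gamma_1\times\Gamma_2$-space, $L^2(\widetilde Z,\Lambda_\CC)=L^2(\widetilde Z_1,\Lambda_\CC)\hat{\otimes}L^2(\widetilde M_2,\Lambda_\CC)$, and
\[
\widetilde D_Z+C_f=(\widetilde D_{Z_1}+C_{f_1})\hat{\otimes}1+1\hat{\otimes}\widetilde D_{M_2},\qquad C_f=C_{f_1}\hat{\otimes}1 .
\]
First I would write down, via the three bullet points preceding the statement, the cycle in $S^{\Gamma_1\times\Gamma_2}_1(\widetilde Z)$ representing $\varrho(f_1\times\mathrm{id}_{M_2})$ before the pushforward $\widetilde\varphi_*=(\widetilde\varphi_1\times\mathrm{id}_{\widetilde M_2})_*$ of Definition \ref{rho}, namely $\big[L^2(\widetilde Z,\Lambda_\CC),\mu,F_Z,\,L^2_{\Gamma}(\widetilde Z),\,\mathrm{id}\otimes\widetilde\mu,\,G_Z\big]$ with $F_Z=\chi(\widetilde D_Z+C_f)$ and $G_Z$ the constant path at the canonical connection for $F_Z$; and likewise the cycle on $\widetilde Z_1$ whose $\widetilde\varphi_1$-pushforward is $\varrho(f_1)$. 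The task is to identify the former with the cycle obtained from the latter and the equivariant signature class $[D_2]\in\hat K^{\Gamma_2}_*(\widetilde M_2)$ (see Remark \ref{Khom}) through the product of Definition \ref{product}.

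The analytic core is that $F_Z=\chi(\widetilde D_Z+C_f)$ is a legitimate exterior Kasparov product of $\chi(\widetilde D_{Z_1}+C_{f_1})$ and $\chi(\widetilde D_{M_2})$: this is the Baaj--Julg/Kucerovsky principle that the bounded transform of a sum $D_1\hat{\otimes}1+1\hat{\otimes}D_2$ of graded-commuting regular self-adjoint operators represents the external product of the two $KK$-classes, which holds in Hilsum's unbounded-Lipschitz setting and is unaffected by the bounded — in fact $C^*$-compact, by the Proposition following Remark \ref{pert} — perturbation $C_{f_1}\hat{\otimes}1$. Since $C_{f_1}\hat{\otimes}1$ and $1\hat{\otimes}\widetilde D_{M_2}$ graded-anticommute one has $(\widetilde D_Z+C_f)^2=(\widetilde D_{Z_1}+C_{f_1})^2\hat{\otimes}1+1\hat{\otimes}\widetilde D_{M_2}^2\ge\varepsilon^2$, with $\varepsilon>0$ the spectral gap created by the Hilsum--Skandalis perturbation (Remark \ref{pert}); hence $F_Z$ descends to an invertible operator, consistently with Remark \ref{invertibleF} and with the invertibility built into the product. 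For the mapping-cone data I would invoke that the descent homomorphism $j^\Gamma$ is compatible with Kasparov products and that the imprimitivity bimodules satisfy $E_{\widetilde Z_1\times\widetilde M_2}\cong E_{\widetilde Z_1}\otimes E_{\widetilde M_2}$, so that the restriction to the diagonal of the Kasparov product of the two constant connection paths is again the constant path at the descended connection for $F_Z$, matching $G_Z$ up at worst to a homotopy through invertibles — harmless by the well-definedness of the product. This yields $\varrho(f_1\times\mathrm{id}_{M_2})=(\widetilde\varphi_1\times\mathrm{id}_{\widetilde M_2})_*\big(c_1\boxtimes[D_2]\big)$, where $c_1$ is the $\widetilde Z_1$-cycle with $(\widetilde\varphi_1)_*c_1=\varrho(f_1)$.

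It then remains to discharge the pushforward. Applying $(\widetilde\varphi_1\times\mathrm{id}_{\widetilde M_2})_*$ and using Lemma \ref{functprod} for the $\Gamma_1$-equivariant map $\widetilde\varphi_1\colon\widetilde Z_1\to\widetilde M_1$ in the first variable gives $(\widetilde\varphi_1\times\mathrm{id})_*(c_1\boxtimes[D_2])=\big((\widetilde\varphi_1)_*c_1\big)\boxtimes[D_2]=\varrho(f_1)\boxtimes[D_2]$, as claimed (if one reads $\varrho$ in the $\widetilde Z$-model of the three bullet points, the pushforward step is simply vacuous). The assertion for $\varrho_\Gamma$ follows by one further application of naturality along the classifying map $u=u_1\times u_2$ — Lemma \ref{functprod} together with the evident functoriality of $\hat K^{\Gamma_2}_*$ — and passage to the direct limit over cocompact subspaces of $E(\Gamma_1\times\Gamma_2)=E\Gamma_1\times E\Gamma_2$. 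The main obstacle is the first half of the middle paragraph: making the Baaj--Julg/Kucerovsky identification of $\chi(\widetilde D_Z+C_f)$ with the exterior Kasparov product rigorous for Lipschitz signature operators in Hilsum's unbounded framework, and verifying it persists under the compact, gap-creating perturbation $C_{f_1}\hat{\otimes}1$; granting that, what remains is bookkeeping with descent, imprimitivity bimodules and Lemma \ref{functprod}.
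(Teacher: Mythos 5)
Your proposal is correct and follows essentially the same route as the paper: identify $Z$ with $Z_1\times M_2$, use the factorisation $\widetilde D_Z+C_f=(\widetilde D_{Z_1}+C_{f_1})\hat{\otimes}1+1\hat{\otimes}\widetilde D_{M_2}$ from Wahl/Hilsum--Skandalis, write out the explicit cycles in the $S^\Gamma_*$ model, and invoke the Baaj--Julg principle (together with the fact that two chopping functions differ by an element of $C_0(\RR)$) to identify the bounded transform of the sum with the exterior product. Your treatment is in fact slightly more careful than the paper's on the invertibility of the product operator, the compatibility of descent and imprimitivity bimodules, and the pushforward along $\widetilde\varphi$ via Lemma \ref{functprod}, all of which the paper leaves implicit.
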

	
	\begin{proof}
		Let $Z_1=M_1\cup N_1$ and $Z_2= M_1\times M_2\cup N_1\times M_2$. 
		
		The class $\varrho(f_1)$ is represented in $S^{\Gamma_1}_1(Z_1)$ by the cycle \[\left[H_1,\phi_1,F_1,\mathcal{E}_1,\mathrm{id}\otimes\widetilde{\phi}_1, G_1\right],\] where $F_1=\chi(\widetilde{D}_{Z_1}+C_{f_1})$.
		
		The class $[D_2]\in \hat{K}_1^{\Gamma_2}(M_2)$ is represented by \[\left[H_2,\phi_2,F_2,\mathcal{E}_2,\mathrm{id}\otimes\widetilde{\phi}_2, G_2\right],\] where $F_2=\psi(\widetilde{D}_{M_2})$.
		
		Finally the class $\varrho(f_1\times \mathrm{id}_{M_2})\in S^{\Gamma_1\times\Gamma_2}_1(Z_1\times M_2)$ is represented by
		\[\left[H_1\otimes H_2,\phi_1\otimes\phi_2,F,\mathcal{E}_1\otimes\mathcal{E}_2,\mathrm{id}\otimes\widetilde{\phi}_1\otimes\widetilde{\phi}_2, G\right],\] where $F=\chi(\widetilde{D}_{Z_1}\otimes1+1\otimes\widetilde{D}_{M_1}+C_{f_1\times\mathrm{id}_{M_2}})$.
		
		We have to prove the identity of the last class mentioned with the product $\varrho(f)\boxtimes[D_2]\in S^{\Gamma_1\times\Gamma_2}_1(Z_1\times M_2)$ given by
		\[
		\left[H_1\otimes H_2,\phi_1\otimes\phi_2,F',\mathcal{E}_1\otimes\mathcal{E}_2,1\otimes\widetilde{\phi}_1\otimes\widetilde{\phi}_2, G'\right],
		\]
		where $F'=\chi(\widetilde{D}_{Z_1}+C_f)\otimes1+1\otimes\psi(\widetilde{D}_{M_2})$.
		
		Since $\widetilde{D}_{Z_1}\otimes1+1\otimes\widetilde{D}_{M_1}+C_{f_1\times\mathrm{id}_{M_2}}=(\widetilde{D}_{Z_1}+C_f)\otimes1+1\otimes\widetilde{D}_{M_2}$ 
		and that $\chi$ and $\psi$  differ by a function in $C_0(\RR)$, the identity follows from 
		\cite{BJ}.

		Trivially this holds for $\varrho_\Gamma$ too.
	\end{proof}

	We would like that, after fixing a non zero K-homology class $\lambda$, under suitable assumptions the product with this element is an injective map. 
	
	To prove that, we need to define a new group that
	we will denote by $\mathcal{T}_*^{\Gamma_1,\Gamma_2}(X_1,X_2)$ (notice that the order of $X_1$ and $X_2$ is not irrelevant).
	\begin{definition}\label{triangle}
		A cycle of $\mathcal{T}_j^{\Gamma_1,\Gamma_2}(X_1,X_2)$ consists of the following data:
		\begin{itemize}
			\item a  Kasparov bimodule $(H,\phi,T)\in\mathbb{E}^{\Gamma_1\times\Gamma_2}\left(C_0(X_1)\otimes C_0(X_2),\CC\right)$;
			\item   a  Kasparov bimodule $(\mathcal{E}_s,\psi_s, S_s)\in\mathbb{E}^{\Gamma_1}\left(C_0(X_1),C^*_r(\Gamma_2)\otimes C[0,1]\right)$, where $\mathcal{E}_0$ is equal to $E_{X_2}\otimes_{C_0(X_2)\rtimes\Gamma_2}H\rtimes\Gamma_2$, $\psi_0=\mathrm{id}\otimes\widetilde{\phi}$ and $S_0$ is any $\widetilde{T}$-connection;
			\item a Kasparov bimodule $(\mathcal{E}'_{t,s},\psi'_{t,s}, S'_{t,s})\in\mathbb{E}\left(\CC,C^*_r(\Gamma_1)\otimes C^*_r(\Gamma_2)\otimes C_0(\mathcal{T})\right)$, where
			$\mathcal{T}$ is the triangle $\left\{(t,s)\in[0,1]^2\setminus\{1,1\}\,|\, t\leq s \right\}$, $\mathcal{E}'_{0,s}=E_{X_1}\otimes_{C_0(X_1)\rtimes\Gamma_1}\mathcal{E}_s\rtimes\Gamma_1$, $\psi'_{0,s}=\mathrm{id}\otimes\widetilde{\psi}$ and $S'_{0,s}$ is any $\widetilde{S}_s$-connection;
			
		\end{itemize}
		modulo homotopies of cycles, defined in a obvious way.
	\end{definition}

	\begin{remark}
		To have an intuition of what this group is, accordingly with the idea in Remark \ref{intuition}, one can think of it as the restriction to the triangle $\mathcal{T}=\left\{(t,s)\in[0,1]^2\setminus\{1,1\}\,|\, t\leq s \right\}$ of the product of the "mapping cone" $\mu^{\Gamma_1}_{X_1}$ and the "mapping cylinder" of $\mu^{\Gamma_2}_{X_2}$. This idea was used in Definition \ref{product} too.

	\end{remark}
	
	\begin{lemma}
		The group $\mathcal{T}_*^{\Gamma_1,\Gamma_2}(X_1,X_2)$ is isomorphic to
		$S^{\Gamma_1\times\Gamma_2}(X_1\times X_2)$.
	\end{lemma}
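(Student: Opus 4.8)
The plan is to construct mutually inverse homomorphisms between $\mathcal{T}_*^{\Gamma_1,\Gamma_2}(X_1,X_2)$ and $S^{\Gamma_1\times\Gamma_2}(X_1\times X_2)$ by unwinding the successive descent-and-Kasparov-product operations that define the two groups. The key observation is that a cycle of $\mathcal{T}_j^{\Gamma_1,\Gamma_2}(X_1,X_2)$ consists of exactly the same first datum as a cycle of $S_j^{\Gamma_1\times\Gamma_2}(X_1\times X_2)$, namely an equivariant bimodule $(H,\phi,T)\in\mathbb{E}^{\Gamma_1\times\Gamma_2}(C_0(X_1)\otimes C_0(X_2),\CC)$, together with a \emph{two-step} homotopy down to invertibility: first apply $\mu^{\Gamma_2}_{X_2}$ (using $E_{X_2}$ and the descent $j^{\Gamma_2}$) to land in $\mathbb{E}^{\Gamma_1}(C_0(X_1),C^*_r(\Gamma_2)\otimes C[0,1])$, then apply $\mu^{\Gamma_1}_{X_1}$ to land over $C^*_r(\Gamma_1)\otimes C^*_r(\Gamma_2)\otimes C_0(\mathcal{T})$. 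What makes the identification work is precisely the functoriality/compatibility of the Kasparov descent with exterior products, which gives a canonical isomorphism
\[
E_{X_1\times X_2}\otimes_{C_0(X_1\times X_2)\rtimes(\Gamma_1\times\Gamma_2)}H\rtimes(\Gamma_1\times\Gamma_2)\;\cong\; E_{X_1}\otimes_{C_0(X_1)\rtimes\Gamma_1}\bigl(E_{X_2}\otimes_{C_0(X_2)\rtimes\Gamma_2}H\rtimes\Gamma_2\bigr)\rtimes\Gamma_1,
\]
compatibly with the representations $\mathrm{id}\otimes\widetilde{\phi}$ on both sides. Here one uses that $C_0(X_1\times X_2)\rtimes(\Gamma_1\times\Gamma_2)\cong\bigl(C_0(X_1)\rtimes\Gamma_1\bigr)\otimes\bigl(C_0(X_2)\rtimes\Gamma_2\bigr)$ and that the imprimitivity module $E_{X_1\times X_2}$ decomposes as $E_{X_1}\otimes E_{X_2}$ under this factorization (the function $\phi$ witnessing the projection $p$ for the product space is the product of the corresponding functions for each factor).

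Concretely, I would define the map $\mathcal{T}_j^{\Gamma_1,\Gamma_2}(X_1,X_2)\to S_j^{\Gamma_1\times\Gamma_2}(X_1\times X_2)$ by keeping $(H,\phi,T)$ and reparametrizing: the triangle $\mathcal{T}=\{(t,s)\in[0,1]^2\setminus\{(1,1)\}\mid t\le s\}$ is homeomorphic, fixing the edge $\{t=s=0\}$ and respecting the blow-up of $(1,1)$, to the strip $[0,1)\times[0,1]$ in a way that sends the path datum $(\mathcal{E}'_{t,s},\psi'_{t,s},S'_{t,s})$ to a one-parameter family $(\mathcal{E}(u),\psi(u),S(u))$ over $C^*_r(\Gamma_1\times\Gamma_2)[0,1)$ with $\mathcal{E}(0)=E_{X_1\times X_2}\otimes_{\cdots}H\rtimes(\Gamma_1\times\Gamma_2)$ via the isomorphism above, with $S(0)$ a $\widetilde{T}$-connection (because iterated $\widetilde{T}$-connections compose to a $\widetilde{T}$-connection for the product, by the standard connection/product lemma in Kasparov theory), and with $S(1)$ invertible. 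The middle datum $(\mathcal{E}_s,\psi_s,S_s)$ is then recovered as the trace of $(\mathcal{E}'_{t,s},\psi'_{t,s},S'_{t,s})$ along the edge $t=0$, so no information is lost. The inverse map takes a cycle of $S^{\Gamma_1\times\Gamma_2}(X_1\times X_2)$, uses the same factorization of $E_{X_1\times X_2}$ and of the crossed product to split the single homotopy into the two-step data over the triangle; the coherence needed is exactly that the two intermediate assembly maps compose to $\mu^{\Gamma_1\times\Gamma_2}_{X_1\times X_2}$, which follows from the multiplicativity of descent and the compatibility $[E_{X_1\times X_2}]=[E_{X_1}]\boxtimes[E_{X_2}]$ under the identifications. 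I would also check that both assignments descend to homotopy classes and respect the Grothendieck-group/direct-sum structure, which is routine since everything is natural.

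The main obstacle is the careful bookkeeping of the iterated descent and imprimitivity-module identifications: one must verify that the canonical isomorphism of Hilbert modules above is genuinely $\Gamma_1\times\Gamma_2$-equivariant and intertwines the relevant representations and connections on the nose (not merely up to homotopy), and that the reparametrization of the triangle $\mathcal{T}$ onto the strip is compatible with the blow-up of the corner $(1,1)$ which encodes where both operators become invertible. A secondary technical point is that a $\widetilde{T}$-connection built by first forming a $\widetilde{T}$-connection $S_s$ over $C^*_r(\Gamma_2)\otimes C[0,1]$ and then a $\widetilde{S}_s$-connection over the further crossed product is again a $\widetilde{T}$-connection for the composite assembly — this is where one invokes the associativity of the Kasparov product together with the uniqueness (up to operator homotopy) of connections, so the resulting class is independent of the choices. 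Once these coherences are in place, showing the two maps are mutually inverse is formal, since applying one after the other returns the original cycle up to a canonical homotopy coming from the identity isomorphisms.
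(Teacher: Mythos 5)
Your proposal is correct and follows essentially the same route as the paper: the paper's map $\Phi$ simply restricts the triangle datum to the diagonal $t=s$ (the precise version of your ``reparametrization onto a strip,'' which as literally stated would yield a two-parameter rather than a one-parameter family), and its inverse $\Psi$ extends a cycle of $S^{\Gamma_1\times\Gamma_2}_*(X_1\times X_2)$ constantly in the $s$-direction, taking the middle datum to be the constant path at the $\Gamma_2$-descent of $(H,\phi,T)$. The coherences you isolate --- the identification $E_{X_1\times X_2}\otimes_{C_0(X_1\times X_2)\rtimes(\Gamma_1\times\Gamma_2)}H\rtimes(\Gamma_1\times\Gamma_2)\cong E_{X_1}\otimes_{C_0(X_1)\rtimes\Gamma_1}\bigl(E_{X_2}\otimes_{C_0(X_2)\rtimes\Gamma_2}H\rtimes\Gamma_2\bigr)\rtimes\Gamma_1$ and the composition of connections --- are exactly what the paper leaves implicit in the assertion that $\Phi$ and $\Psi$ are well defined and mutually inverse.
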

	
	\begin{proof}
		Define the homomorphism $\Phi\colon \mathcal{T}_*^{\Gamma_1,\Gamma_2}(X_1,X_2)\to S^{\Gamma_1\times\Gamma_2}(X_1\times X_2)$ 
		given by
		\[
		\left((H,\phi,T),(\mathcal{E}_s,\psi_s, S_s),(\mathcal{E}'_{t,s},\psi'_{t,s}, S'_{t,s})\right)\mapsto
		\left(H,\phi,T,\mathcal{E}'_{t,t},\psi'_{t,t}, S'_{t,t}\right).
		\]
		Define the following homomorphism
		\[\Psi\colon	\left(H,\phi,T,\mathcal{H}_{t},\alpha_{t}, U_{t}\right)\mapsto	\left((H,\phi,T),(\mathcal{E}_s,\psi_s, S_s),(\mathcal{E}'_{t,s},\psi'_{t,s}, S'_{t,s})\right) \]
		where 
		\begin{itemize}
			\item$(\mathcal{E}_s,\psi_s, S_s)$ is the path constantly equal to $(E_{X_2}\otimes_{C_0(X_2)\rtimes\Gamma_2}H\rtimes\Gamma_2, \mathrm{id}\otimes\widetilde{\phi}, S)$, with $S$ any $\widetilde{T}$-connection; 
			\item for all fixed $t\in[0,1)$,  $(\mathcal{E}'_{t,s},\psi'_{t,s}, S'_{t,s})$ is the paths constantly equal to
			$(\mathcal{H}_{t},\alpha_{t}, U_{t})$.
		\end{itemize}
		It is easy to check that the third condition in Definition \ref{triangle} is satisfied and that $\Phi$ and $\Psi$ are inverse to each other.
		
	\end{proof}	
	\begin{proposition}\label{injectivity}
		Let $\lambda$ be a class in $ \hat{K}^{\Gamma_2}_i(X_2)$.
		If there exists a class $\zeta\in KK^{-i}(C^*_r(\Gamma_2),\CC)$ such that $\mu^{\Gamma_2}_{X_2}(\lambda)\otimes_{C^*_r(\Gamma_2)}\zeta=n$ with $n\neq0$, then \[\boxtimes \lambda \colon S_i^{\Gamma_1}(X_1)\otimes\ZZ\left[\frac{1}{n}\right] 
		\to S_{i+j}^{\Gamma_1\times \Gamma_2}(X_1\times X_2)\otimes\ZZ\left[\frac{1}{n}\right] \] is injective.
		In particular if $\mu^{\Gamma_2}_{X_2}(\lambda)\otimes_{C^*_r(\Gamma_2)}\zeta=1$, then the product with $\lambda$ is honestly injective.
	\end{proposition}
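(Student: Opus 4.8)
The plan is to produce a one-sided inverse for $\boxtimes\lambda$ up to multiplication by $n$. Concretely, I would construct a homomorphism $\Theta\colon S^{\Gamma_1\times\Gamma_2}_{*}(X_1\times X_2)\to S^{\Gamma_1}_{*}(X_1)$ with $\Theta\circ(\boxtimes\lambda)=n\cdot\mathrm{id}$ on $S^{\Gamma_1}_{*}(X_1)$. Granting this, $\tfrac1n\,\Theta$ is a left inverse of $\boxtimes\lambda$ after tensoring with $\ZZ[\tfrac1n]$ (localisation is exact, so the Grothendieck groups behave well), whence $\boxtimes\lambda$ is split injective there; and when $n=1$ no localisation is needed, so the product is honestly injective. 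The idea behind $\Theta$ is to \emph{slant with $\zeta$ in the $\Gamma_2$-direction}: this is exactly the operation that collapses the $X_2$-factor while turning $\mu^{\Gamma_2}_{X_2}(\lambda)$ into the integer $n$.

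To build $\Theta$ it is convenient to work in the $\mathcal{T}$-model of Definition \ref{triangle} (equivalently, directly on cycles of $S^{\Gamma_1\times\Gamma_2}(X_1\times X_2)$ as in Definition \ref{defcycle}), and apply $\zeta$ layer by layer. To the $\Gamma_1\times\Gamma_2$-equivariant K-homology datum $(H,\phi,T)\in\mathbb{E}^{\Gamma_1\times\Gamma_2}(C_0(X_1)\otimes C_0(X_2),\CC)$ I would first apply the partial assembly over $X_2$ (the $\Gamma_2$-descent $j^{\Gamma_2}$ followed by the imprimitivity bimodule $E_{X_2}$), landing in $\mathbb{E}^{\Gamma_1}(C_0(X_1),C^*_r(\Gamma_2))$, and then take the interior Kasparov product with $\zeta$, obtaining a $\Gamma_1$-equivariant K-homology cycle $(H',\phi',T')$ for $X_1$; to the intermediate layer, valued in $C^*_r(\Gamma_2)\otimes C[0,1]$, apply $-\otimes_{C^*_r(\Gamma_2)}\zeta$; and to the bottom layer, valued in $C^*_r(\Gamma_1)\otimes C^*_r(\Gamma_2)\otimes C_0(\mathcal T)$, apply $-\otimes_{C^*_r(\Gamma_2)}(1\otimes\zeta)$. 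The resulting triple is a structure cycle for $X_1$ because the face conditions survive: the full assembly over $X_1\times X_2$ factors as assembly over $X_1$ composed with assembly over $X_2$, and the $\Gamma_1$-descent and the imprimitivity bimodule $E_{X_1}$ commute with the interior Kasparov product by the fixed class $\zeta$ (here one uses that $j^{\Gamma}$ is functorial and compatible with Kasparov products, and that $[E_{X_1}]\otimes-$ is a left product); invertibility of the endpoint operator is preserved under slanting by $\zeta$. Being assembled out of Kasparov products with fixed cycles, $\Theta$ is compatible with homotopies and with the direct limit over cocompact subspaces, hence descends to a group homomorphism.

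It then remains to compute the composite on a cycle $\xi=(H_1,\phi_1,T_1,\mathcal{E}_1(t),\psi_1(t),S_1(t))\in S^{\Gamma_1}_{*}(X_1)$. By Definition \ref{product} the top datum of $\xi\boxtimes\lambda$ is the exterior product $T_1\hat\otimes T_2$, where $T_2$ is the underlying $\Gamma_2$-equivariant K-homology class of $\lambda$ (via Remark \ref{Khom}); partial assembly over $X_2$ is multiplicative for exterior products, so it carries $T_1\hat\otimes T_2$ to $T_1\hat\otimes\mu^{\Gamma_2}_{X_2}(\lambda)$, and the subsequent product with $\zeta$ gives $T_1\hat\otimes\bigl(\mu^{\Gamma_2}_{X_2}(\lambda)\otimes_{C^*_r(\Gamma_2)}\zeta\bigr)=T_1\hat\otimes n=n\,T_1$. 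The same bookkeeping on the homotopy layers of $\xi\boxtimes\lambda$ — where, after the ``restriction to the diagonal'' of Definition \ref{product}, $\boxtimes\lambda$ amounts to an exterior Kasparov product by data assembling to $\mu^{\Gamma_2}_{X_2}(\lambda)$, which $\Theta$ then sends to $n$ — returns $n$ times the corresponding layers of $\xi$. Hence $\Theta(\xi\boxtimes\lambda)=n\,\xi$, i.e. $\Theta\circ(\boxtimes\lambda)=n\cdot\mathrm{id}$, which finishes the argument.

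The main obstacle is precisely the well-definedness claimed in the second paragraph: one must check, cycle by cycle and with explicit choices of connections, that slanting the three layers with $\zeta$ (and with $1\otimes\zeta$) turns a genuine structure cycle for $X_1\times X_2$ into a genuine structure cycle for $X_1$ — that the equalities $\mathcal{E}(0)=E_{X}\otimes H\rtimes\Gamma$ and the invertibility of $S(1)$ are respected — and that the construction is insensitive to all choices. This reduces to the compatibility of partial assembly ($\Gamma_2$-descent followed by $E_{X_2}$) with exterior Kasparov products and with the residual $\Gamma_1$-descent; these compatibilities are standard but technical, and everything else is formal. (As an alternative, one could instead invoke the naturality square of Remark \ref{diagrammaI} together with the four lemma applied after tensoring with $\ZZ[\tfrac1n]$, reducing injectivity on $S^{\Gamma_1}_{*}(X_1)$ to the injectivity of an $\otimes(1\otimes\zeta)$-retraction on the $K_*(C^*_r(\Gamma_1))$-terms and to an isomorphism statement on the K-homology terms $\hat K^{\Gamma_1}_{*}(X_1)$; but the latter seems to need the same slant-by-$\zeta$ input, so the direct construction of $\Theta$ is the cleaner route.)
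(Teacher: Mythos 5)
Your proposal is correct and follows essentially the same route as the paper: the paper's proof likewise constructs a left inverse $c_\zeta$ by passing to the $\mathcal{T}$-model of Definition \ref{triangle}, evaluating at $s=1$ (which is exactly your ``partial assembly over $X_2$'' applied layer by layer, landing in $S^{\Gamma_1}_*(X_1,C^*_r(\Gamma_2))$), and then taking the Kasparov product with $\zeta$, after which the composite with $\boxtimes\lambda$ is identified with exterior multiplication by $\mu^{\Gamma_2}_{X_2}(\lambda)\otimes_{C^*_r(\Gamma_2)}\zeta=n$. The only difference is presentational: the paper packages the partial assembly via the isomorphism $\Psi$ and $\mathrm{ev}_{s=1}$ rather than describing it directly on cycles, and it leaves the well-definedness checks you flag as routine.
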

	\begin{proof}
		To prove the Lemma we are going to build a left inverse for $\boxtimes\lambda$.
		Define the map $c_\zeta$ as the composition of the following ones:
		\begin{itemize}
			\item the isomorphism $\Psi\colon S_*^{\Gamma_1\times\Gamma_2}(X_1\times X_2)\to  \mathcal{T}_*^{\Gamma_1,\Gamma_2}(X_1,X_2)$,
			\item the evaluation at $s=1$, 
			$\mathrm{ev_{s=1}}\colon \mathcal{T}_*^{\Gamma_1,\Gamma_2}(X_1,X_2)\to S^{\Gamma_1}_*(X_1,C^*_r(\Gamma_2)) $,
			given by
			\[
			\left((H,\phi,T),(\mathcal{E}_s,\psi_s, S_s),(\mathcal{E}'_{t,s},\psi'_{t,s}, S'_{t,s})\right)\mapsto
			\left((\mathcal{E}_1,\psi_1, S_1),(\mathcal{E}'_{t,1},\psi'_{t,1}, S'_{t,1})\right)
			\]
			\item the morphism $S^{\Gamma_1}_*(X_1,C^*_r(\Gamma_2))\to S^{\Gamma_1}_{*-i}(X_1) $ given by
			\[
			(H,\phi,T,\mathcal{E}(t),\psi(t),S(t))\mapsto(H',\phi',T',\mathcal{E}'(t),\psi'(t),S'(t)),
			\]
			where  $(H',\phi',T')$ is any Kasparov product of $(H,\phi,T,\mathcal{E}(t))$ and $\zeta$, and $(\mathcal{E}'(t),\psi'(t),S'(t))$ is any Kasparov product of $(\mathcal{E}(t),\psi(t),S(t))$ and $\zeta$.
			
		\end{itemize}
		
		It is easy to check that $\mathrm{ev}_{s=1}\circ\Psi\circ\boxtimes\lambda\colon S_{i}^{\Gamma_1}(X_1)\to S_{i+j}^{\Gamma_1}(X_1,C^*_r(\Gamma_2))$ is just the exterior product with $\mu_{X_2}^{\Gamma_2}(\lambda)$.
		Then, by hypothesis, $c_\zeta(x\boxtimes\lambda)= n\cdot x$ for any $x\in S^{\Gamma_1}_i(X_1)$.
		After inverting $n$, we get an inverse for $\boxtimes\lambda$.  
	\end{proof}
	
	\begin{remark}
		The same argument fits to prove that if we fix an element $x\in \hat{K}_i^{\Gamma_2}(X_2)$ satisfying the above condition, then 
		the vertical arrows of the following diagram
		\[\tiny{
			\xymatrix{\cdots\ar[r] & KK^{j}(\CC,A)\ar[r]\ar[d]^{\boxtimes x}& S_{j}^{\Gamma_1}(X_1)\ar[r]\ar[d]^{\boxtimes x}& \hat{K}^{\Gamma_1}_1(X_1)\ar[r]\ar[d]^{\boxtimes x}& \cdots\\
				\cdots\ar[r] & KK^{j+i}(\CC, B)\otimes\ZZ\left[\frac{1}{n}\right] \ar[r]& S_{j+i}^{\Gamma_1\times\Gamma_2}(X_1\times X_2)\otimes\ZZ\left[\frac{1}{n}\right] \ar[r]& \hat{K}^{\Gamma_1\times\Gamma_2}_1(X_1\times X_2)\otimes\ZZ\left[\frac{1}{n}\right] \ar[r]& \cdots}}
		\]
		are rationally injective. Here $A=C^*(X_1)^{\Gamma_1}\otimes C_0(0,1)$ and $B=C^*(X_1\times X_2)^{\Gamma_1\times\Gamma_2}\otimes C_0(0,1)$.
	\end{remark}
	
	We can obtain the condition of Lemma \ref{injectivity} under certain hypotheses on  $\Gamma_2$:
	we impose that the group has a $\gamma$ element,
	this means that there exists a C*-algebra on which $\Gamma$ acts properly and elements
	\[
	\eta\in KK_{\Gamma}(\CC,A)\quad\mbox{and}\quad d\in KK_{\Gamma}(A,\CC),
	\]
	such that $\gamma=\eta\otimes_A d\in KK_{\Gamma}(\CC,\CC)$ satisfies $p^*\gamma=1\in KK_{\underline{E}\Gamma\rtimes\Gamma}(C_0(\underline{E}\Gamma),C_0(\underline{E}\Gamma))$, where $\underline{E}\Gamma$ is the classifying space for proper actions of $\Gamma$ and $p\colon \underline{E}\Gamma\rtimes\Gamma\to\Gamma$ is the homomorphism defined by $p(z,g)=g$.  We refer the reader to \cite{JLT1,JLT2}.
	
	The existence of the $\gamma$ element implies that the Baum-Connes assembly map (with coefficients) is split injective and that the group is K-amenable: this last property gives the existence of a non trivial element
	$\zeta\in KK(C^*_r(\Gamma_2),\CC)$
	such that, if $\xi=[L^2(\widetilde{X}_2),D]\in KK_{\Gamma_2}(\widetilde{X}_2,\CC)$ is the class given by an equivariant elliptic operator $D$, then $\mu_{\widetilde{X}_2}^{\Gamma_2}(D)\otimes_{C^*_r(\Gamma_2)}\zeta$ is equal to the Fredholm index of the induced operator on $\widetilde{X}_2/\Gamma$.
	
	\begin{corollary}\label{stab}
		Let $M_2$ be an even dimensional Lipschitz manifold with fundamental group $\Gamma_2$ 
		such that it has a $\gamma$ element and $[D_2]\in K_*(M_2)$ has non zero index.
		If $f_1\colon N_1\to M_1$ and $f'_1\colon N'_1\mapsto M_1$ are homotopy equivalences between odd dimensional Lipschitz manifolds,
		with different  $\varrho$-class invariants, then 
		\[
		[f_1\times\mathrm{id}_{M_2}]\neq[f'_1\times\mathrm{id}_{M_2}]\in \mathcal{S}^{TOP}(M_1\times M_2).
		\]
	\end{corollary}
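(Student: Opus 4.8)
The strategy is to reduce the geometric non-equality of topological structures to the non-equality of analytic $\varrho$-classes, and then to invoke Proposition~\ref{injectivity}. First I would observe that, by the very definition of the map $\varrho$ and its well-definedness on $\mathcal{S}^{TOP}$ (established in the previous sections), if $[f_1\times\mathrm{id}_{M_2}]=[f'_1\times\mathrm{id}_{M_2}]$ in $\mathcal{S}^{TOP}(M_1\times M_2)$ then necessarily $\varrho(f_1\times\mathrm{id}_{M_2})=\varrho(f'_1\times\mathrm{id}_{M_2})$ as elements of $S^{\Gamma_1\times\Gamma_2}_1(\widetilde{M}_1\times\widetilde{M}_2)$. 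So it suffices to show that these two product $\varrho$-classes are distinct.

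\textbf{Key steps.} By Proposition~\ref{prodrho} we have the product formula
\[
\varrho(f_1\times\mathrm{id}_{M_2})=\varrho(f_1)\boxtimes[D_2]
\qquad\text{and}\qquad
\varrho(f'_1\times\mathrm{id}_{M_2})=\varrho(f'_1)\boxtimes[D_2],
\]
both living in $S^{\Gamma_1\times\Gamma_2}_1(\widetilde{M}_1\times\widetilde{M}_2)$, where the first factors $\varrho(f_1),\varrho(f'_1)$ are realized in $S^{\Gamma_1}_1(\widetilde{Z}_1)$ (with $Z_1=M_1\cup -N_1$, noting $\widetilde{Z}_1$ maps $\Gamma_1$-equivariantly to $\widetilde{M}_1$). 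By hypothesis $\varrho(f_1)\neq\varrho(f'_1)$, and since $M_2$ has a $\gamma$-element, $\Gamma_2$ is K-amenable; as recalled just before the statement, K-amenability produces a class $\zeta\in KK(C^*_r(\Gamma_2),\CC)$ with $\mu^{\Gamma_2}_{\widetilde{M}_2}([D_2])\otimes_{C^*_r(\Gamma_2)}\zeta$ equal to the Fredholm index of $D_2$ on $M_2$, which is nonzero by assumption; call this integer $n\neq0$. Then Proposition~\ref{injectivity} applies with $\lambda=[D_2]$ and this $\zeta$: the map
\[
\boxtimes[D_2]\colon S^{\Gamma_1}_1(\widetilde{M}_1\text{-version})\otimes\ZZ\!\left[\tfrac1n\right]\to S^{\Gamma_1\times\Gamma_2}_1(\widetilde{M}_1\times\widetilde{M}_2)\otimes\ZZ\!\left[\tfrac1n\right]
\]
is injective. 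Hence $\varrho(f_1)\boxtimes[D_2]$ and $\varrho(f'_1)\boxtimes[D_2]$ remain distinct after inverting $n$, a fortiori distinct in $S^{\Gamma_1\times\Gamma_2}_1$, and therefore the two topological structures cannot coincide. (One should also push forward $\varrho(f_1),\varrho(f'_1)$ along $\widetilde{\varphi}_1\colon\widetilde{Z}_1\to\widetilde{M}_1$ so that the product lands in the space named in the corollary; Lemma~\ref{functprod} guarantees this push-forward is compatible with $\boxtimes[D_2]$, so the argument is unaffected.)

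\textbf{Main obstacle.} The delicate point is ensuring that the hypothesis of Proposition~\ref{injectivity} is genuinely met in this concrete geometric situation — that is, that the assembly of $[D_2]$ paired against a K-amenability class $\zeta$ really computes the honest Fredholm index of the signature operator on $M_2$, with value the signature of $M_2$ (or whatever even-dimensional index invariant is meant), and that this is nonzero. This is exactly the content of the K-amenability discussion preceding the corollary, so in the write-up it is a matter of citing \cite{JLT1,JLT2} correctly and checking that the Lipschitz signature class $[D_2]$ of Theorem~\ref{lip} slots into that framework; the remainder is bookkeeping with the product of Definition~\ref{product} and the isomorphisms of the previous subsection.
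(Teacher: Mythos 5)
Your proposal is correct and follows essentially the same route as the paper: the corollary is obtained there precisely by combining the product formula of Proposition \ref{prodrho}, the injectivity criterion of Proposition \ref{injectivity} applied with $\lambda=[D_2]$, and the preceding $\gamma$-element/K-amenability discussion supplying the class $\zeta\in KK(C^*_r(\Gamma_2),\CC)$ that pairs the assembled class $\mu^{\Gamma_2}(\,[D_2]\,)$ to the nonzero Fredholm index $n$. The only caveat --- one shared with the paper's own statement --- is that Proposition \ref{injectivity} gives injectivity only after tensoring with $\ZZ\left[\frac{1}{n}\right]$, so strictly speaking the argument requires that $\varrho(f_1)-\varrho(f'_1)$ remain nonzero in $S^{\Gamma_1}_1(\cdot)\otimes\ZZ\left[\frac{1}{n}\right]$ (compare Corollary \ref{stabdirac}, where the conclusion is correspondingly stated after inverting $k$).
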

	
	\subsubsection{Dirac operators and positive scalar curvature}\label{subsection}
	We would like to apply the methods of the previous sections to 
	get similar results about the secondary invariants described in \cite{PS}.
	
	Let us recall \cite[Definition 1.6]{PS}:
	let $(M,g)$  be a Riemannian spin manifold of dimension $n>0$,
	with fundamental group $\Gamma$.
	If $g$ has uniformly positive scalar curvature then the Dirac operator $\slashed{D}_M$ is invertible and $\chi(\widetilde{\slashed{D}}_M)$, the bounded transform of the lift of $\slashed{D}_M$ to the universal covering of $M$, defines a class $\varrho_g\in D^*(\widetilde{M})^{\Gamma} $.
	
	Thanks to that and the APS-delocalized Theorem, for $n$ odd, one obtains the following commutative diagram
	\[
	\xymatrix{\Omega^{\mathrm{spin}}_{n+1}(M)\ar[r]\ar[d]^\beta & \mathrm{R}^{\mathrm{spin}}_{n+1}(M)\ar[r]\ar[d]^{\mathrm{Ind}_\Gamma} & \mathrm{Pos}^{\mathrm{spin}}_{n}(M)\ar[r]\ar[d]^\varrho & \Omega^{\mathrm{spin}}_{n}(M)\ar[d]^\beta \\
		K_{n+1}(M)\ar[r]& K_{n+1}(C^*_r(\Gamma))\ar[r] & K_{n+1}(D^*(\widetilde{M})^\Gamma)\ar[r] & K_n(M) }
	\]
	where $M$ is a compact space with fundamental group $\Gamma$ and universal covering $\widetilde{M}$. The first row in the diagram is the Stolz exact sequence, see for instance \cite[Definition 1.39]{PS}.
	
	In the $S^\Gamma_*(M)$ picture of the analytic structure set, the class $\varrho_g$ is given by the quadruple \[[L^2(M,\slashed{S}), C(M), \slashed{D}_M, \widetilde{\chi(\slashed{D}_M)}].\] Here the last term is the constant path $\widetilde{\chi(\slashed{D}_M)}$ because the operator is invertible and there is no need to perturb it. 
	
	\begin{remark}
		If $(M,g)$ has positive scalar curvature  and $(N,h)$ is another Riemannian manifold, then for $\varepsilon>0$ small enough, $(M\times N, g\times \varepsilon h)$ has positive scalar curvature.
		Hence if $M$ admits a metric with positive scalar curvature, so does $M\times N$.
	\end{remark}
	\begin{proposition}\label{proddirac}
		Let $M$ be a spin manifold of dimension $n$  and let $g$ be a Riemannian metric with positive scalar curvature on $M$. 
		Let $N$ be a spin manifold of dimension $m$  and $h$ a Riemannian metric such that $(M\times N,g\times h)$ has positive scalar curvature.
		Then 
		\[ \varrho_g\boxtimes [\slashed{D}_h]=\varrho_{g\times h}\in S_{n+m}^{\Gamma_1\times\Gamma_2}(M\times N),\]
		where $\Gamma_1$ and $\Gamma_2$ are the fundamental groups of $M$ and $N$ respectively and $[\slashed{D}_h]$ is the class of the Dirac operator on $N$ in $K_m(N)$.
	\end{proposition}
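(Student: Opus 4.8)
The plan is to follow the proof of Proposition \ref{prodrho} line by line, replacing the Lipschitz signature operator by the spin Dirac operator and using that, for the product spin structure and the product metric, $\slashed{D}_{M\times N}=\slashed{D}_M\hat{\otimes}1+1\hat{\otimes}\slashed{D}_N$. First I would write down the three cycles. In the $S^{\Gamma_1}_*$-picture the class $\varrho_g$ is represented, as recalled just before the statement, by
\[
\left[L^2(M,\slashed{S}_M),\phi_M,\chi(\widetilde{\slashed{D}}_M),\mathcal{E}_M,\mathrm{id}\otimes\widetilde{\phi}_M,G_M\right],
\]
where, since $g$ has positive scalar curvature, $\widetilde{\slashed{D}}_M$ is invertible and $G_M$ is the \emph{constant} path equal to the $\widetilde{\chi(\slashed{D}_M)}$-connection of Proposition \ref{commutativo}; the class $[\slashed{D}_h]\in\hat{K}^{\Gamma_2}_m(N)$ is represented, in the picture of Remark \ref{Khom}, by
\[
\left[L^2(N,\slashed{S}_N),\phi_N,\psi(\widetilde{\slashed{D}}_N),\mathcal{E}_N,\mathrm{id}\otimes\widetilde{\phi}_N,G_N\right]
\]
with $G_N$ the constant path equal to some $\widetilde{\psi(\slashed{D}_N)}$-connection; and $\varrho_{g\times h}$ is represented, using invertibility of $\widetilde{\slashed{D}}_{M\times N}$, by the analogous quadruple with operator $\chi(\widetilde{\slashed{D}}_{M\times N})$ and constant path slot.

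Next I would identify these slot by slot with the product cycle of Definition \ref{product}. For the $\Gamma_1\times\Gamma_2$-equivariant $(C(M\times N),\CC)$-slot, the unbounded Kasparov product of the unbounded cycles $(C(M),\slashed{D}_M)$ and $(C(N),\slashed{D}_N)$ is, by the product formula for Dirac operators, the unbounded cycle $(C(M\times N),\slashed{D}_{M\times N})$; its bounded transform represents the Kasparov product class, and since any two chopping functions (here $\chi$ and $\psi$) differ by a function in $C_0(\RR)$, the choice is immaterial by \cite[Proposition 2.2]{BJ}. For the path (``mapping cone'') slot, the restriction to the diagonal of the Kasparov product of $G_M$ and $G_N$ is a path of $\widetilde{T}$-connections; because $\widetilde{\slashed{D}}_{M\times N}$ is invertible no genuine perturbation ever enters, so this path is homotopic through analytic structure cycles to the constant path $\widetilde{\chi(\slashed{D}_{M\times N})}$, the only remaining freedom being the choice of connection, which is killed up to homotopy. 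Assembling these identifications gives $\varrho_g\boxtimes[\slashed{D}_h]=\varrho_{g\times h}$ in $S^{\Gamma_1\times\Gamma_2}_{n+m}(M\times N)$, and pushing forward along the classifying maps produces the statement for $\varrho_\Gamma$, exactly as at the end of the proof of Proposition \ref{prodrho}.

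The one genuinely delicate point is the bookkeeping of the path slot: one must check that the diagonal-restricted exterior Kasparov product of the two constant paths stays inside the class of analytic structure cycles along the homotopy, i.e. that the $S(1)$-entry remains invertible. This is exactly where positive scalar curvature is used: invertibility of $\slashed{D}_M$ forces $\slashed{D}_M^2\hat{\otimes}1+1\hat{\otimes}\slashed{D}_N^2$ to be bounded below, so $\slashed{D}_{M\times N}$ has a spectral gap at $0$ which is preserved throughout, whence the bounded transform stays invertible — for this reason the hypothesis that $g\times h$ has positive scalar curvature is more than enough. Everything else is formal manipulation of Kasparov products, compatible with homotopies in both variables by the last sentence of Definition \ref{product}, and goes through unchanged in the Lipschitz context as well.
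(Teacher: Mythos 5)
Your proposal is correct and follows essentially the same route as the paper: the paper's own proof simply says to argue as in Proposition \ref{prodrho}, noting that the argument is even easier here because positive scalar curvature makes $\widetilde{\slashed{D}}_M$ (hence $\widetilde{\slashed{D}}_{M\times N}$) invertible, so the path slot is constant and no trivializing perturbation is needed (citing Siegel's thesis for this simpler version). Your more detailed bookkeeping of the cycle slots, the use of \cite[Proposition 2.2]{BJ} to handle the change of chopping function, and the spectral-gap observation are exactly the points the paper's terse proof is implicitly relying on.
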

	\begin{proof}
		We can prove the result as in \ref{prodrho}.
		Moreover since the class $\varrho_g$ is represented by a quadruple whose last term is the constant path $\widetilde{\chi(\slashed{D}_h)}$, it turns out that we can prove it in an easier way (see for instance \cite[Proposition 6.2.13]{Sieg}).
	\end{proof}

	\begin{corollary}\label{stabdirac}
		Let $M$ be a spin manifold of odd dimension $n$  with fundamental group $\Gamma_1$ and let  $g_1$ and $g_2$ be two Riemannian metrics with positive scalar curvature on $M$ such that $\varrho_{g_1}\neq \varrho_{g_2}\in S_n^\Gamma(M)$. 
		Let $(N,h)$ be a Riemannian spin manifold of even dimension $m$  with fundamental group $\Gamma_2$, such that the index of $[\slashed{D}_N]\in K_m(N)$ is $k\neq0$, $\Gamma_2$ has a $\gamma$ element and $g_i\times h$ has positive scalar curvature on $M\times N$, for $i=1,2$.

		Then \[[g_1\times h]\neq[g_2\times h]\in \mathrm{Pos}^{\mathrm{spin}}_{n+m}(M\times N)\otimes\ZZ\left[\frac{1}{k}\right] .\]
	\end{corollary}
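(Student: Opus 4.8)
The plan is to run the argument of Corollary~\ref{stab} with the positive-scalar-curvature $\varrho$-class in place of the signature one and Proposition~\ref{proddirac} in place of Proposition~\ref{prodrho}; the statement is then a formal consequence of Proposition~\ref{proddirac}, Proposition~\ref{injectivity} and the $\gamma$-element discussion preceding Corollary~\ref{stab}. First I would unwind the hypothesis on $\Gamma_2$: having a $\gamma$ element makes $\Gamma_2$ $K$-amenable, so (as recalled before Corollary~\ref{stab}) there is a class $\zeta\in KK(C^*_r(\Gamma_2),\CC)$, of degree $-m$ since $m$ is even, such that for the Dirac class $[\slashed{D}_N]\in\hat{K}^{\Gamma_2}_m(\widetilde{N})\cong K_m(N)$ one has
\[
\mu^{\Gamma_2}_{\widetilde{N}}([\slashed{D}_N])\otimes_{C^*_r(\Gamma_2)}\zeta\;=\;\mathrm{ind}(\slashed{D}_N)\;=\;k\;\neq\;0 .
\]
This is exactly the hypothesis of Proposition~\ref{injectivity} with $\lambda=[\slashed{D}_N]$, $X_2=\widetilde{N}$ and $n=k$, so that the product map
\[
\boxtimes[\slashed{D}_N]\colon S^{\Gamma_1}_n(\widetilde{M})\otimes\ZZ\left[\tfrac1k\right]\longrightarrow S^{\Gamma_1\times\Gamma_2}_{n+m}(\widetilde{M}\times\widetilde{N})\otimes\ZZ\left[\tfrac1k\right]
\]
is injective; concretely, the map $c_\zeta$ produced in that proof already satisfies $c_\zeta\big(x\boxtimes[\slashed{D}_N]\big)=k\cdot x$ before any localization.

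I would then argue by contraposition. Suppose $[g_1\times h]=[g_2\times h]$ in $\mathrm{Pos}^{\mathrm{spin}}_{n+m}(M\times N)\otimes\ZZ\left[\tfrac1k\right]$. Applying the map $\varrho$ of the Stolz exact sequence (which, being additive, extends to the localization), then Proposition~\ref{proddirac} together with $\widetilde{M\times N}=\widetilde{M}\times\widetilde{N}$ and $\pi_1(M\times N)=\Gamma_1\times\Gamma_2$, one obtains
\[
\varrho_{g_1}\boxtimes[\slashed{D}_N]\;=\;\varrho_{g_1\times h}\;=\;\varrho_{g_2\times h}\;=\;\varrho_{g_2}\boxtimes[\slashed{D}_N]\quad\text{in }\;S^{\Gamma_1\times\Gamma_2}_{n+m}(\widetilde{M}\times\widetilde{N})\otimes\ZZ\left[\tfrac1k\right].
\]
By the injectivity just recalled this forces $\varrho_{g_1}=\varrho_{g_2}$ in $S^{\Gamma_1}_n(\widetilde{M})\otimes\ZZ\left[\tfrac1k\right]\cong K_{n+1}(D^*(\widetilde{M})^{\Gamma_1})\otimes\ZZ\left[\tfrac1k\right]$, contradicting the hypothesis $\varrho_{g_1}\neq\varrho_{g_2}$ (read, consistently with the statement, after inverting $k$; when $k=\pm1$ Proposition~\ref{injectivity} gives honest injectivity and no localization is needed).

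The step I expect to need the most care is bookkeeping rather than analysis: keeping track of the factor $k$ and the localization at $k$, and verifying that $\varrho$ on $\mathrm{Pos}^{\mathrm{spin}}$ is genuinely additive so that one may apply it to an equality in $\mathrm{Pos}^{\mathrm{spin}}_{n+m}(M\times N)\otimes\ZZ\left[\tfrac1k\right]$ --- both already available from \cite{PS} and stable under passage to the Lipschitz framework used here. The substantive ingredients --- the product formula of Proposition~\ref{proddirac} (which is in fact easier than Proposition~\ref{prodrho}, since the Dirac operators are already invertible and no trivializing perturbation enters) and the construction of the one-sided inverse $c_\zeta$ in Proposition~\ref{injectivity} --- have already been carried out, so there is no new obstacle here beyond assembling them.
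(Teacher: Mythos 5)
Your proposal is correct and follows essentially the same route as the paper, whose own proof is just the one-line remark that the arguments of Proposition~\ref{injectivity} (combined with the product formula of Proposition~\ref{proddirac} and the $\gamma$-element discussion) give the result immediately; you have simply spelled out the contraposition that the paper leaves implicit. Your added caveat that the hypothesis $\varrho_{g_1}\neq\varrho_{g_2}$ must be read after inverting $k$ for the argument to close is a fair and careful reading of a point the paper glosses over.
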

	\begin{proof}
		We can use the arguments we used for Lemma
		\ref{injectivity} to  obtain immediately the result. 
	\end{proof}

	\subsection{The delocalized APS index Theorem in the odd-dimensional case}

	Another application of the product formula is the proof of the delocalized APS index theorem for odd dimensional cobordisms.
	
	We will do it for the perturbed signature operator, the theorem for the Dirac operator on a spin manifold with positive scalar curvature is completely analogous.
	
	Because of motivations well explained in \cite[Remark 4.6]{PS2}, we will prove the theorem at the cost of inverting $2$.
	We recall that here and in \cite{PS2} the signature operator on an odd dimensional manifold is not the odd signature operator of Atiyah, Patodi and Singer, but the direct sum of two (unitarily equivalent) versions of this operator.
	
	Since in the statement of the delocalized APS index theorem in the odd dimensional case we will compare the $\varrho$ invariant of the boundary with the index of the APS odd signature operator on the cobordism, it is worth to specify the notation we shall follow: on an odd dimensional manifold we denote by $D^{\mathrm{APS}}$ the odd signature operator of Atiyah, Patodi and Singer and we denote by $D$ the odd signature operator that we used so far.
	
	The strategy of the proof is to reduce the odd dimensional case to the even dimensional one through the product by the K-homology class of the signature operator on the circle.
	Then it is useful to review the behavior of the signature operator with respect to cartesian products of manifolds. For a detailed treatment we refer the reader to sections $5$ and $6$ of \cite{Wprod}.

	Let $W$ be an $n$-dimensional manifold with boundary $\partial W$ endowed with a cocompact free $\Gamma$-action. We assume that $n$ is odd and that the boundary of $W$ is composed by a pair of homotopy equivalent manifolds.  Let $j\colon \partial W\hookrightarrow W$ and $j'\colon \partial W\times\RR\hookrightarrow W\times\RR$ be the obvious inclusions.
	Let us recall some useful facts:
	\begin{itemize}
		\item the even signature operator $D_{W\times S^1}$ is equivalent to the direct sum of two copies of the exterior product
		$D^{APS}_{W}\hat{\otimes}1+1\hat{\otimes}D^{APS}_{S^1}$, see \cite[Section 6.3]{Wprod}. Since $D_{S^1}$ is the sum of two equivalent versions of $D_{S^1}^{APS}$, one has that $D_{W\times S^1}$ is equivalent to 
		$D^{APS}_{W}\hat{\otimes}1+1\hat{\otimes}D_{S^1}$.
		Consequently the higher index of $(D_{W\times S^1}+C_{F\times\mathrm{id}}^{\mathrm{cyl}})^+$ is equal to the
		class given by the product $\frac{1}{2} [\mathrm{Ind}_\Gamma(D^{APS}_W+C_{F}^{\mathrm{cyl}})]\boxtimes [D_{S^1}]$, where here $\boxtimes\colon K_{i}(C^*_r(\Gamma))\times K_j(S^1)\to K_{i+j}(C^*_r(\Gamma\times\ZZ))$;
		\item the operator $D^{APS}_{\partial W\times S^1}$ is equivalent to the exterior product of the even dimensional signature operator $D_{\partial W}$ and the odd dimensional signature operator $D_{S^1}^{APS}$, see \cite[Section 6.1]{Wprod}. Thus we obtain that the odd dimensional operator 
		$D_{\partial W\times S^1}$ is equivalent to  the exterior product of the even dimensional signature operator $D_{\partial W}$ and the odd dimensional signature operator $D_{S^1}$.
		In particular this means that $\varrho(D_{\partial W}+ C_{F_{\partial}})\boxtimes [D_{S^1}]$ is equal to $\varrho(D_{\partial W\times S^1}+C_{F_\partial\times\mathrm{id}})$, where here $\boxtimes\colon S^\Gamma_i(\widetilde{W})\times K_j(S^1)\to S^{\Gamma\times\ZZ}_{i+j}(\widetilde{W}\times\RR)$.
	\end{itemize}
	
	\begin{remark}
		Notice that, since $D^{APS}_{S^1}$ is nothing else than the Dirac operator on the circle and since $D_{S^1}$ is unitarily equivalent to two copies of  $D^{APS}_{S^1}$, its index is two times the generator of $C^*_r(\ZZ)$. Now
		$KK(C^*_r(\ZZ),\CC)\cong KK(C(S^1),\CC)$ by Fourier transform and
		$KK(C(S^1),\CC)\cong \mathrm{Hom}(K_0(C(S^1)),\ZZ)$, by \cite[Theorem 7.5.5]{HRk} for instance.
		So choosing any homomorphism from $K_0(C(S^1))$ to $\ZZ$ that sends the index of $D^{APS}_{S^1}$ to $1$, we obtain a class 
		$\zeta\in KK(C^*_r(\ZZ),\CC)$ that satisfies the assumptions of Lemma \ref{injectivity}, with $n=2$.
		
	\end{remark}

	\begin{theorem}
		If $\,i\colon C^*(\widetilde{W})^\Gamma\hookrightarrow D^*(\widetilde{W})^\Gamma$ is the inclusion and
		$j_*\colon D^*(\partial\widetilde{ W})^\Gamma\to D^*(\widetilde{W})^\Gamma$ is the map induced by the inclusion 
		$j\colon\partial\widetilde{ W}\hookrightarrow \widetilde{W}$,
		we have
		\[
		i_*\left(\frac{1}{2}\mathrm{Ind}_\Gamma(D^{\mathrm{APS}}_W+C_{F}^{\mathrm{cyl}})\right)=j_*(\varrho(D_{\partial W}+C_{F_\partial}))\in K_0(D^*(\widetilde{W})^\Gamma)\otimes\ZZ\left[\frac{1}{2}\right],
		\]
		where $\frac{1}{2}\mathrm{Ind}_\Gamma(D^{APS}_W+C_{F}^{\mathrm{cyl}})\in K_0(C^*(\widetilde{W})^\Gamma)\otimes\ZZ\left[\frac{1}{2}\right]$.
	\end{theorem}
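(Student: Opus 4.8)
The plan is to reduce the statement to the even dimensional delocalized APS index theorem (Theorem \ref{APSdeloc}) by crossing the whole picture with the circle $S^1$, and then to cancel the resulting factor $[D_{S^1}]$ using that multiplication by it is injective after inverting $2$ (Proposition \ref{injectivity}).

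First I would apply Theorem \ref{APSdeloc} to the even dimensional cobordism $W\times S^1$, which has fundamental group $\Gamma\times\ZZ$, boundary $\partial W\times S^1$, and the cylindrical perturbation $C_{F\times\mathrm{id}}^{\mathrm{cyl}}$ of the even signature operator $D_{W\times S^1}$. Its conclusion reads
\[
i_*\big(\mathrm{Ind}_{\Gamma\times\ZZ}(D_{W\times S^1}+C_{F\times\mathrm{id}}^{\mathrm{cyl}})\big)=(j\times\mathrm{id})_*\big(\varrho(D_{\partial W\times S^1}+C_{F_\partial\times\mathrm{id}})\big)\in K_0(D^*(\widetilde W\times\RR)^{\Gamma\times\ZZ}).
\]
Now I would rewrite each side with the two product formulas recalled just before the statement: the first identifies $\mathrm{Ind}_{\Gamma\times\ZZ}(D_{W\times S^1}+C_{F\times\mathrm{id}}^{\mathrm{cyl}})$ with $\tfrac12\,\mathrm{Ind}_\Gamma(D^{\mathrm{APS}}_W+C_{F}^{\mathrm{cyl}})\boxtimes[D_{S^1}]$, and the second identifies $\varrho(D_{\partial W\times S^1}+C_{F_\partial\times\mathrm{id}})$ with $\varrho(D_{\partial W}+C_{F_\partial})\boxtimes[D_{S^1}]$, so that the displayed equality becomes
\[
i_*\!\left(\tfrac12\,\mathrm{Ind}_\Gamma(D^{\mathrm{APS}}_W+C_{F}^{\mathrm{cyl}})\boxtimes[D_{S^1}]\right)=(j\times\mathrm{id})_*\!\left(\varrho(D_{\partial W}+C_{F_\partial})\boxtimes[D_{S^1}]\right).
\]

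Next I would pull the factor $\boxtimes[D_{S^1}]$ out of both maps. For the boundary inclusion this is an instance of Lemma \ref{functprod}, taking the second space to be $\RR$ with its translation $\ZZ$-action and $f=j\colon\partial\widetilde W\hookrightarrow\widetilde W$, which gives $(j\times\mathrm{id})_*\big(\varrho(D_{\partial W}+C_{F_\partial})\boxtimes[D_{S^1}]\big)=\big(j_*\varrho(D_{\partial W}+C_{F_\partial})\big)\boxtimes[D_{S^1}]$. For the inclusion $i\colon C^*(\widetilde W)^\Gamma\hookrightarrow D^*(\widetilde W)^\Gamma$ it is the commutativity of the left-hand square of the diagram in Remark \ref{diagrammaI}, transported through the isomorphisms $K_*(C^*(\widetilde W)^\Gamma)\cong K_*(C^*_r(\Gamma))$ and $K_*(D^*(\widetilde W)^\Gamma)\cong S^\Gamma_{*-1}(\widetilde W)$ of Proposition \ref{commutativo}, which gives $i_*(a\boxtimes[D_{S^1}])=(i_*a)\boxtimes[D_{S^1}]$. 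Combining these yields
\[
\left(i_*\tfrac12\,\mathrm{Ind}_\Gamma(D^{\mathrm{APS}}_W+C_{F}^{\mathrm{cyl}})\right)\boxtimes[D_{S^1}]=\left(j_*\varrho(D_{\partial W}+C_{F_\partial})\right)\boxtimes[D_{S^1}].
\]
Finally, the remark immediately preceding the statement exhibits a class $\zeta\in KK(C^*_r(\ZZ),\CC)$ satisfying the hypothesis of Proposition \ref{injectivity} with $n=2$, so $\boxtimes[D_{S^1}]$ is injective on the relevant analytic structure group after inverting $2$; cancelling it gives the asserted identity in $K_0(D^*(\widetilde W)^\Gamma)\otimes\ZZ[\tfrac12]$.

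I expect the main obstacle to be this middle step, namely reconciling the two a priori different incarnations of the product — the exterior $K$-theory product $\boxtimes$ of $C^*$-algebras that enters $\mathrm{Ind}_{\Gamma\times\ZZ}$ and the product of Definition \ref{product} on the analytic structure groups — and checking that they are intertwined by $i_*$ and by $j_*$ compatibly with the identifications of Proposition \ref{commutativo}. Once this naturality, essentially a bookkeeping exercise resting on Lemmas \ref{coarsemorphism} and \ref{functprod} and Remark \ref{diagrammaI}, is in place, the rest is formal: the behaviour of the signature operator under products and the even dimensional theorem have been quoted, and only the cancellation of $[D_{S^1}]$ remains, which is Proposition \ref{injectivity}.
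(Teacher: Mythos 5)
Your proposal is correct and follows essentially the same route as the paper: reduce to the even dimensional Theorem \ref{APSdeloc} for $W\times S^1$, identify both sides via the product formulas with $[D_{S^1}]$, commute the product past $i_*$ and $j_*$ using Remark \ref{diagrammaI} and Lemma \ref{functprod}, and cancel $[D_{S^1}]$ by the injectivity of Proposition \ref{injectivity} after inverting $2$. The only cosmetic difference is the direction of the argument (you deduce the odd case from the even one, while the paper phrases it as an equivalence), so there is nothing substantive to add.
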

	
	\begin{proof}
		Let $W$ be as above. Because of Proposition \ref{commutativo} and Lemma \ref{coarsemorphism} we will prove the theorem in the $S^\Gamma_{*}(\cdot)$ setting.
		
		Let $\Pi_D\colon S^\Gamma_{0}(\widetilde{W})\otimes\ZZ\left[\frac{1}{2}\right]\to S^{\Gamma\times \ZZ}_{0}(\widetilde{W}\times\RR)\otimes\ZZ\left[\frac{1}{2}\right] $ and $\Pi_C\colon K_1(C^*(\widetilde{W})^\Gamma)\otimes\ZZ\left[\frac{1}{2}\right]\to K_1(C^*( \widetilde{W}\times\RR)^{\ZZ\times\Gamma})\otimes\ZZ\left[\frac{1}{2}\right] $ be the morphism induced by the product with the of class of the signature operator $D_{S^1}$ in $K_1(S^1)$.
		By Lemma \ref{injectivity}, we have that 
		\begin{equation}\label{apsodd}
			i_*\left(\frac{1}{2}\mathrm{Ind}_\Gamma(D^{APS}_W+C_{F}^{\mathrm{cyl}})\right)=j_S(\varrho(D_{\partial W}+C_{F_\partial}))\end{equation}
		holds if and only if 
		\[ \Pi_D\left(i_*\left(\frac{1}{2}\mathrm{Ind}_\Gamma(D^{APS}_W+C_{F}^{\mathrm{cyl}})\right)\right)=\Pi_D(j_S(\varrho(D_{\partial W}+C_{F_\partial})))
		\]
		holds.

		But by Remark \ref{diagrammaI} it turns out that 
		\[
		\Pi_D\left(i_*\left(\frac{1}{2}\mathrm{Ind}_\Gamma(D^{APS}_W+C_{F}^{\mathrm{cyl}})\right)\right)=
		i_*\left(\Pi_C\left(\frac{1}{2}\mathrm{Ind}_\Gamma(D_W^{APS}+C_{F}^{\mathrm{cyl}})\right)\right)
		\]
		and, and by \cite[Section 6.3]{Wprod}, that 
		\[
		\Pi_C\left(\frac{1}{2}\mathrm{Ind}_\Gamma(D^{APS}_W+C_{F}^{\mathrm{cyl}})\right)=\mathrm{Ind}_\Gamma(D_{W\times S^1}+C_{F\times\mathrm{id}}^{\mathrm{cyl}}).
		\]
		
		Moreover by Lemma \ref{functprod} it follows that 
		\[
		\Pi_D(j_S(\varrho(D_{\partial W}+C_{F_\partial})))
		= j'_S(\Pi_D(\varrho(D_{\partial W}+C_{F_\partial})))
		\]
		and, by Proposition \ref{proddirac}, that 
		\[
		\Pi_D\left(\varrho(D_{\partial W}+C_{F_\partial})\right)=
		\varrho(D_{\partial W\times S^1}+C_{F_\partial\times\mathrm{id}}).
		\]

		Thus we have that
		\eqref{apsodd}
		holds if and only if
		\[ i_*\left(\mathrm{Ind}_\Gamma(D_{W\times S^1}^{APS}+C_{F\times\mathrm{id}}^{\mathrm{cyl}})\right)=j'_S\left(\varrho(D_{\partial W\times S^1}+C_{F_\partial\times\mathrm{id}})\right)
		\]
		holds. But, since $W\times S^1$ is even dimensional, the equality  on the right-hand side holds by \ref{APSdeloc} and the Theorem is proved.
	\end{proof}
	
	If $W$ is a Spin Riemannian manifold with boundary, such that the metric on the boundary has positive scalar curvature, then we can state the analogous theorem for the $\varrho$ invariants associated to Dirac operators.
	
	\begin{theorem}\label{apspsc}
		If $\,i\colon C^*(\widetilde{W})^\Gamma\hookrightarrow D^*(\widetilde{W})^\Gamma$ is the inclusion and
		$j_*\colon D^*(\partial\widetilde{ W})^\Gamma\to D^*(\widetilde{W})^\Gamma$ is the map induced by the inclusion 
		$j\colon\partial\widetilde{ W}\hookrightarrow \widetilde{W}$,
		we have
		\[
		i_*(\mathrm{Ind}_\Gamma(\slashed {D}_W))=j_*(\varrho(\slashed {D}_{\partial W}))\in K_0(D^*(\widetilde{W})^\Gamma).
		\]
	\end{theorem}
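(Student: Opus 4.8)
The plan is to repeat, almost verbatim, the argument used for the preceding (signature) theorem, replacing the odd signature operator on $S^1$ by the spin Dirac operator $\slashed{D}_{S^1}$. The essential simplification is that $\slashed{D}_{S^1}$ is a single copy of the APS odd signature operator of $S^1$, not two copies, so its assembled class is a \emph{generator} of $K_*(C^*_r(\ZZ))$; consequently no inversion of $2$ is needed and the identity holds on the nose in $K_0(D^*(\widetilde{W})^\Gamma)$.

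First I would transport the whole statement into the $S^\Gamma_*(\cdot)$ picture of the analytic structure set, using Proposition \ref{commutativo} and Lemma \ref{coarsemorphism}. Since the metric on $\partial W$ has positive scalar curvature, $\slashed{D}_{\partial W}$ is invertible and $\varrho(\slashed{D}_{\partial W})$ is represented by the quadruple with constant last path described in Subsection \ref{subsection}; the cylindrical-end operator $\slashed{D}_W$ has a well-defined higher index $\mathrm{Ind}_\Gamma(\slashed{D}_W)$, exactly as in the even-dimensional case.

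Next I would introduce the product-by-$[\slashed{D}_{S^1}]$ homomorphisms $\Pi_D$ on $S^\Gamma_*(\widetilde{W})$ and $\Pi_C$ on $K_*(C^*(\widetilde{W})^\Gamma)$, exactly as in the proof of the preceding theorem. Because the index of $\slashed{D}_{S^1}$ generates $K_*(C^*_r(\ZZ))$, there is a class $\zeta\in KK(C^*_r(\ZZ),\CC)$ with $\mu^{\ZZ}_{S^1}([\slashed{D}_{S^1}])\otimes_{C^*_r(\ZZ)}\zeta=1$, so by Proposition \ref{injectivity} the map $\Pi_D=\boxtimes[\slashed{D}_{S^1}]$ is injective, and it suffices to verify the identity after applying $\Pi_D$. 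On the $\varrho$-side, Lemma \ref{functprod} gives $\Pi_D(j_*(\varrho(\slashed{D}_{\partial W})))=j'_*(\Pi_D(\varrho(\slashed{D}_{\partial W})))$, where $j'\colon\partial\widetilde{W}\times\RR\hookrightarrow\widetilde{W}\times\RR$, and Proposition \ref{proddirac} gives $\Pi_D(\varrho(\slashed{D}_{\partial W}))=\varrho(\slashed{D}_{\partial W\times S^1})$. On the index side, Remark \ref{diagrammaI} gives $\Pi_D(i_*(\mathrm{Ind}_\Gamma(\slashed{D}_W)))=i_*(\Pi_C(\mathrm{Ind}_\Gamma(\slashed{D}_W)))$, and the multiplicativity of the Dirac operator under Riemannian products, $\slashed{D}_{W\times S^1}=\slashed{D}_W\hat{\otimes}1+1\hat{\otimes}\slashed{D}_{S^1}$, gives $\Pi_C(\mathrm{Ind}_\Gamma(\slashed{D}_W))=\mathrm{Ind}_\Gamma(\slashed{D}_{W\times S^1})$. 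Thus the image of the desired identity under $\Pi_D$ reads
\[
i_*(\mathrm{Ind}_\Gamma(\slashed{D}_{W\times S^1}))=j'_*(\varrho(\slashed{D}_{\partial W\times S^1}))\in K_*(D^*(\widetilde{W}\times\RR)^{\Gamma\times\ZZ}).
\]
Since $W\times S^1$ is even-dimensional and $\partial W\times S^1$ still carries positive scalar curvature, this is precisely the even-dimensional delocalized APS index theorem for spin Dirac operators, that is, the positive-scalar-curvature analogue of Theorem \ref{APSdeloc}, known to hold in the Lipschitz and smooth settings.

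I expect the main obstacle to be the bookkeeping of the various products: assembling Remark \ref{diagrammaI}, Lemma \ref{functprod}, Proposition \ref{proddirac} and the multiplicativity of $\slashed{D}$ under products into one commuting diagram that is compatible with both the $D^*$-picture and the $S^\Gamma_*$-picture, and checking that the index of $\slashed{D}_{S^1}$ is exactly a generator so that Proposition \ref{injectivity} applies with $n=1$. The even-dimensional delocalized APS theorem for Dirac operators is taken for granted here.
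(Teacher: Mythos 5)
Your proposal is correct and takes essentially the same route as the paper, which proves this theorem by rerunning the signature-operator argument with the spin Dirac operator on $S^1$ in place of $D_{S^1}$, observing that its index is a single generator of $K_1(C^*_r(\ZZ))$ (so Proposition \ref{injectivity} applies with $n=1$ and no inversion of $2$ is needed) and that no trivializing perturbation is required since $\slashed{D}_{\partial W}$ is already invertible. The paper leaves these details implicit; your write-up fills them in faithfully.
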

	
	Notice that in this case it is not necessary to invert $2$. Moreover the proof of the theorem is very similar to the case of the signature operator, but easier because we do not have to perturb the Dirac operator to obtain an invertible operator.

	\section{Mapping surgery to analysis: the even dimensional case}
	The extension to the odd dimensional case of the delocalized APS index theorem allows us to state the following result (with proof almost identical 
	to the the one given in the odd dimensional case).
	
	\begin{theorem}
		Let $N$ be an $n$-dimensional closed oriented topological manifold with fundamental group $\Gamma$.
		Assume that $n\geq5$ is even. Then there is a commutative diagram with exact rows
		\[
		\xymatrix{L_{n+1}(\ZZ\Gamma)\ar[r]\ar[d]^{\mathrm{Ind}_\Gamma} & \mathcal{S}^{TOP}(N)\ar[r]\ar[d]^\varrho & \mathcal{N}^{TOP}(N)\ar[r]\ar[d]^\beta & L_{n}(\ZZ\Gamma)\ar[d]^{\mathrm{Ind}_\Gamma} \\
			K_{n+1}(C^*_r(\Gamma))\otimes\ZZ\left[\frac{1}{2}\right]\ar[r] & K_{n+1}(D^*(\widetilde{N})^\Gamma)\otimes\ZZ\left[\frac{1}{2}\right]\ar[r] & K_n(N)\otimes\ZZ\left[\frac{1}{2}\right]\ar[r] & K_{n}(C^*_r(\Gamma))\otimes\ZZ\left[\frac{1}{2}\right]}
		\]
		and through the classifying map $u\colon N\to B\Gamma$ of the universal cover $\widetilde{N}$ of $N$, we have the analogous commutative diagram that
		involves the universal Higson-Roe exact sequence
		\[
		\xymatrix{L_{n+1}(\ZZ\Gamma)\ar[r]\ar[d]^{\mathrm{Ind}_\Gamma} & \mathcal{S}^{TOP}(N)\ar[r]\ar[d]^{\varrho_\Gamma} &
			\mathcal{N}^{TOP}(N)\ar[r]\ar[d]^{\beta_\Gamma} & L_{n}(\ZZ\Gamma)\ar[d]^{\mathrm{Ind}_\Gamma} \\
			K_{n+1}(C^*_r(\Gamma))\otimes\ZZ\left[\frac{1}{2}\right]\ar[r] & K_{n+1}(D^*_\Gamma)\otimes\ZZ\left[\frac{1}{2}\right]\ar[r] & K_n(B\Gamma)\otimes\ZZ\left[\frac{1}{2}\right]\ar[r] & K_{n}(C^*_r(\Gamma))\otimes\ZZ\left[\frac{1}{2}\right]}
		\]
	\end{theorem}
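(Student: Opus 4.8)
The plan is to reproduce, essentially line by line, the proof of the odd dimensional mapping theorem of Section~4, making a single substitution: where that proof invoked the delocalized APS index theorem for even dimensional manifolds with boundary (Theorem~\ref{APSdeloc}), the present proof must invoke the delocalized APS index theorem for \emph{odd} dimensional cobordisms, established in the previous section. Since that odd dimensional version holds only after inverting $2$, this is exactly what forces the coefficients $\ZZ[\tfrac12]$ on the bottom row.

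First I would set up the vertical arrows. The maps $\beta$ and $\mathrm{Ind}_\Gamma$ are defined, after Higson--Roe, Wahl and Piazza--Schick, in every dimension, and the analysis of Sections~2 and~3 shows their construction is insensitive to the smooth structure, so nothing new is needed beyond reindexing. For $\varrho$ I would apply the construction of Section~3 to the \emph{even} dimensional double $Z=M\cup -N$: fix a Lipschitz structure by Theorem~\ref{sull}, take the twisted odd signature operator $\widetilde D_Z$, perturb it by the trivializing perturbation $C_f$ (whose construction is abstract and $K$-theoretic, hence Lipschitz-valid), and read off the class of the self-adjoint unitary $\chi(\widetilde D_Z+C_f)$ in $K_{n+1}(D^*(\widetilde Z)^\Gamma)$, now an odd-degree group; pushing forward along $\widetilde\varphi$ and $u$ gives $\varrho(f)$ and $\varrho_\Gamma(f)$. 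Equivalently, as in \cite{PS2}, one may pin these classes down after inverting $2$ by the requirement $\varrho(f)\boxtimes[D_{S^1}]=\varrho(f\times\mathrm{id}_{S^1})$, using the injectivity of $\boxtimes[D_{S^1}]$ from Proposition~\ref{injectivity} (with $n=2$, since the signature operator on $S^1$ has index twice a generator). Well-definedness on $h$-cobordism classes is then obtained exactly as in Section~3.3, via the cylindrical-ends formalism of \cite{Wahl,PS2} and Bunke's relative index theorem (Theorem~\ref{bunke}), all checked to hold in the Lipschitz framework; independence of the Lipschitz structure is Proposition~\ref{dep} verbatim; and, $\ZZ[\tfrac12]$ being flat over $\ZZ$, localizing the bottom row keeps it exact and compatible with the horizontal maps.

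Then I would verify the three squares. The third commutes trivially, as in the odd case. The second commutes by Paschke duality: under $K_{n+1}(D^*(\widetilde Z)^\Gamma/C^*(\widetilde Z)^\Gamma)\cong K_n(Z)$ the class of $\chi(\widetilde D_Z+C_f)$ maps to the $K$-homology class of the signature operator of $Z$, whence the claim by functoriality of $\widetilde\varphi_*$ and $\beta(f)=f_*[D_M]-[D_N]$, as in \cite{PS2}. The first square is the one requiring the new input: for $a\in L_{n+1}(\ZZ\Gamma)$ and $[f]\in\mathcal{S}^{TOP}(N)$ one must prove
\[
i_*(\mathrm{Ind}_\Gamma(a))=\varrho(a\cdot[f])-\varrho([f])\in K_{n+1}(D^*(\widetilde Z)^\Gamma)\otimes\ZZ[\tfrac12].
\]
Following \cite[4.10]{PS2}, I would represent $a$ by a degree one normal map of pairs on a cobordism $W$, now \emph{odd} dimensional; the addition formula of \cite[7.1]{Wahl} and the algebraic $\varrho$-class identities recalled in Section~3.3 (both Lipschitz-valid) rewrite the right-hand side as $j_*(\varrho(D_{\partial W}+C_{F_\partial}))$, and the delocalized APS index theorem for the odd dimensional cobordism $W$ identifies this with $i_*(\mathrm{Ind}_\Gamma(a))$ — this being the precise point where the factor $\tfrac12$ enters. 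The universal diagram then follows by applying $u_*$ and the functoriality of the classifying map, as in the odd case.

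The step I expect to be the main obstacle is one of bookkeeping rather than of ideas: keeping the factor $2$ coherent throughout. The odd signature operator $D$ used everywhere is the direct sum of two copies of the Atiyah--Patodi--Singer odd signature operator, the odd dimensional delocalized APS theorem is phrased with $\tfrac12\mathrm{Ind}_\Gamma(D^{\mathrm{APS}}_W+C^{\mathrm{cyl}}_F)$, and one must track this $\tfrac12$ consistently through the addition formula, the $\varrho$-class manipulations, the passage between the $D^*$- and $S^\Gamma_*$-pictures (Proposition~\ref{commutativo} and Lemma~\ref{coarsemorphism}), and the product apparatus (Proposition~\ref{prodrho}, Lemma~\ref{functprod}, Remark~\ref{diagrammaI}), so that it materialises exactly as the $\ZZ[\tfrac12]$-coefficients in the statement. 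Everything else is the Section~4 argument with the reindexing $K_0\leftrightarrow K_1$ and the obvious shift in the dimension of $W$.
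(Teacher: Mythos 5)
Your proposal is correct and follows exactly the route the paper takes: Section 5's proof is precisely the remark that the argument of the odd dimensional case goes through verbatim once the delocalized APS index theorem for odd dimensional cobordisms (proved in Section 4.3 at the cost of inverting $2$) is substituted for Theorem \ref{APSdeloc} in the verification of the first square, which is what forces the $\ZZ\left[\frac{1}{2}\right]$ coefficients. Your expansion of the bookkeeping around the factor $\frac{1}{2}$ and the reindexing $K_0\leftrightarrow K_1$ is consistent with the paper's (much terser) treatment.
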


	The same is true if we consider the surgery exact sequence for smooth manifolds.

	\begin{remark}
		Thanks to Theorem \ref{apspsc}, we can enunciate the analogous statement for the Stolz sequence.
		With the same notations as in subsection \ref{subsection}, we obtain the following commutative diagram
		\[
		\xymatrix{\Omega^{\mathrm{spin}}_{n+1}(M)\ar[r]\ar[d]^\beta & \mathrm{R}^{\mathrm{spin}}_{n+1}(M)\ar[r]\ar[d]^{\mathrm{Ind}_\Gamma} & \mathrm{Pos}^{\mathrm{spin}}_{n}(M)\ar[r]\ar[d]^\varrho & \Omega^{\mathrm{spin}}_{n}(M)\ar[d]^\beta \\
			K_{n+1}(M)\ar[r]& K_{n+1}(C^*_r(\Gamma))\ar[r] & K_{n+1}(D^*(\widetilde{M})^\Gamma)\ar[r] & K_n(M) }
		\]
		with $n\geq5$ even.
	\end{remark}
	
	\begin{corollary}
		Corollaries \ref{stab} and \ref{stabdirac} are true irrespective of the dimesions of $M$ and $N$.
	\end{corollary}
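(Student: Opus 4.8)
The plan is to run the proofs of Corollaries \ref{stab} and \ref{stabdirac} essentially unchanged, the point being that the parity hypotheses they carried were there only to guarantee that the relevant product formula for $\varrho$ and the well-definedness of $\varrho$ on the structure set (respectively on the set of positive-scalar-curvature metrics) had already been set up. Both are now available in arbitrary dimension $\geq5$: Section 3 shows that $\varrho$ descends to $\mathcal{S}^{TOP}(N)$ for $N$ of any dimension, and the even-dimensional mapping-surgery-to-analysis theorem of the present section — built on the odd-dimensional delocalized APS index theorem — provides the accompanying commutative diagram (after inverting $2$ in the signature case, integrally in the Dirac/Stolz case). As for the product formula, in the signature case the proof of Proposition \ref{prodrho} rests only on \cite{BJ} and on the behaviour of bounded transforms under the graded tensor product splittings $D_Z = D_1\hat{\otimes}1 + 1\hat{\otimes}D_2$, $C_f = C_{f_1}\hat{\otimes}1$, and is therefore insensitive to the parities of the factors; indeed the identity $\varrho(D_{\partial W})\boxtimes[D_{S^1}] = \varrho(D_{\partial W\times S^1})$ was already used in precisely this more general parity configuration in the proof of the odd-dimensional delocalized APS theorem, and in the Dirac case Proposition \ref{proddirac} is already stated for all $n,m$.

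Granting this, the argument for Corollary \ref{stab} with $M_1,N_1,N_1'$ of arbitrary dimension (the new content being the even-dimensional case) runs as follows. From $\varrho(f_1)\neq\varrho(f_1')$ and the product formula one gets $\varrho(f_1\times\mathrm{id}_{M_2}) = \varrho(f_1)\boxtimes[D_2]$ and $\varrho(f_1'\times\mathrm{id}_{M_2}) = \varrho(f_1')\boxtimes[D_2]$. Since $\Gamma_2$ has a $\gamma$-element and $[D_2]$ has non-zero index $n$ — which, note, forces $M_2$ to be even-dimensional, so no restriction is actually being dropped there — Proposition \ref{injectivity} gives that $\boxtimes[D_2]$ is injective after inverting $n$; hence $\varrho(f_1\times\mathrm{id}_{M_2})\neq\varrho(f_1'\times\mathrm{id}_{M_2})$, and since $\varrho$ is a well-defined invariant on $\mathcal{S}^{TOP}(M_1\times M_2)$ this yields $[f_1\times\mathrm{id}_{M_2}]\neq[f_1'\times\mathrm{id}_{M_2}]$, with the understanding that when $M_1\times M_2$ is even-dimensional the separating invariant is read in $K_*(D^*(\widetilde{M_1\times M_2})^{\Gamma_1\times\Gamma_2})\otimes\ZZ[\tfrac{1}{2}]$ as in the even-dimensional theorem, which still distinguishes the two classes. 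The proof of Corollary \ref{stabdirac} in arbitrary dimension is word-for-word identical, with Proposition \ref{proddirac} and $[\slashed{D}_N]$ in place of Proposition \ref{prodrho} and $[D_2]$; here no localisation at $2$ intervenes because the Dirac operator is never perturbed and the even-dimensional Stolz-to-analysis diagram holds integrally.

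The only delicate point is bookkeeping these inversions: one has to make sure that ``distinct after inverting $n$'' (respectively $k$, or $2$) still forces the underlying structure-set (respectively $\mathrm{Pos}^{\mathrm{spin}}$) classes to be distinct, which is automatic since the detecting map factors through the localised group, and one has to confirm that neither the product formulas nor the well-definedness of the $\varrho$-invariants covertly used a pseudodifferential calculus or a dimension-parity assumption — but making everything abstract and insensitive to parity has been the guiding principle of the earlier sections, so this verification is routine.
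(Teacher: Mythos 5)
Your proposal is correct and follows the same route the paper (implicitly) takes: the corollary is an immediate consequence of the fact that the odd-dimensional delocalized APS index theorem of Section 5.3 makes the well-definedness of the $\varrho$-invariants and the parity-insensitive product formulas of Propositions \ref{prodrho} and \ref{proddirac} available in all dimensions $\geq 5$, after which the arguments of Corollaries \ref{stab} and \ref{stabdirac} run unchanged (with the localisation bookkeeping you note). Your observation that the nonvanishing-index hypothesis already pins down the parity of the second factor, so that the genuinely new content is relaxing the parity of the first, is consistent with the paper's intent.
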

	\addcontentsline{toc}{section}{References}
	\bibliographystyle{plain}
	\nocite{*}
	\bibliography{biblip}

\begin{thebibliography}{10}

\bibitem{AS1}
Micheal~F. Atiyah and Israel~M. Snger.
\newblock The index of elliptic operators: I.
\newblock {\em Annals of Mathematics}.

\bibitem{BJ}
Saad Baaj and Pierre Julg.
\newblock Th\'eorie bivariante de {K}asparov et op\'erateurs non born\'es dans
  les {$C^{\ast} $}-modules hilbertiens.
\newblock {\em C. R. Acad. Sci. Paris S\'er. I Math.}, 296(21):875--878, 1983.

\bibitem{Bu}
Ulrich Bunke.
\newblock A {$K$}-theoretic relative index theorem and {C}allias-type {D}irac
  operators.
\newblock {\em Math. Ann.}, 303(2):241--279, 1995.

\bibitem{HRk}
Nigel Higson and John Roe.
\newblock {\em Analytic {$K$}-homology}.
\newblock Oxford Mathematical Monographs. Oxford University Press, Oxford,
  2000.
\newblock Oxford Science Publications.

\bibitem{HigRoeI}
Nigel Higson and John Roe.
\newblock Mapping surgery to analysis. {I}. {A}nalytic signatures.
\newblock {\em $K$-Theory}, 33(4):277--299, 2005.

\bibitem{HigRoeII}
Nigel Higson and John Roe.
\newblock Mapping surgery to analysis. {II}. {G}eometric signatures.
\newblock {\em $K$-Theory}, 33(4):301--324, 2005.

\bibitem{HigRoeIII}
Nigel Higson and John Roe.
\newblock Mapping surgery to analysis. {III}. {E}xact sequences.
\newblock {\em $K$-Theory}, 33(4):325--346, 2005.

\bibitem{Hil1}
Michel Hilsum.
\newblock Signature operator on {L}ipschitz manifolds and unbounded {K}asparov
  bimodules.
\newblock In {\em Operator algebras and their connections with topology and
  ergodic theory ({B}u\c steni, 1983)}, volume 1132 of {\em Lecture Notes in
  Math.}, pages 254--288. Springer, Berlin, 1985.

\bibitem{Hil2}
Michel Hilsum.
\newblock Fonctorialit\'e en {$K$}-th\'eorie bivariante pour les vari\'et\'es
  lipschitziennes.
\newblock {\em $K$-Theory}, 3(5):401--440, 1989.

\bibitem{Hil3}
Michel Hilsum.
\newblock L'invariant {$\eta$} pour les vari\'et\'es lipschitziennes.
\newblock {\em J. Differential Geom.}, 55(1):1--41, 2000.

\bibitem{HilSk}
Michel Hilsum and Georges Skandalis.
\newblock Invariance par homotopie de la signature \`a coefficients dans un
  fibr\'e presque plat.
\newblock {\em J. Reine Angew. Math.}, 423:73--99, 1992.

\bibitem{JT}
Kjeld~Knudsen Jensen and Klaus Thomsen.
\newblock {\em Elements of {$KK$}-theory}.
\newblock Mathematics: Theory \& Applications. Birkh\"auser Boston Inc.,
  Boston, MA, 1991.

\bibitem{Ka}
Max Karoubi.
\newblock {$K$}-theory, an elementary introduction.
\newblock In {\em Cohomology of groups and algebraic {$K$}-theory}, volume~12
  of {\em Adv. Lect. Math. (ALM)}, pages 197--215. Int. Press, Somerville, MA,
  2010.

\bibitem{kasp}
G.~G. Kasparov.
\newblock Equivariant {$KK$}-theory and the {N}ovikov conjecture.
\newblock {\em Invent. Math.}, 91(1):147--201, 1988.

\bibitem{Lance}
E.~C. Lance.
\newblock {\em Hilbert {$C^*$}-modules}, volume 210 of {\em London Mathematical
  Society Lecture Note Series}.
\newblock Cambridge University Press, Cambridge, 1995.
\newblock A toolkit for operator algebraists.

\bibitem{PS3}
Paolo Piazza and Thomas Schick.
\newblock Bordism, rho-invariants and the {B}aum-{C}onnes conjecture.
\newblock {\em J. Noncommut. Geom.}, 1(1):27--111, 2007.

\bibitem{PS2}
Paolo Piazza and Thomas Schick.
\newblock The surgery exact sequence, {K}-theory and the signature operator.
\newblock {\em arXiv:1309.4370v1}, 2013.

\bibitem{PS}
Paolo Piazza and Thomas Schick.
\newblock Rho-classes, index theory and {S}tolz' positive scalar curvature
  sequence.
\newblock {\em J. Topol.}, 7(4):965--1004, 2014.

\bibitem{Roe}
John Roe.
\newblock {\em Elliptic operators, topology and asymptotic methods}, volume 179
  of {\em Pitman Research Notes in Mathematics Series}.
\newblock Longman Scientific \& Technical, Harlow; copublished in the United
  States with John Wiley \& Sons, Inc., New York, 1988.

\bibitem{RoeC}
John Roe.
\newblock Comparing analytic assembly maps.
\newblock {\em Q. J. Math.}, 53(2):241--248, 2002.

\bibitem{Sieg}
Paul Siegel.
\newblock Homological calculations with analytic structure groups.
\newblock {\em PhD thesis}, 2012.

\bibitem{SkExt}
Georges Skandalis.
\newblock On the strong {E}xt bifunctor.
\newblock {\em
  http://webusers.imj-prg.fr/{\raisebox{-.6ex}{\symbol{"7E}}}georges.skandalis/Publications/StrongExt.pdf}.

\bibitem{Stolz}
Stephan Stolz.
\newblock Positive scalar curvature metrics---existence and classification
  questions.
\newblock In {\em Proceedings of the {I}nternational {C}ongress of
  {M}athematicians, {V}ol.\ 1, 2 ({Z}\"urich, 1994)}, pages 625--636.
  Birkh\"auser, Basel, 1995.

\bibitem{Sul-Tel}
D.~Sullivan and N.~Teleman.
\newblock An analytic proof of {N}ovikov's theorem on rational {P}ontrjagin
  classes.
\newblock {\em Inst. Hautes \'Etudes Sci. Publ. Math.}, (58):79--81 (1984),
  1983.

\bibitem{Sull}
Dennis Sullivan.
\newblock Hyperbolic geometry and homeomorphisms.
\newblock In {\em Geometric topology ({P}roc. {G}eorgia {T}opology {C}onf.,
  {A}thens, {G}a., 1977)}, pages 543--555. Academic Press, New York, 1979.

\bibitem{Tel1}
Nicolae Teleman.
\newblock The index of signature operators on {L}ipschitz manifolds.
\newblock {\em Inst. Hautes \'Etudes Sci. Publ. Math.}, (58):39--78 (1984),
  1983.

\bibitem{Tel2}
Nicolae Teleman.
\newblock The index theorem for topological manifolds.
\newblock {\em Acta Math.}, 153(1-2):117--152, 1984.

\bibitem{JLT1}
Jean~Louis Tu.
\newblock La conjecture de {N}ovikov pour les feuilletages hyperboliques.
\newblock {\em $K$-Theory}, 16(2):129--184, 1999.

\bibitem{JLT2}
Jean-Louis Tu.
\newblock The gamma element for groups which admit a uniform embedding into
  {H}ilbert space.
\newblock In {\em Recent advances in operator theory, operator algebras, and
  their applications}, volume 153 of {\em Oper. Theory Adv. Appl.}, pages
  271--286. Birkh\"auser, Basel, 2005.

\bibitem{TV}
P.~Tukia and J.~V{\"a}is{\"a}l{\"a}.
\newblock Lipschitz and quasiconformal approximation and extension.
\newblock {\em Ann. Acad. Sci. Fenn. Ser. A I Math.}, 6(2):303--342 (1982),
  1981.

\bibitem{Wprod}
Charlotte Wahl.
\newblock Product formula for {A}tiyah-{P}atodi-{S}inger index classes and
  higher signatures.
\newblock {\em J. K-Theory}, 6(2):285--337, 2010.

\bibitem{Wahl}
Charlotte Wahl.
\newblock Higher {$\rho$}-invariants and the surgery structure set.
\newblock {\em J. Topol.}, 6(1):154--192, 2013.

\bibitem{WY}
Shmuel Weinberger and Guoliang Yu.
\newblock Finite part of operator {$K$}-theory for groups finitely embeddable
  into {H}ilbert space and the degree of nonrigidity of manifolds.
\newblock {\em Geom. Topol.}, 19(5):2767--2799, 2015.

\bibitem{XY2}
Zhizhang Xie and Guoliang Yu.
\newblock Higher rho invariants and the moduli space of positive scalar
  curvature metrics.
\newblock {\em arXiv:1310.1136}.

\bibitem{XY}
Zhizhang Xie and Guoliang Yu.
\newblock Positive scalar curvature, higher rho invariants and localization
  algebras.
\newblock {\em Adv. Math.}, 262:823--866, 2014.

\end{thebibliography}
	
\end{document}